\definecolor{grey}{rgb}{0.95,0.95,0.95}
\definecolor{green}{rgb}{0.2,0.6,0.4}
\newtheorem{theorem}{Theorem}[section]
\newtheorem{proposition}[theorem]{Proposition}
\newtheorem{lemma}[theorem]{Lemma}
\newtheorem{corollary}[theorem]{Corollary}
\theoremstyle{definition}
\newtheorem{definition}[theorem]{Definition}
\newtheorem{statement}[theorem]{Statement}
\newtheorem{question}[theorem]{Question}
\newtheoremstyle{principle}{}{}{\itshape}{}{\bfseries}{.}{.5em}{\thmnote{#3}#1}
\theoremstyle{principle}
\newtheorem*{principle}{}
\newcommand{\Psf}{\mathsf{P}}
\newcommand{\Nb}{\mathbb{N}}
\newcommand{\Qb}{\mathbb{Q}}
\newcommand{\Ccal}{\mathcal{C}}
\newcommand{\Fcal}{\mathcal{F}}
\newcommand{\Ncal}{\mathcal{N}}
\newcommand{\Qcal}{\mathcal{Q}}
\newcommand{\Rcal}{\mathcal{R}}
\newcommand{\Tcal}{\mathcal{T}}
\newcommand{\uh}{{\upharpoonright}}
\renewcommand{\setminus}{\smallsetminus}
\newcommand{\cond}[1]{\left\{\begin{array}{ll} #1 \end{array}\right.}
\newcommand{\s}[1]{\ensuremath{\sf{#1}}}
\DeclareMathOperator{\rca}{\s{RCA}_0}
\DeclareMathOperator{\stto}{\mathsf{STT}}
\DeclareMathOperator{\er}{\mathsf{ER}}
\DeclareMathOperator{\serp}{\mathsf{SER}^2_2}
\DeclareMathOperator{\erp}{\mathsf{ER}^2_2}
\DeclareMathOperator{\erps}{\mathsf{a-ER}^1_2}
\DeclareMathOperator{\wkl}{\s{WKL}_0}
\DeclareMathOperator{\rt}{\s{RT}}
\DeclareMathOperator{\tto}{\s{TT}}
\DeclareMathOperator{\coh}{\s{COH}}
\newcommand{\inter}{\mbox{int}}
\newcommand{\Leaf}{\operatorname{lvs}}
\definecolor{lightblue}{HTML}{e6e6e6}
\definecolor{lightred}{HTML}{eca6a6}
\definecolor{lightgreen}{RGB}{164,244,140}
\newtheoremstyle{custom}
  {10pt}
  {10pt}
  {\normalfont}
  {}
  {\bfseries}
  {}
  { }
  {}
\theoremstyle{custom}
\DeclareMathOperator{\res}{\upharpoonright}
\newcommand{\ran}{\operatorname{ran}}
\newcommand{\seq}[1]{\langle #1 \rangle}
\renewcommand{\iff}{\leftrightarrow}
\newcommand{\RCA}{\mathsf{RCA}}
\newcommand{\ACA}{\mathsf{ACA}}
\newcommand{\WKL}{\mathsf{WKL}}
\newcommand{\ATR}{\mathsf{ATR}}
\newcommand{\CA}{\textrm{-}\mathsf{CA}}
\newcommand{\RT}{\mathsf{RT}}
\newcommand{\COH}{\mathsf{COH}}
\newcommand{\CAC}{\mathsf{CAC}}
\newcommand{\ADS}{\mathsf{ADS}}
\newcommand{\SRT}{\mathsf{SRT}}
\newcommand{\WWKL}{\mathsf{WWKL}}
\newcommand{\TT}{\mathsf{TT}}
\newcommand{\AMT}{\mathsf{AMT}}
\newcommand{\OPT}{\mathsf{OPT}}
\newcommand{\CTT}{\mathsf{CTT}}
\newcommand{\STT}{\mathsf{STT}}
\begin{document}

\title{Coloring trees in reverse mathematics}

\author{Damir D. Dzhafarov}
\address{Department of Mathematics\\
University of Connecticut\\
Storrs, Connecticut U.S.A.}
\email{damir@math.uconn.edu}

\author{Ludovic Patey}
\address{Department of Mathematics\\
University of California, Berkeley\\
Berkeley, California U.S.A.}
\email{ludovic.patey@computability.fr}

\thanks{Dzhafarov was supported in part by NSF Grant DMS-1400267.}

\begin{abstract}
The tree theorem for pairs ($\TT^2_2$), first introduced by Chubb, Hirst, and McNicholl, asserts that given a finite coloring of pairs of comparable nodes in the full binary tree $2^{<\omega}$, there is a set of nodes isomorphic to $2^{<\omega}$ which is homogeneous for the coloring. This is a generalization of the more familiar Ramsey's theorem for pairs ($\RT^2_2$), which has been studied extensively in computability theory and reverse mathematics. We answer a longstanding open question about the strength of $\TT^2_2$, by showing that this principle does not imply the arithmetic comprehension axiom ($\ACA_0$) over the base system, recursive comprehension axiom ($\RCA_0$), of second-order arithmetic. In addition, we give a new and self-contained proof of a recent result of Patey that $\TT^2_2$ is strictly stronger than $\RT^2_2$. Combined, these results establish $\TT^2_2$ as the first known example of a natural combinatorial principle to occupy the interval strictly between $\ACA_0$ and $\RT^2_2$. The proof of this fact uses an extension of the bushy tree forcing method, and develops new techniques for dealing with combinatorial statements formulated on trees, rather than on $\omega$.
\end{abstract}

\maketitle

\section{Introduction}\label{S:intro}

Reverse mathematics is an area of mathematical logic devoted to classifying mathematical theorems according to their logical strength.
The setting for this endeavor is \emph{second-order arithmetic} which is a formal system strong enough to encompass (countable analogues of) most results of classical mathematics. It consists of the usual Peano axioms for the natural numbers, together with the \emph{comprehension scheme}, consisting of axioms asserting that the set of all~$x \in \mathbb{N}$ satisfying a given formula~$\varphi$ exists. Fragments of this system obtained by weakening the comprehension scheme are called \emph{subsystems of second-order arithmetic}. The logical strength of a theorem is then measured according to the weakest such subsystem in which that theorem can be proved. This is a two-step process: the first consists in actually finding such a subsystem, and the second in showing that the theorem ``reverses'', i.e.,~is in fact equivalent to this subsystem, over a fixed weak base system. One way to think about such a reversal is that it precisely captures the techniques needed to prove the given theorem. By extension, two theorems that turn out to be equivalent to the same subsystem (and hence to each other) can thus be regarded as requiring the same basic techniques to prove. The observation mentioned above, that most theorems can be classified into just a few categories, refers to the fact that most theorems are either provable in the weak base system, or are equivalent over it to one of four other subsystems.

The base system here is the \emph{recursive comprehension axiom} ($\RCA_0$), which restricts the comprehension scheme to~$\Delta^0_1$-definable sets. This system corresponds roughly to constructive mathematics, sufficing to prove the existence of all the computable sets, but not of any noncomputable ones. A considerably stronger system is the \emph{arithmetical comprehension axiom} ($\ACA_0$), which adds comprehension for sets definable by arithmetical formulas, i.e.,~formulas whose quantifiers only range over variables for numbers (as opposed to variables for sets of numbers). This system is considerably stronger than~$\WKL_0$, sufficing to solve the halting problem, i.e., the problem of determining whether a given computer program halts on a given input. Three other important systems are \emph{weak K\"{o}nig's lemma} ($\WKL_0$), \emph{arithmetical transfinite recursion} ($\ATR_0$), and the \emph{$\Pi^1_1$-comprehension axiom} ($\Pi^1_1\CA_0$). In order of increasing strength, these are arranged thus:
\[
\RCA_0 < \WKL_0 < \ACA_0 < \ATR_0 < \Pi^1_1\CA_0.
\]
We refer the reader to Simpson \cite{Simpson-1977} for a complete treatise on reverse mathematics, and to Soare \cite{Soare-2016} for general background on computability theory.

A striking observation, repeatedly demonstrated in the literature, is that most theorems investigated in this framework are either provable in the base system~$\RCA_0$, or else equivalent over~$\RCA_0$ to one of the other four subsystems listed above. It is from this empirical fact that these systems derive their commonly-used moniker, ``the big five''. The initial focus of the subject was almost exclusively on a kind of zoological classification of theorems according to which of the five categories they belong to. In the interval between~$\RCA_0$ and~$\ACA_0$, the study of which has received the overwhelming majority of attention in the literature, an early exception to this classification project was Ramsey's theorem for pairs. We recall its statement.

\begin{definition}\
	Fix~$X \subseteq \omega$ and~$n,k \geq 1$.
	\begin{enumerate}
		\item~$[X]^n$ is the set of all tuples~$\seq{x_0,\ldots,x_{n-1}} \in X^n$ with~$x_0 < \cdots < x_{n-1}$.
		\item A \emph{$k$-coloring of~$[X]^n$} is a function~$f : [X]^n \to \{0,\ldots,k-1\}$.
		\item A set~$Y \subseteq X$ is \emph{homogeneous for~$f$}, or \emph{$f$-homogeneous}, if there is a color~$c < k$ such that~$f ([Y]^n) = \{c\}$.
	\end{enumerate}
\end{definition}

\noindent We identify~$\{0,\ldots,k-1\}$ with~$k$, and so write a coloring simply as~$f : [X]^n \to k$. We also write~$f(x_0,\ldots,x_{n-1})$ in place of~$f(\seq{x_0,\ldots,x_{n-1}})$ for~$\seq{x_0,\ldots,x_{n-1}} \in [X]^n$.

\begin{statement}[Ramsey's theorem for~$n$-tuples and~$k$ colors ($\RT^n_k$)]
	For every coloring~$f : [\omega]^n \to k$, there is an infinite~$f$-homogeneous set.
\end{statement}

It is easy to see that over~$\RCA_0$,~$\RT^n_2$ is equivalent to~$\RT^n_k$ for any~$k \geq 2$, so in practice, we will usually restrict~$k$ to~$2$.\footnote{The situation is quite different in the closely related investigation of Ramsey's theorem under reducibilities stronger than provability in~$\RCA_0$, such as Weihrauch reducibility and computable reducibility. This analysis has gained much prominence recently as an extension of the traditional framework of reverse mathematics. (See Dorais et al.~\cite{DDHMS-2016} for an introduction, and Brattka \cite{Brattka-bib} for an updated bibliography.) In this setting, the number of colors does indeed make a difference. In particular, Patey \cite[Theorem 3.14]{Patey-TA}] has shown that if~$k > j \geq 2$ then~$\RT^n_k$ is not computably reducible to~$\RT^n_j$.} The effective analysis of Ramsey's theorem was initiated by Jockusch \cite{Jockusch-1972} in the 1970s. Recasting some of his results in the parlance of reverse mathematics, we see that~$\RCA_0$ proves~$\RT^1_2$, that~$\ACA_0$ proves~$\RT^n_2$ for every~$n$, and that for~$n \geq 3$,~$\RT^n_k$ and~$\ACA_0$ are in fact equivalent (see \cite[Theorem III.7.6]{Simpson-2009}). The situation for~$n = 2$ is different. Hirst \cite[Corollary 6.12]{Hirst-1987} showed that~$\RT^2_2$ is not provable in~$\WKL_0$ (see also \cite[Corollary 6.12]{Hirschfeldt-2014}), while much later, answering what had by then become a major question, Seetapun showed that~$\RT^2_2$ does not imply~$\ACA_0$ (see \cite[Theorems 2.1 and 3.1]{SS-1995}). Thus,~$\RT^2_2$ does not obey the ``big five'' phenomenon.

The quest to better understand~$\RT^2_2$, and in particular, of why its strength differs from that of most other theorems, has developed into a highly active and fruitful line of research, as a result of which, a ``zoo'' of mathematical principles has emerged, with a complex system of relationships between them (see \cite{Dzhafarov-zoo}). We refer the reader to Hirschfeldt \cite[Section 6]{Hirschfeldt-2014} for a survey. Figure \ref{fig1} gives a snapshot of this zoo.
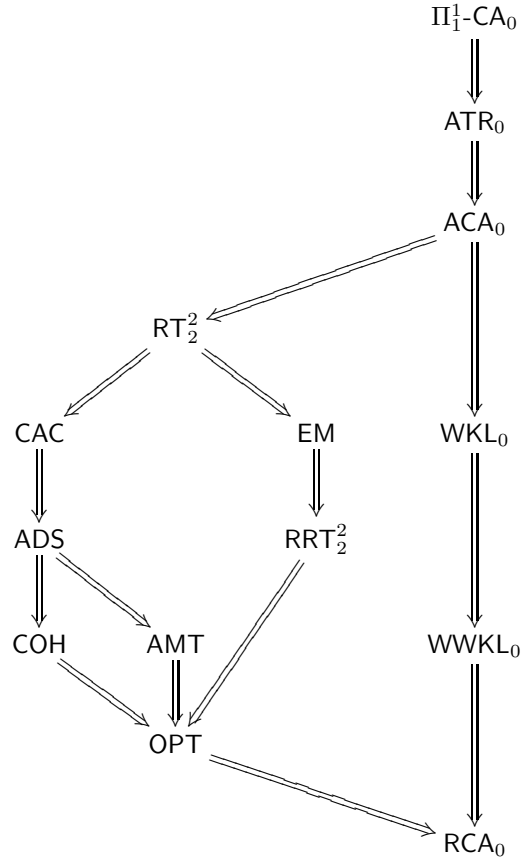
\begin{figure}[htpb]\label{fig1}
	\[
		\xymatrix @C=2pc @R=2pc{
			& & & \Pi^1_1\CA_0 \ar@2[d]\\
			& & & \ATR_0 \ar@2[d]\\
			& & & \ACA_0 \ar@2[dll] \ar@2[dd]\\
			& \RT^2_2 \ar@2[dr] \ar@2[dl]\\
			\CAC \ar@2[d] &  & \mathsf{EM} \ar@2[d] & \WKL_0 \ar@2[dd]\\
			\ADS \ar@2[dr] \ar@2[d] &  & \mathsf{RRT}^2_2 \ar@2[ddl] & \\
			\COH \ar@2[dr] & \AMT \ar@2[d] & & \WWKL_0 \ar@2[dd] \\
			& \OPT \ar@2[drr]\\
			& & & \RCA_0
		}
	\]
	\caption{Relationships among selected principles between~$\RCA_0$ and~$\ACA_0$. Arrows denote implications over~$\RCA_0$, and double arrows denote strict implications.}
\end{figure}
A conspicuous feature of this diagram is that~$\RT^2_2$ lies above every other principle, with the exception of~$\ACA_0$,~$\WKL_0$, and~$\WWKL_0$. (Whether or not~$\RT^2_2$ also implies the latter two was a longstanding problem, which was resolved only recently, by Liu \cite{Liu-2012, Liu-2015}.) While some of these principles are weaker forms of Ramsey's theorem that were introduced explicitly as a way of obtaining partial results about~$\RT^2_2$, a large number of others were studied for their own sake and with independent motivations, and come from a variety of mathematical areas (including from outside of combinatorics, such as model theory, set theory, and algorithmic randomness). 

The above suggests that~$\RT^2_2$ is a naturally strong theorem in relation to principles lying below~$\ACA_0$. Notably, there have been no examples of a natural principle\footnote{Here, we mean \emph{combinatorially} natural, and in particular, representing a single combinatorial problem. For example,~$\RT^2_2 + \WKL_0$ lies strictly below~$\ACA_0$ (see \cite[Theorem 3.1]{SS-1995}), and since~$\RT^2_2$ does not imply~$\WKL_0$, this is also strictly above~$\RT^2_2$. But~$\RT^2_2 + \WKL_0$ is a conjunction of two principles, and hence not natural in our sense.} lying strictly below~$\ACA_0$ and strictly above~$\RT^2_2$. The only candidate to be such a principle that has been previously looked at is the so-called \emph{tree theorem for pairs}, introduced by Chubb, Hirst, and McNicholl \cite{CHM-2005}, and defined below. (Recently, Patey and Frittaion \cite{FP-TA} have shown this to be closely related to a version of the Erd\H{o}s-Rado theorem. We discuss this connection further in Section \ref{S:ER} below.)

\begin{definition}[Chubb, Hirst, and McNicholl \cite{CHM-2005}]\label{defn:CHM}
	Fix~$T \subseteq 2^{<\omega}$ and~$n,k \geq 1$.
	\begin{enumerate}
		\item~$[T]^n$ is the set of all tuples~$\seq{\sigma_0,\ldots,\sigma_{n-1}} \in X^n$ with~$\sigma_0 \prec \cdots \prec \sigma_{n-1}$.
		\item A \emph{$k$-coloring of~$[T]^n$} is a function~$f : [T]^n \to k$.
		\item A set~$Y \subseteq X$ is \emph{homogeneous for~$f$}, or \emph{$f$-homogeneous}, if there is a color~$c < k$ such that~$f ([Y]^n) = \{c\}$.
		\item~$T$ is \emph{isomorphic to~$2^{<\omega}$}, written~$T \cong 2^{<\omega}$, if there is a bijection~$i : 2^{<\omega} \to T$ such that~$\sigma \preceq \tau$ if and only if~$i(\sigma) \preceq i(\tau)$ for all~$\sigma,\tau \in 2^{<\omega}$.
	\end{enumerate}
\end{definition}

\noindent As for colorings of subsets of~$\omega$, we write~$f(\sigma_0,\ldots,\sigma_{n-1})$ in place of~$f(\seq{\sigma_0,\ldots,\sigma_{n-1}})$ for~$\seq{\sigma_0,\ldots,\sigma_{n-1}} \in [T]^n$.

\begin{statement}[Tree theorem for~$n$-tuples and~$k$ colors ($\TT^n_k$)]
	For every coloring~$f : [2^{<\omega}]^n \to k$, there is an~$f$-homogeneous set~$H \subseteq 2^{<\omega}$ such that~$H \cong 2^{<\omega}$.
\end{statement}

Chubb, Hirst, and McNicholl \cite[Section 1]{CHM-2005} showed that basic results about~$\TT^n_k$ parallel the situation for Ramsey's theorem. As before, we may safely assume~$k = 2$. The base system~$\RCA_0$ suffices to prove~$\TT^1_2$, and~$\ACA_0$ suffices to prove~$\TT^n_2$ for each~$n$. It is also easy to see that~$\TT^n_2$ implies~$\RT^n_2$ over~$\RCA_0$ for all~$n$, so for~$n \geq 3$,~$\TT^n_2$ is equivalent to~$\ACA_0$, and hence to~$\RT^n_2$. By contrast, Patey \cite[Corollary 4.12]{Patey-TA3} has shown that~$\TT^2_2$ is strictly stronger than~$\RT^2_2$, leaving open the possibility that~$\TT^2_2$ might imply~$\ACA_0$. Whether this is the case has been an open question for some time, originally appearing in \cite[Question 2]{CHM-2005}, subsequently also asked about by Montalb\'{a}n \cite[Section 2.2.4]{Montalban-2011}, and variously explored by Corduan, Groszek, and Mileti \cite{CGM-2010}, Dzhafarov, Hirst, and Lakins \cite{DHL-2010}, Chong, Li, and Yang \cite{CLY-2014}, and Corduan and Groszek \cite{CG-2016}. Our main theorem in this paper is a solution to this question.

\begin{principle}[Main Theorem]
	$\RCA_0 \nvdash \TT^2_2 \to \ACA_0$.
\end{principle}

\noindent Thus, we establish~$\TT^2_2$ as the first (and only known) natural principle to lie strictly in the interval between~$\ACA_0$ and~$\RT^2_2$, and so dethrone~$\RT^2_2$ as the ``strongest of the weak principles''. Our proof of this fact develops entirely new techniques for dealing with Ramsey-like principles in the tree setting. The key difficulty here is that standard arguments for building a homogeneous set for a coloring (e.g., Mathias forcing constructions, as in the proof of Seetapun's theorem) seem, on a close inspection, very dependent on the linearity of the partial order ($\omega$) on which the coloring is defined. Thus, when dealing instead with colorings on~$2^{<\omega}$, many of the combinatorial methods from the linear setting are no longer applicable or obviously adaptable. The proof of our main theoem thus offers a way to get around these problems.

This outline of the rest of the paper is as follows. In Section \ref{S:defns}, we lay down some of the background notions and notations that we will use in the sequel. The proof of our main theorem is organized into a stable and cohesive part, in the manner first employed by Cholak, Jockusch, and Slaman \cite{CJS-2001}, though stability and cohesiveness are far less obvious notions in the tree setting than in the linear. In Section \ref{S:TT1}, we consider~$\TT^1_2$, and prove that it admits the so-called strong cone avoidance property, which then allows us to conclude that the stable tree theorem for pairs does not imply~$\ACA_0$. In Section \ref{S:TT2}, we show that the cohesive tree theorem for pairs admits cone avoidance, and hence also does not imply~$\ACA_0$. Combining these results then yields our main result. In Section \ref{S:ER}, we introduce a related theorem due to Erd\H{o}s and Rado about colorings of the rationals, and separate it from Ramsey's theorem for pairs. This separation, together with the main theorem, gives a full proof that the tree theorem for pairs lies strictly in the interval between~$\ACA_0$ and~$\RT^2_2$.

\section{Background and definitions}\label{S:defns}

%
%

Our terminology and notation in this paper is standard, e.g., as in Downey and Hirschfeldt \cite{DH-2010}. Throughout, we reserve the term \emph{tree} for downward-closed subsets of~$2^{<\omega}$, and refer to other subsets of~$2^{<\omega}$ (including those with a tree structure on them) simply as \emph{sets}.

\begin{definition}
	Let~$T \subseteq 2^{<\omega}$ be non-empty.
	\begin{enumerate}
		\item A node~$\tau \in T$ is a \emph{successor} of~$\sigma \in T$ if~$\tau \succ \sigma$ and there is no~$\xi \in T$ such that~$\sigma \prec \xi \prec \tau$.
		\item A \emph{leaf} is a node without any successor. We denote by~$\Leaf(T)$ the set of leaves of~$T$.
		\item A node~$\tau \in T$ is \emph{at level~$n$ in~$T$} if there exist precisely~$n$ proper initial segments of~$\tau \in T$.
		\item A \emph{root} of~$T$ is a node at level 0 in~$T$.
		\item The set~$T$ is \emph{at level~$n$} if every leaf is at level~$n$.
		\item We let~$T \res n$ be the set~$\{\sigma \in T : \sigma \mbox{ is at level at most } n\}$.
	\end{enumerate}
\end{definition}

\begin{definition}
	Let~$T \subseteq 2^{<\omega}$ be non-empty.
	\begin{enumerate}
		\item~$T$ is \emph{$h$-branching} for a function~$h : \omega \to \omega$ if it has a unique root and every node at level~$n$ in~$T$ which is not a leaf, has exactly~$h(n)$ immediate successors.
		\item~$T$ is \emph{2-branching} if it is~$h$-branching for the constant function~$h(n) = 2$.
		\item~$T$ is \emph{perfect} if each node has at least 2 successors.
		\item~$T$ is \emph{isomorphic to~$2^{<\omega}$}, written~$T \cong 2^{<\omega}$, if~$T$ is perfect and 2-branching.
	\end{enumerate}
\end{definition}

\noindent Note that if~$T$ is perfect then it has a~$T$-computable subset isomorphic to~$2^{<\omega}$. The definition of being isomorphic to~$2^{<\omega}$ here is different than that given in Section \ref{S:intro}, but the two are readily seen to be equivalent.

Cholak, Jockusch, and Slaman \cite[Section 7]{CJS-2001} developed a prominent framework for studying Ramsey's theorem for pairs, namely, by introducting the so-called stable Ramsey's theorem for pairs ($\SRT^2_2$) and the cohesive principle ($\COH$), into which they showed~$\RT^2_2$ can be decomposed. We recall the definitions.

\begin{definition}
	Fix~$f : [\omega]^2 \to k$ and an infinite~$X \subseteq \omega$.
	\begin{enumerate}
		\item~$f$ is \emph{stable over~$X$} if for every~$x$, there is an~$s > x$ such that~$f(x,y) = f(x,s)$ for all~$y \geq s$.
		\item~$f$ is \emph{stable} if it is stable over~$\omega$.
	\end{enumerate}
\end{definition}

\begin{statement}[Stable Ramsey's theorem for pairs and~$k$ colors ($\SRT^2_k$)]
	For every stable coloring~$f : [\omega]^2 \to k$, there is an infinite~$f$-homogeneous set. 
\end{statement}

\begin{definition}
	Let~$\vec{R} = \seq{R_0, R_1,\ldots}$ be a sequence of subsets of~$\omega$. An infinite set~$C \subseteq \omega$ is \emph{$\vec{R}$-cohesive} if for each~$i \in \omega$,~$C \subseteq^{*} R_i$ or~$C \subseteq^{*} \overline{R_i}$.
\end{definition}


\begin{statement}[Cohesive principle ($\coh$)]
	For every uniform sequence of sets~$\vec{R}$, there is a~$\vec{R}$-cohesive set.
\end{statement}

\begin{proposition}[Cholak, Jockusch, and Slaman {\cite[Lemma 7.11]{CJS-2001}}; Mileti {\cite[Claim A.1.3]{Mileti-2004}}]\label{prop:CJS_decomp}
	$\RCA_0 \vdash \SRT^2_2 + \COH$.	
\end{proposition}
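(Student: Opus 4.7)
The statement should be read as the classical Cholak--Jockusch--Slaman decomposition: over $\RCA_0$, $\RT^2_2$ is equivalent to $\SRT^2_2 + \COH$. The trivial direction is that $\RT^2_2$ implies $\SRT^2_2$ (by specialization) and implies $\COH$ (a standard argument, applying $\RT^2_2$ to the coloring $f(x,y) = R_x(y)$ after suitable bookkeeping). So my plan focuses on the substantive direction: $\RCA_0 + \SRT^2_2 + \COH \vdash \RT^2_2$.

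Given a coloring $f:[\omega]^2 \to 2$, the plan is first to associate to $f$ the uniformly computable sequence $\vec{R} = \seq{R_0,R_1,\ldots}$ defined by $R_x = \{y > x : f(x,y) = 1\}$. Apply $\COH$ to $\vec{R}$ to obtain an infinite $\vec{R}$-cohesive set $C$, enumerated in increasing order as $c_0 < c_1 < c_2 < \cdots$ (this enumeration is available in $\RCA_0$ since $C$ is infinite). Now define $g:[\omega]^2 \to 2$ by $g(i,j) = f(c_i,c_j)$ for $i < j$.

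The key observation, provable in $\RCA_0$, is that $g$ is stable. Fix $i$. By cohesiveness, either $C \subseteq^{*} R_{c_i}$ or $C \subseteq^{*} \overline{R_{c_i}}$. In the first case, $f(c_i,c_j) = 1$ for all sufficiently large $j$; in the second case, $f(c_i,c_j) = 0$ for all sufficiently large $j$. Either way, there is $s > i$ with $g(i,j) = g(i,s)$ for all $j \geq s$, which is stability. Then apply $\SRT^2_2$ to $g$ to get an infinite $g$-homogeneous set $H' \subseteq \omega$, and set $H = \{c_i : i \in H'\}$. Since $f(c_i,c_j) = g(i,j)$ whenever $i < j$, the set $H$ is infinite and $f$-homogeneous, which proves $\RT^2_2$.

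The main subtlety is purely a matter of formalization in $\RCA_0$: we need to ensure that each step (the sequence $\vec{R}$, the enumeration of $C$, the coloring $g$, and the final set $H$) is given by a $\Delta^0_1$ definition relative to the data at hand. The sequence $\vec{R}$ is uniformly computable from $f$; the enumeration of $C$ is computable from $C$; $g$ is computable from $f$ and $C$; and $H$ is computable from $C$ and $H'$. So no comprehension beyond $\Delta^0_1$ is needed, and the argument goes through in $\RCA_0$. The only minor pitfall is being careful that the cohesiveness assertion ``$C \subseteq^* R_{c_i}$ or $C \subseteq^* \overline{R_{c_i}}$'' suffices to yield the stability clause for $g$ in the sense defined above, which it does verbatim. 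References to the original proofs of Cholak--Jockusch--Slaman and Mileti can be cited for the full verification.
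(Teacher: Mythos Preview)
Your reading of the (typo-ridden) statement is the right one, and your argument is the standard Cholak--Jockusch--Slaman proof: pass to a cohesive set to make the induced coloring stable, then apply $\SRT^2_2$. The paper does not give its own proof of this proposition---it simply cites \cite{CJS-2001} and \cite{Mileti-2004}---but it does rehearse exactly this decomposition informally in the paragraph preceding Theorem~\ref{thm:coh-n-er-fairness} and again in the proof of Theorem~\ref{thm:rt22-er-fairness-preservation}, so your approach matches what the authors have in mind.
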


\noindent The utility of this decomposition fact is that each of~$\SRT^2_2$ and~$\COH$ can be regarded as a form of~$\RT^1_2$ (the infinitary pigeonhole principle), and hence is in many ways easier to work with than~$\RT^2_2$. (See \cite[Section 6.4]{Hirschfeldt-2014} for an insightful discussion.)

For the tree theorem, notions of stability and cohesiveness were first considered by Dzhafarov, Hirst, and Lakins \cite{DHL-2010}. As it turns out, both these notions admit several reasonable adaptations from the linear to the tree setting; in \cite{DHL-2010}, the authors identified and studied five distinct such notions. For our purposes here, we will deal only with what was in \cite[Definition 3.2]{DHL-2010} called \emph{1-stability}. Since no confusion can consequently arise, we will refer to this simply as \emph{stability} below.

\begin{definition}
	Fix~$f : [2^{<\omega}]^2 \to k$ and~$T \cong 2^{<\omega}$.
	\begin{enumerate}
		\item~$f$ is \emph{stable over~$T \cong 2^{<\omega}$} if for every~$\sigma \in T$, there is a color~$c < k$ and a level~$n \in \omega$ such that~$f(\sigma, \tau) = c$ for every~$\tau \in T$ such that~$\tau \succ \sigma$ and~$|\tau| > n$.
		\item~$f$ is \emph{stable} if it is stable over~$2^{<\omega}$.
	\end{enumerate}
\end{definition}

\noindent We have the following restrictions of~$\TT^2_k$, and an associated decomposition fact.

\begin{statement}[$\STT^2_k$]
	For every stable coloring~$f : [2^{<\omega}]^2 \to k$, there is an~$f$-homogeneous set~$H \subseteq 2^{<\omega}$ such that~$H \cong 2^{<\omega}$. 
\end{statement}


\begin{statement}[$\CTT^2_k$]
	For every coloring~$f : [2^{<\omega}]^2 \to k$, there is a set~$H \subseteq 2^{<\omega}$ such that~$H \cong 2^{<\omega}$ and over which~$f$ is stable.
\end{statement}

\begin{proposition}[Dzhafarov, Hirst, and Lakins {\cite[Proposition 3.18]{DHL-2010}}]
	$\RCA_0 \vdash \TT^2_k \iff \STT^2_k + \CTT^2_k$.
\end{proposition}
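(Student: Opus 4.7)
The plan is to prove both directions of the equivalence, paralleling the Cholak--Jockusch--Slaman decomposition of $\RT^2_2$ into $\SRT^2_2 + \COH$ recalled in Proposition 2.6 above.

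For the forward direction, $\TT^2_k \to \STT^2_k$ is immediate, since every stable coloring of $[2^{<\omega}]^2$ is in particular a coloring. For $\TT^2_k \to \CTT^2_k$, given any $f : [2^{<\omega}]^2 \to k$, I would apply $\TT^2_k$ to obtain an $f$-homogeneous set $H \cong 2^{<\omega}$; on $[H]^2$ the coloring $f$ is constant, which is strictly stronger than being stable over $H$, so $H$ witnesses $\CTT^2_k$ as well.

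For the reverse direction, given $f : [2^{<\omega}]^2 \to k$, I would first apply $\CTT^2_k$ to obtain $H \subseteq 2^{<\omega}$ with $H \cong 2^{<\omega}$ over which $f$ is stable. Since $H$ is perfect, $2$-branching, and has a unique root, I would build an order-isomorphism $i : 2^{<\omega} \to H$ by a straightforward recursion (sending the root of $2^{<\omega}$ to the root of $H$, and the two immediate successors of each $\sigma$ to the two immediate successors in $H$ of $i(\sigma)$, in lexicographic order); this $i$ is computable from $H$. I would then define the pullback coloring $\tilde f(\sigma, \tau) = f(i(\sigma), i(\tau))$ on $[2^{<\omega}]^2$, apply $\STT^2_k$ to obtain an $\tilde f$-homogeneous $\tilde H \cong 2^{<\omega}$, and take $i(\tilde H)$ as the desired $f$-homogeneous copy of $2^{<\omega}$. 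The required properties of $i(\tilde H)$ are inherited from $\tilde H$ because $i$ preserves $\preceq$: $f$-homogeneity transfers from $\tilde f$-homogeneity, and immediate successors in $\tilde H$ map to immediate successors in $i(\tilde H)$, so $i(\tilde H)$ is perfect and $2$-branching.

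The one step that requires genuine care — and the main obstacle, modest as it is — is verifying that the pullback $\tilde f$ is stable in the sense of $\STT^2_k$, which is formulated in terms of the string length $|\tau|$ of nodes in $2^{<\omega}$. The stability of $f$ over $H$ supplies, for each $\sigma = i(\sigma')$, a color $c$ and a length threshold $n$ such that $f(\sigma, \tau) = c$ for every $\tau \in H$ with $\tau \succ \sigma$ and $|\tau| > n$; but $i$ preserves tree-order, not length, so one must translate this witness back into one involving $|\tau'|$. The key observation is that $\{\tau \in H : |\tau| \le n\}$ is finite for each $n$, so there is some $n'$ such that every $\tau' \in 2^{<\omega}$ with $|\tau'| > n'$ satisfies $|i(\tau')| > n$; this converts the stability witness for $f$ at $\sigma$ into a stability witness for $\tilde f$ at $\sigma'$. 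All constructions are computable from the given data, so the argument formalizes in $\RCA_0$.
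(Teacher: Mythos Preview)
The paper does not supply its own proof of this proposition; it merely cites \cite[Proposition 3.18]{DHL-2010}. Your argument is correct and is the standard one: the forward direction is trivial, and for the converse you first stabilize via $\CTT^2_k$, pull back along an $H$-computable isomorphism $i : 2^{<\omega} \to H$, apply $\STT^2_k$, and push forward. Your identification of the one genuine point of care---that the length-based stability threshold for $f$ over $H$ must be converted to a length-based threshold for $\tilde f$ over $2^{<\omega}$---is exactly right, and your finiteness argument handles it; the only thing worth adding for $\RCA_0$-purists is that the recursive construction of $i$ (finding the two immediate successors in $H$ of a given node) terminates by a bounded search, since $H$ is perfect and $2$-branching.
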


\noindent The analogy between~$\SRT^2_k$ and~$\STT^2_k$ is clear. The analogy between~$\COH$ and~$\CTT^2_k$ stems from the fact, due to Hirschfeldt and Shore \cite[Proposition 4.4 and 4.8]{HS-2007}, that over~$\RCA_0 + \mathsf{B}\Sigma^0_2$ (and hence over~$\omega$-models),~$\COH$ is equivalent to the principle~$\mathsf{CRT}^2_k$, which asserts that for every coloring~$f : [\omega]^2 \to k$ there is an infinite set over which~$f$ is stable.

All the principles of the kind we are discussing here have the same syntactic form as statements in the language of second-order arithmetic, namely
\begin{equation}\label{eq:Pi12}
	(\forall X)[\phi(X) \to (\exists Y)[\theta(X,Y)]],
\end{equation}
where~$\phi$ and~$\theta$ are arithmetical formulas. The sets~$X$ satisfying~$\phi(X)$ are commonly called the \emph{instances} of the principle, and for each such~$X$, the sets~$Y$ satisfying~$\theta(X,Y)$ are called the \emph{solutions to~$X$}. (While the presentation in \eqref{eq:Pi12} is not unique, in practice there is always a fixed one we have in mind, and hence an implicitly understood class of instances and solutions for it.) For example, the instances of~$\STT^n_k$ are the stable colorings~$f : [2^{<\omega}]^2 \to k$, and the solutions to any such~$f$ are the~$f$-homogeneous sets~$H \cong 2^{<\omega}$. (See also \cite[Section 1]{DDHMS-2016} and \cite[Section 1.4]{Hirschfeldt-2014} for further examples.) The terminology is not so important for us in this paper, but it does allow us to state in a general the following properties.

\begin{definition}
	Let~$\mathsf{P}$ be a principle as in \eqref{eq:Pi12} .
	\begin{enumerate}
		\item~$\mathsf{P}$ admits \emph{cone avoidance} if for every set~$Z$, every set~$C \nleq_T Z$, and every~$Z$-computable instance~$I$ of~$\mathsf{P}$, there is a solution~$S$ to~$I$ such that~$C \nleq_T S \oplus Z$.
		\item~$\mathsf{P}$ admits \emph{strong cone avoidance} if for every set~$Z$, every set~$C \nleq_T Z$, and every instance~$I$ of~$\mathsf{P}$, there is a solution~$S$ to~$I$ such that~$C \nleq_T S \oplus Z$.
	\end{enumerate}
\end{definition}

\noindent The term ``cone avoidance'' refers to the fact that the solution~$S$ above avoids the upper cone~$\{X : X \geq_T C\}$. The adjective ``strong'' in part 2 of the definition refers to the fact that the instance~$I$ there is arbitrary, and in particular, not necessarily computable from the set~$Z$, as in part 1. The following lemma is standard.

\begin{lemma}\label{lem:coneavoidtononimplication}
	If~$\mathsf{P}$ admits cone avoidance, then~$\RCA_0 \nvdash \mathsf{P} \to \ACA_0$.
\end{lemma}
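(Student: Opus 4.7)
The plan is the standard $\omega$-model construction. It suffices to build an $\omega$-model $\mathcal{M} = (\omega, \mathcal{I})$ of $\RCA_0 + \mathsf{P}$ in which $\emptyset'$ is not present; since $\ACA_0$ implies the existence of $\emptyset'$ (the set coding the halting problem being the paradigmatic $\Sigma^0_1$-definable set), any such $\mathcal{M}$ refutes $\mathsf{P} \to \ACA_0$, and this, via soundness, gives $\RCA_0 \nvdash \mathsf{P} \to \ACA_0$.

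Take $C = \emptyset'$ as the fixed non-computable set whose cone we will avoid. I construct a sequence $\emptyset = Z_0 \leq_T Z_1 \leq_T \cdots$ of sets, each satisfying $C \nleq_T Z_n$, and then take $\mathcal{I} = \{X : X \leq_T Z_n \text{ for some } n\}$. Using a standard pairing, fix an enumeration such that at stage $n = \langle i,j \rangle$ the object considered is the $j$-th set computable from $Z_i$, call it $X$. If $X$ happens to code an instance of $\mathsf{P}$ (i.e.\ $\phi(X)$ holds), then since $C \nleq_T Z_n$ and $X \leq_T Z_n$, cone avoidance applied to $\mathsf{P}$ with parameter $Z_n$ supplies a solution $S$ to $X$ with $C \nleq_T S \oplus Z_n$; put $Z_{n+1} = S \oplus Z_n$. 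Otherwise, set $Z_{n+1} = Z_n$. By construction $C \nleq_T Z_n$ for all $n$.

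Now verify that $\mathcal{M} = (\omega, \mathcal{I})$ is as required. The class $\mathcal{I}$ is a Turing ideal because the $Z_n$ are increasing under $\leq_T$ and closed under join by construction, so $\mathcal{M} \models \RCA_0$. For $\mathsf{P}$: any instance of $\mathsf{P}$ in $\mathcal{M}$ is some $X \in \mathcal{I}$, hence $X \leq_T Z_i$ for some $i$, hence $X$ is the $j$-th $Z_i$-computable set for some $j$. At stage $n = \langle i,j\rangle$ we therefore placed a solution $S$ into $\mathcal{I}$ (via $Z_{n+1}$). So $\mathcal{M} \models \mathsf{P}$. Finally, $\emptyset' \notin \mathcal{I}$ since $\emptyset' = C \nleq_T Z_n$ for every $n$; yet $\ACA_0$ proves the existence of a set coding the halting problem relative to $\emptyset$, so $\mathcal{M} \not\models \ACA_0$.

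There is no real obstacle here; the only subtlety to flag is that the definition of cone avoidance applies to instances computable from the parameter $Z$, which is exactly what is needed at each stage since we only process sets $X \leq_T Z_n$. (Had we attempted to prove the analogous statement using only \emph{strong} cone avoidance, we would not need this matching between $X$ and the parameter, but for the implication stated here, ordinary cone avoidance is already sufficient, and the argument above is the standard template used throughout reverse mathematics.)
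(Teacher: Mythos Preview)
Your proof is correct and follows essentially the same approach as the paper: build an increasing $\leq_T$-chain $Z_0 \leq_T Z_1 \leq_T \cdots$ with $\emptyset' \nleq_T Z_n$, using a pairing to dovetail over all $Z_i$-computable instances and applying cone avoidance at each stage, then take the Turing ideal generated by the chain. The only cosmetic difference is that the paper indexes instances via Turing functionals $\Phi_e^{Z_m}$ rather than ``the $j$-th $Z_i$-computable set,'' and explicitly notes that the pairing satisfies $m \leq n$ so that $Z_m$ is already defined at stage $n$; you should likewise ensure your pairing has $i \leq n$.
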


\begin{proof}
	We define a chain of sets~$Z_0 \leq_T Z_1 \leq_T \cdots$ as follows. Let~$Z_0 = \emptyset$, and suppose inductively that we have defined~$Z_n$ and that~$\emptyset' \nleq_T Z_n$. Say~$n = \seq{m,e}$, so that~$m \leq n$. If~$\Phi^{Z_m}_e$ does not define an instance of~$\mathsf{P}$, we set~$Z_{n+1} = Z_n$. Otherwise, call this instance~$I$, and regard it as a~$Z_n$-computable set. By cone avoidance for~$\mathsf{P}$, there exists a solution~$S$ to~$I$ such that~$\emptyset' \nleq_T S \oplus Z_n$. We set~$Z_{n+1} = S + Z_n$. This completes the definition of the chain. Now let~$\mathcal{M}$ be the~$\omega$-model with second order part~$\{X : (\exists n)[X \leq_T Z_n]\}$. Then by construction,~$\mathcal{M} \models \RCA_0 + \mathsf{P}$, but~$\emptyset' \notin \mathcal{M}$ and so~$\mathcal{M} \not\models \ACA_0$.
\end{proof}

Finally, we recall the definition of Mathias forcing, which is commonly employed in the construction of homogeneous sets. In the sequel, for subsets~$A$ and~$B$ of~$\omega$, we write~$A < B$ if~$A$ is finite and~$\max A < \min B$.

\begin{definition}
\
	\begin{enumerate}
		\item A \emph{Mathias condition} is a pair~$(F,X)$, where~$F$ is a finite subset of~$\omega$,~$X$ is an infinite subset of~$\omega$ called the \emph{reservoir}, and~$F < X$.
		\item A Mathias condition~$(E,Y)$ \emph{extends}~$(F,X)$, written~$(E,Y) \leq (F,X)$, if~$F \subseteq E \subseteq F \cup X$ and~$Y \subseteq X$.
	\end{enumerate}
\end{definition}

\noindent A set~$S$ is said to \emph{satisfy} a condition~$(F,X)$ if~$S$ is infinite and~$F \subseteq S \subseteq F \cup X$.

We refer the reader to Cholak, Jockusch, and Slaman \cite[Sections 4 and 5]{CJS-2001} for some prominent examples of the use of Mathias forcing in reverse mathematics, and to Cholak, Dzhafarov, Hirst, and Slaman \cite{CDHS-2014} for some general computability-theoretic facts about this forcing notion. We assume familiarity with the basics of forcing in arithmetic, as described, e.g., in Shore \cite{Shore-TA}. Below, we will work with a number of forcing notions that are defined as combinatorial elaborations of Mathias forcing, each giving rise to a forcing language and forcing relation in the standard manner (see \cite[Section 3.2]{Shore-TA}).

\section{Partitions of trees and strong cone avoidance}\label{S:TT1}

Our starting point is to prove a tree analogue of the following result about the infinitary pigeonhole principle.

\begin{theorem}[Dzhafarov and Jockusch {\cite[Lemma 3.2]{DJ-2009}}]\label{T:DzhaJo} 
	$\RT^1_2$ admits strong cone avoidance.
\end{theorem}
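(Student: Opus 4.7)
The plan is to reduce strong cone avoidance for $\RT^1_2$ to an equivalent one-set ``subset'' cone-avoidance lemma, which is the main technical content.

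\emph{Setup and trivial case.} Fix a $2$-coloring $f : \omega \to 2$, an oracle $Z$, and a set $C \nleq_T Z$; write $A_c = f^{-1}(c)$ for $c \in \{0,1\}$. If some $A_c$ is finite, then $A_{1-c}$ is cofinite, hence $Z$-computable, so $H := A_{1-c}$ satisfies $H \oplus Z \equiv_T Z$ and is a valid solution. I may therefore assume both $A_0$ and $A_1$ are infinite.

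\emph{Reduction.} The theorem then reduces to the following subset lemma: for every infinite $A \subseteq \omega$ and every $C \nleq_T Z$, there exists an infinite $X \subseteq A$ with $C \nleq_T X \oplus Z$. Applied to $A = A_0$, this produces the desired homogeneous set $H := X \subseteq A_0$.

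\emph{Proof of the subset lemma.} Enumerate $A = \{a_0 < a_1 < \cdots\}$ and build $X = \{a_{n_0} < a_{n_1} < \cdots\}$ in stages, at stage $e$ diagonalizing against the Turing functional $\Phi_e$. The standard dichotomy applies at each stage: either (i) some finite partial choice of the $n_k$ forces $\Phi_e^{X \oplus Z}(n) \downarrow \neq C(n)$ for some $n$, in which case I take that partial choice and $\Phi_e^{X \oplus Z} \neq C$ is secured; or (ii) for every finite partial choice and every $n$ the computation is either divergent or yields $C(n)$, in which case I would show by a free-choice argument that one can extend to keep $\Phi_e^{X \oplus Z}(n)$ divergent at some particular $n$.

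\emph{Main obstacle.} The hard case is the second horn of the dichotomy. Here one cannot derive a direct contradiction from $C \nleq_T Z$: an enumeration procedure over partial choices yields only that $C \leq_T A \oplus Z$, which may well hold (for example, $A \equiv_T f$ when $C$ is coded into the coloring). The key insight is that the construction of $X$ proceeds by \emph{free choice} rather than as an oracle computation, so one can select the indices $n_k$ to evade the countably many finite patterns that would force $\Phi_e^{X \oplus Z}(n)$ to converge at some fixed $n$, keeping that computation divergent along the constructed $X$. This use of unrestricted combinatorial choice, unavailable in the weaker (instance-computable) cone-avoidance setting, is precisely what separates strong cone avoidance from weak cone avoidance and constitutes the main technical difficulty of the proof.
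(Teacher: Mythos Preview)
The paper does not actually prove this theorem; it merely cites it from Dzhafarov--Jockusch and then proves the tree analogue. So there is no ``paper's own proof'' to compare against. That said, your proposal contains a genuine error.

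Your reduction to the ``subset lemma'' fails because that lemma is \emph{false}. Here is a counterexample. Let $C$ be any noncomputable set and take $Z = \emptyset$. Define $a_n = 2^n + \sum_{i<n} C(i)\,2^i$, so that $a_n \in [2^n, 2^{n+1})$ and $A = \{a_n : n \in \omega\}$ is infinite. From any single element $a_n$ one can recover both $n$ and $C \upharpoonright n$. Hence every infinite $X \subseteq A$ computes $C$: to find $C(m)$, search $X$ for any element $\geq 2^{m+1}$ and decode. So there is no infinite $X \subseteq A$ with $C \nleq_T X$, and your subset lemma is refuted. Your ``free choice'' paragraph cannot rescue this: no amount of clever selection of indices $n_k$ will produce a subset of $A$ that fails to compute $C$, because \emph{every} infinite subset of $A$ computes $C$.

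The mistake is that you committed to a single color in advance. Strong cone avoidance for $\RT^1_2$ crucially exploits the freedom to choose \emph{which} color class to thin out; you cannot prespecify it. The Dzhafarov--Jockusch argument builds candidate solutions $G_0 \subseteq A_0$ and $G_1 \subseteq A_1$ \emph{simultaneously} via Mathias forcing, meeting disjunctive requirements of the form $\Phi_e^{G_0 \oplus Z} \neq C \vee \Phi_{e'}^{G_1 \oplus Z} \neq C$. The point is that in the ``bad'' case (your horn (ii)), the failure to split computations on one side can be played off against the other side, and only at the end does a pairing argument select the color $i$ for which $G_i \oplus Z$ avoids the cone above $C$. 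Your argument discards this interplay and therefore cannot succeed.
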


\noindent We prove our analogue as Theorem \ref{thm:tt1-strong-avoidance} below, from which we will also obtain cone avoidance for $\STT^2_2$.

We begin with a slightly weaker result, which we preface with a definition. Given a finite set~$F \subseteq 2^{<\omega}$, we denote by~$[F]^{\preceq}$ the set of~$\tau \in 2^{<\omega}$ extending some $\sigma \in F$. We write $[F]^{\prec}$ for~$[F]^{\preceq} \setminus F$.

\begin{definition}
	Let $T \subseteq 2^{<\omega}$ be non-empty. We denote by~$(\mathbb{P}_T, \preceq_T)$ the partial order whose elements are ordered $n$-tuples~$\seq{\sigma_0, \dots, \sigma_{n-1}}$ of pairwise incomparable nodes of~$T$, and such that~$\seq{\tau_0, \dots, \tau_{n-1}} \succeq_T \seq{\sigma_0, \dots, \sigma_{n-1}}$ if~$\tau_i \succeq \sigma_i$ for each~$i < n$. 
\end{definition}

\noindent Thus, for every finite set~$F \subseteq T$, we have that $\Leaf(F) \in \mathbb{P}_T$. Going forward, we notate elements of $\mathbb{P}_T$ as tuples $\vec{\sigma}$.

\begin{definition}
Fix a perfect set~$T \subseteq 2^{<\omega}$.
\begin{enumerate}
	\item A formula~$\varphi(U)$ (where $U$ is a finite coded set) is \emph{essential below} $\vec{\sigma} \in \mathbb{P}_T$ if for every~$\vec{\tau} \succeq_T \vec{\sigma}$, there is a finite set~$R \subseteq T \cap [\vec{\tau}]^{\preceq}$ such that~$\varphi(R)$ holds.
	\item A set~$A$ is \emph{$T$-densely $Z$-hyperimmune} if for every~$\vec{\sigma} \in \mathbb{P}_T$ and every $\Sigma^{0,Z}_1$ formula~$\varphi(U)$ essential below~$\vec{\sigma}$, there is a finite set~$R \subseteq T \cap [\vec{\sigma}]^{\preceq} \cap \overline{A}$ such that~$\varphi(R)$ holds.
\end{enumerate}
\end{definition}

\begin{theorem}\label{thm:tt1-strong-avoidance-comp}
Fix a set~$Z$, a set~$C \not \leq_T Z$ and a $Z$-computable perfect set~$T \subseteq 2^{<\omega}$.
For every set~$A \subseteq T$ which is $T$-densely $Z$-hyperimmune,
there is a set~$G \subseteq T \cap \overline{A}$ such that~$G \cong 2^{<\omega}$
and $C \not \leq_T G \oplus Z$.
\end{theorem}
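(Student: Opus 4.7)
The plan is to construct $G$ via a forcing adapted to the tree setting, in the spirit of Mathias forcing. Conditions are pairs $p = (F, \vec{\sigma})$ where $F \subseteq T \cap \overline{A}$ is a finite $2$-branching subtree with leaves exactly the components of $\vec{\sigma} \in \mathbb{P}_T$. An extension $(F', \vec{\tau}) \leq (F, \vec{\sigma})$ satisfies $F \subseteq F' \subseteq T \cap \overline{A}$, $F'$ is $2$-branching, and its leaves $\vec{\tau}$ satisfy $\vec{\tau} \succeq_T \vec{\sigma}$. A sufficiently generic filter yields $G = \bigcup_n F_n \subseteq T \cap \overline{A}$, perfect and $2$-branching, so $G \cong 2^{<\omega}$.

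To deepen the frontier (and so ultimately guarantee $G \cong 2^{<\omega}$), consider the $\Sigma^{0,Z}_1$ formula $\varphi_{\mathrm{dens}}(U)$ asserting ``$U$ contains, above each $\sigma_i$, a finite $2$-branching subtree of $T$ of depth at least $1$''. Since $T$ is perfect, $\varphi_{\mathrm{dens}}$ is essential below $\vec{\sigma}$. The $T$-dense $Z$-hyperimmunity of $A$ then yields $R \subseteq T \cap [\vec{\sigma}]^{\preceq} \cap \overline{A}$ with $\varphi_{\mathrm{dens}}(R)$, whence $(F \cup R, \vec{\tau})$ (with $\vec{\tau}$ the new leaves) is a valid extension.

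For cone avoidance against a fixed index $e$, I would use the $\Sigma^{0,Z}_1$ formula
\[
  \psi_e(U) \;:=\; \exists x\, \exists R_0, R_1 \subseteq U \, \bigl( \Phi^{(F \cup R_0) \oplus Z}_e(x){\downarrow} = 0 \;\wedge\; \Phi^{(F \cup R_1) \oplus Z}_e(x){\downarrow} = 1 \bigr).
\]
If $\psi_e$ is essential below $\vec{\sigma}$, hyperimmunity produces $R \subseteq T \cap [\vec{\sigma}]^{\preceq} \cap \overline{A}$ with $\psi_e(R)$; the corresponding $x, R_0, R_1$ let us choose $v \neq C(x)$, fold $R_v$ into $F$, and close up to a valid $2$-branching extension (using the density argument) forcing $\Phi^{G \oplus Z}_e(x){\downarrow} = v \neq C(x)$. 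If $\psi_e$ is not essential, fix $\vec{\tau} \succeq_T \vec{\sigma}$ witnessing this, so that below $\vec{\tau}$ at most one value is forceable for each $x$. Let $\alpha(x)$ denote this unique forceable value (undefined if none exists); $\alpha$ is $\Sigma^{0,Z}_1$ with parameter $\vec{\tau}$. If $\alpha$ is partial, the witness $x$ lets us extend $p$ through $\vec{\tau}$ to force $\Phi^{G \oplus Z}_e(x) \uparrow$. If $\alpha$ is total, then $\alpha \leq_T Z$, so $\alpha \neq C$; picking $x$ with $\alpha(x) \neq C(x)$ and extending $p$ through $\vec{\tau}$, any convergent computation $\Phi^{G \oplus Z}_e(x)$ has value $\alpha(x) \neq C(x)$.

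The principal obstacle is technical: each extension in the argument must be completed to a $2$-branching subtree of $T \cap \overline{A}$, and extensions below the current frontier must not disturb converged computations. The former is handled by repeated applications of $T$-dense $Z$-hyperimmunity to tailored $\varphi_{\mathrm{dens}}$-style formulas capturing both the required tree shape and the computational witnesses we want to preserve; the latter by the standard convention that forced convergences only consult the ``committed'' part $F$ of the oracle, which is unchanged by below-frontier growth. The subtle point is that in the non-essential case, the extension through the chosen $\vec{\tau}$ must be realized as a $2$-branching tree in $T \cap \overline{A}$ without reintroducing $R$'s that contradict the non-essentiality of $\psi_e$, which is why we need the density argument to interact coherently with the forcing question.
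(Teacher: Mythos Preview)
Your approach is essentially the paper's: a Mathias-type forcing with finite 2-branching stems in $T \cap \overline{A}$ and a reservoir specified by a tuple of incomparable nodes, with the diagonalization lemma split into two cases according to whether a disagreement formula is essential. Two technical points deserve comment.

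First, the paper's conditions are pairs $(F, \vec{\sigma})$ with $\vec{\sigma} \succeq_T \Leaf(F)$, allowing a gap between the leaves of $F$ and the frontier $\vec{\sigma}$. This lets the reservoir be shrunk---passing to $(F, \vec{\tau})$ with $\vec{\tau} \succeq_T \vec{\sigma}$---without growing $F$ at all. In the non-essential case that is exactly the move: the extension is simply $(F, \vec{\tau})$, no appeal to hyperimmunity is needed, and $\Gamma^{G \oplus Z}$ is thereby forced to be partial or $Z$-computable. Your formulation, with $\vec{\sigma} = \Leaf(F)$ always, obliges you to grow $F$ through $\vec{\tau}$, which is possible but is extra work you acknowledge without carrying out. (Note also that your extension relation as literally written cannot hold for $F' \supsetneq F$: two distinct finite 2-branching sets with the same root have different numbers of leaves, so $\Leaf(F') \succeq_T \Leaf(F)$ fails in $\mathbb{P}_T$. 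Presumably you intend that each leaf of $F'$ extend some leaf of $F$.)

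Second, and this is the real gap, your $\psi_e$ does not require $F \cup R_0$ and $F \cup R_1$ to be 2-branching. In the essential case you obtain $R_v \subseteq \overline{A}$ with $\Phi_e^{(F \cup R_v) \oplus Z}(x) = v$, but $F \cup R_v$ need not be 2-branching, and the defect may be irreparable by \emph{adding} nodes: a leaf of $F$ can acquire three or more pairwise incomparable successors from $R_v$, and dropping any of them may kill the computation. ``Closing up using the density argument'' does not address this. The paper's fix is to build the shape constraint into the formula itself: $\varphi(U)$ asks for $E_0, E_1 \subseteq U$ with $F \cup E_0$ and $F \cup E_1$ already 2-branching and $\Gamma^{(F \cup E_0) \oplus Z}(n) \neq \Gamma^{(F \cup E_1) \oplus Z}(n)$. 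Essentiality of this $\varphi$ is no harder to verify (since $T$ is perfect), and the witness delivered by hyperimmunity then yields a valid condition $(F \cup E, \vec{\tau})$ with no closing up required. This single adjustment dissolves the obstacle you raise in your final paragraph.
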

\begin{proof}
Fix~$Z$, $C$, $T$ and~$A \subseteq T$.
We will build the set~$G$ by forcing. Our forcing
conditions are pairs $(F, \vec{\sigma})$, where~$F \subseteq T \cap \overline{A}$
is a finite 2-branching set and~$\vec{\sigma} \succeq_T \Leaf(F)$.
One can see the condition~$c = (F, \vec{\sigma})$ as 
the Mathias condition~$\tilde{c} = (F, T \cap [\vec{\sigma}]^{\preceq})$.
A condition~$d = (E, \vec{\tau})$ \emph{extends} a condition~$c = (F, \vec{\sigma})$
(written $d \leq c$) if the Mathias condition~$\tilde{d}$ extends the Mathias condition~$\tilde{c}$,
that is, $F \subseteq E \subseteq F \cup (T \cap [\vec{\sigma}]^{\preceq})$
and $T \cap [\vec{\tau}]^{\preceq} \subseteq T \cap [\vec{\sigma}]^{\preceq}$.
The following lemma shows that every sufficiently generic filter yields 
a set $G \cong 2^{<\omega}$.

\begin{lemma}\label{lem:tt1-strong-avoidance-comp-ext}
For every condition~$c = (F, \vec{\sigma})$ and for every~$\sigma \in \vec{\sigma}$,
there is some extension~$d = (E, \vec{\tau})$ of~$c$
such that~$E \cap [\sigma]^{\prec} \neq \emptyset$.
\end{lemma}
\begin{proof}
Fix~$c$ and~$\vec{\sigma}$.
Let~$\varphi(U)$ be the $\Sigma^{0,Z}_1$ formula
which holds if~$U$ contains at least 2 incomparable nodes in $[\sigma]^{\preceq}$.
The formula $\varphi(U)$ is essential below~$\sigma$,
so by $T$-dense $Z$-hyperimmunity of~$A$, 
there is some finite set~$R \subseteq T \cap [\vec{\sigma}]^{\preceq} \cap \overline{A}$ such that~$\varphi(R)$ holds.
Unfolding the definition of~$\varphi(R)$, there are two incomparable nodes~$\xi_0, \xi_1 \in T \cap [\sigma]^{\preceq} \cap \overline{A}$.
Let~$\vec{\tau} \succeq_T \Leaf(F \cup \{\xi_0, \xi_1\})$ be such that~$[\vec{\tau}]^{\preceq} \subseteq [\vec{\sigma}]^{\preceq}$.
The condition~$d = (F \cup \{\xi_0, \xi_1\}, \vec{\tau})$ is the desired extension of~$c$.
\end{proof}

A set~$G$ \emph{satisfies} $c = (F, \vec{\sigma})$ if $G \cong 2^{<\omega}$, $G \subseteq T \cap \overline{A}$,
and $G$ satisfies the Mathias condition~$\tilde{c}$.
set~$H$ satisfying~$c$.

\begin{lemma}\label{lem:tt1-strong-avoidance-comp-force}
For every condition~$c$ and every Turing functional~$\Gamma$,
there is an extension~$d$ of~$c$ forcing~$\Gamma^{G \oplus Z} \neq C$.
\end{lemma}
\begin{proof}
Fix $c = (F, \vec{\sigma})$ and~$\Gamma$.
Let~$\varphi(U)$ be the~$\Sigma^{0,Z}_1$ formula
which holds if there is some~$n \in \omega$
and two sets~$E_0, E_1 \subseteq U$ 
such that~$F \cup E_0$ and~$F \cup E_1$ are both 2-branching,
and $\Gamma^{(F \cup E_0) \oplus Z}(n) \neq \Gamma^{(F \cup E_1) \oplus Z}(n)$.
We have two cases.

\emph{Case 1}. $\varphi(U)$ is essential below $\vec{\sigma}$.
Since~$A$ is $T$-densely $Z$-hyperimmune, there is a finite set~$R \subseteq T \cap [\vec{\sigma}]^{\preceq} \cap \overline{A}$
such that~$\varphi(R)$ holds. Let~$E \subseteq R$
be such that $F \cup E$ is 2-branching and $\Gamma^{(F \cup E) \oplus Z}(n) \neq C(n)$.
Since~$E \subseteq [\vec{\sigma}]^{\preceq} \succeq_T \Leaf(F)$, we can define some~$\vec{\tau} \in \mathbb{P}_T$
such that~$\vec{\tau} \succeq_T \Leaf(F \cup E)$ 
and~$T \cap [\vec{\tau}]^{\preceq} \subseteq T \cap [\vec{\sigma}]^{\preceq}$.
Since~$E \subseteq T \cap \overline{A}$, the pair~$(F \cup E, \vec{\tau})$ is a valid condition
extending~$c$ and forcing~$\Gamma^{G \oplus Z} \neq C$.

\emph{Case 2}. $\varphi(U)$ is not essential below $\vec{\sigma}$. 
Let~$\vec{\tau} \succeq_T \vec{\sigma}$ be such that~$\varphi(R)$ does not hold
for every finite set~$R \subseteq T \cap [\vec{\tau}]^{\preceq}$. The condition~$d = (F, \vec{\tau})$
extends~$c$ and forces~$\Gamma^{G \oplus Z}$ to be either partial, or $Z$-computable.
\end{proof}

Let~$\mathcal{F} = \{c_0, c_1, \dots \}$ be a sufficiently generic filter,
where~$c_s = (F_s, \vec{\sigma}_s)$, and let~$G = \bigcup_s F_s$.
By the definition of a condition, $G \subseteq \overline{A}$.
By Lemma~\ref{lem:tt1-strong-avoidance-comp-ext}, $G \cong 2^{<\omega}$,
and by Lemma~\ref{lem:tt1-strong-avoidance-comp-force}, $C \not \leq_T G \oplus Z$.
This completes the proof of Theorem~\ref{thm:tt1-strong-avoidance-comp}.
\end{proof}

\begin{theorem}\label{thm:tt1-strong-avoidance}
	$\TT^1_2$ admits strong cone avoidance.
\end{theorem}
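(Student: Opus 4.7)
The plan is to derive Theorem~\ref{thm:tt1-strong-avoidance} from Theorem~\ref{thm:tt1-strong-avoidance-comp} via a case analysis on the dense hyperimmunity of the color classes. Fix an arbitrary $f : 2^{<\omega} \to 2$, a set $Z$, and $C \not \leq_T Z$; the goal is to produce an $f$-homogeneous $H \cong 2^{<\omega}$ with $C \not \leq_T H \oplus Z$.

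In the favorable case, one of the color classes $f^{-1}(c)$ is $2^{<\omega}$-densely $Z$-hyperimmune. Here I apply Theorem~\ref{thm:tt1-strong-avoidance-comp} with the trivially $Z$-computable tree $T = 2^{<\omega}$ and $A = f^{-1}(c)$. The conclusion yields $G \subseteq \overline{A} = f^{-1}(1-c)$ with $G \cong 2^{<\omega}$ and $C \not \leq_T G \oplus Z$, which is the desired solution, homogeneous for color~$1-c$.

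The remaining case is when neither $f^{-1}(0)$ nor $f^{-1}(1)$ is $2^{<\omega}$-densely $Z$-hyperimmune. The failure data provides, for each $c \in \{0,1\}$, some $\vec{\sigma}_c \in \mathbb{P}_{2^{<\omega}}$ and an essential $\Sigma^{0,Z}_1$ formula $\varphi_c(U)$ such that every $\varphi_c$-witness $R \subseteq [\vec{\sigma}_c]^{\preceq}$ meets $f^{-1}(c)$; passing to a common extension allows me to assume $\vec{\sigma}_0 = \vec{\sigma}_1 = \vec{\sigma}$. The plan in this case is to extract from $\varphi_0, \varphi_1$ a $Z$-computable perfect subtree $T^* \cong 2^{<\omega}$ lying entirely inside some $f^{-1}(c^*)$. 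Setting $H := T^*$ then gives $H \oplus Z \equiv_T Z$, so $C \not \leq_T H \oplus Z$ follows trivially, and $T^*$ is homogeneous in color $c^*$.

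The main obstacle is the construction of $T^*$ in the second case. The heuristic is that the simultaneous failure of dense hyperimmunity for both colors imposes severe combinatorial constraints. For instance, if the essential formula ``$U$ is a singleton in $[\vec{\tau}]^{\preceq}$'' is one of the failure witnesses, then $[\vec{\tau}]^{\preceq}$ is already monochromatic and $T^*$ is immediate. More generally, if $\varphi_c$ has the shape ``$U$ is an $N$-antichain in $[\vec{\tau}]^{\preceq}$'', then failure for $f^{-1}(c)$ forces $f^{-1}(1-c) \cap [\vec{\tau}]^{\preceq}$ to contain no $N$-antichain, pinning one color to a $Z$-computable cone modulo a combinatorially small exception set. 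The delicate step is to show that for arbitrary essential $\varphi_0, \varphi_1$, the interaction between the two failure conditions always either directly produces such a $Z$-computable monochromatic region or can be recursively refined (replacing $T = 2^{<\omega}$ by a suitably shrunk $Z$-computable perfect subtree) until the favorable case of Theorem~\ref{thm:tt1-strong-avoidance-comp} becomes applicable.
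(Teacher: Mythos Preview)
Your favorable case is fine and matches the paper's use of Theorem~\ref{thm:tt1-strong-avoidance-comp}. The second case, however, is not a proof: you have identified the obstacle but not overcome it. The claim that the joint failure of dense $Z$-hyperimmunity for both color classes yields a $Z$-computable monochromatic perfect subtree (possibly after refinement) is neither proved nor, as stated, correct. The failure witnesses $\varphi_0,\varphi_1$ are arbitrary $\Sigma^{0,Z}_1$ formulas, and your special cases (singletons, antichains) do not generalize; for generic $\varphi_c$ there is no mechanism to extract a $Z$-computable homogeneous region, and the vague ``recursive refinement'' has no termination argument.

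The paper's argument is structurally different in two essential ways. First, the dichotomy is not over the single tree $2^{<\omega}$ but over \emph{all} perfect sets $X$ in a fixed Turing ideal $\mathcal{M}\models\WKL$ with $Z\in\mathcal{M}$ and $C\notin\mathcal{M}$ (obtained via the cone avoidance basis theorem). Either some such $X$ makes $A$ $X$-densely $(X\oplus Z)$-hyperimmune, and Theorem~\ref{thm:tt1-strong-avoidance-comp} applies; or \emph{no} $X\in\mathcal{M}$ does. Second, in the latter case the paper does not look for a $Z$-computable solution at all. Instead it runs a Mathias-style forcing with conditions $(F,X)$, $F\subseteq A$ finite $2$-branching and $X\in\mathcal{M}$ a perfect reservoir. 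The universal failure of dense hyperimmunity (for every reservoir encountered) is exactly what allows one to find new nodes of $A$ above any leaf, and, crucially, to force $\Gamma^{G\oplus Z}\neq C$ via a $\Pi^{0,X\oplus Z}_1$ class of candidate ``point-pickers'' whose members either split the computation (Case~1) or, using $\WKL$ in $\mathcal{M}$, furnish a new reservoir forcing $\Gamma^{G\oplus Z}$ to be partial or $X\oplus Z$-computable (Case~2). The resulting $G\subseteq A$ is generally not $Z$-computable; cone avoidance comes from the forcing, not from effectivity of the solution. Your proposal is missing both the $\WKL$ model and this forcing machinery.
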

\begin{proof}
Fix a set~$Z$, a set~$C \not \leq_T Z$, and a set $A \subseteq 2^{<\omega}$. By the cone avoidance basis theorem of Jockusch and Soare~\cite[Theorem 2.5]{JS-1972a},
there is a Turing ideal~$\mathcal{M} \models \WKL$ such that~$Z \in \mathcal{M}$ and~$C \not \in \mathcal{M}$.
Suppose there is a perfect set~$X \in \mathcal{M}$ such that
$A$ is $X$-densely $X \oplus Z$-hyperimmune. By
Theorem~\ref{thm:tt1-strong-avoidance-comp}, there is a set~$G \subseteq X \cap \overline{A}$
such that $G \cong 2^{<\omega}$ and~$C \not \leq_T G \oplus \tilde{Z}$, so \emph{a fortiori} $C \not \leq_T G \oplus Z$,
in which case we are done. Suppose now there is no such set~$X \in \mathcal{M}$.

We will build our set~$G$ by forcing. Our forcing conditions are Mathias conditions~$(F, X)$,
where~$F \subseteq A$ is a finite 2-branching set, 
$X \in \mathcal{M}$, and $F \cup X$ is a perfect set. The extension is the usual Mathias extension.

\begin{lemma}\label{lem:tt1-strong-avoidance-ext}
For every condition~$c = (F, X)$ and for every leaf~$\sigma$ of~$F$,
there is some extension~$d = (E, Y)$ of~$c$
such that~$E \cap [\sigma]^{\prec} \neq \emptyset$.
\end{lemma}
\begin{proof}
Fix~$c$ and~$\sigma$. Since $F \cup X$ is perfect, 
so is $X_\sigma = X \cap [\sigma]^{\preceq}$.
Moreover, $X_\sigma\in \mathcal{M}$, 
so by assumption, $A$ is not $X_\sigma$-densely $X_\sigma \oplus Z$-hyperimmune.
Unfolding the definition, there is a $\Sigma^{0,X_\sigma \oplus Z}_1$ formula~$\varphi(U)$ 
essential below some~$\vec{\tau} \in \mathbb{P}_{X_\sigma}$,
such that~$R \cap A \neq \emptyset$ whenever $\varphi(R)$ holds and $R \subseteq X \cap [\vec{\tau}]^{\preceq}$.
Let~$\vec{\xi}_0 \succeq_{X_\sigma} \vec{\tau}$ and~$\vec{\xi}_1 \succeq_{X_\sigma} \vec{\tau}$
be such that~$[\vec{\xi}_0]^{\preceq} \cap [\vec{\xi}_1]^{\preceq} = \emptyset$.
By essentiality of~$\varphi(U)$, there are two finite sets $R_0 \subseteq X \cap [\vec{\xi}_0]^{\preceq}$
and~$R_1 \subseteq X \cap [\vec{\xi}_1]^{\preceq}$ such that~$\varphi(R_0)$ and~$\varphi(R_1)$ hold.
In particular, we can pick some~$\rho_0 \in R_0 \cap A$ and~$\rho_1 \in R_1 \cap A$.
Note that $\rho_0, \rho_1 \in X \cap [\vec{\tau}]^{\preceq} \subseteq X \cap [\sigma]^{\preceq}$,
and therefore $F \cup \{\rho_0, \rho_1\}$ is 2-branching.
Let~$Y \subseteq X$ be obtained by removing finitely many elements, so that $(F \cup \{\rho_1, \rho_1\}, Y)$ is a Mathias condition.
Since $\rho_0, \rho_1 \in X$ and $F \cup X$ is perfect, so is $F \cup \{\rho_0, \rho_1\} \cup Y$.
Therefore, the condition~$d = (F \cup \{\rho_0, \rho_1\}, Y)$ is the desired extension of~$c$.
\end{proof}

We say a set~$G$ \emph{satisfies} $c = (F, X)$ if $G \cong 2^{<\omega}$, $G \subseteq A$,
and $G$ satisfies $c$ as a Mathias condition.

\begin{lemma}\label{lem:tt1-strong-avoidance-force}
For every condition~$c$ and every Turing functional~$\Gamma$,
there is an extension~$d$ of~$c$ forcing~$\Gamma^{G \oplus Z} \neq C$.
\end{lemma}
\begin{proof}
Fix $c = (F, X)$ and~$\Gamma$.
For every~$\sigma \in \Leaf(F)$, the set $X_\sigma = X \cap [\sigma]^{\preceq}$ is perfect 
and belongs to $\mathcal{M}$, so by assumption, $A$ is not $X_\sigma$-densely $X_\sigma \oplus Z$-hyperimmune.
Unfolding the definition, there is a $\Sigma^{0,X_\sigma \oplus Z}_1$ formula~$\varphi_\sigma(U)$ 
essential below some~$\vec{\tau}_\sigma \in \mathbb{P}_{X_\sigma}$,
such that~$R \cap A \neq \emptyset$ whenever $\varphi_\sigma(R)$ holds and $R \subseteq X_\sigma \cap [\vec{\tau}_\sigma]^{\preceq}$.
Fix a $X \oplus Z$-computable enumeration~$\vec{\xi}_0, \vec{\xi}_1, \dots$ of all $\vec{\xi} \in \mathbb{P}_{X_\sigma}$
such that~$\vec{\xi} \succeq_{X_\sigma} \vec{\tau}_\sigma$. For each~$i \in \omega$,
let~$R_i \subseteq X \cap [\vec{\xi}_i]^{\preceq}$ be such that $\varphi_\sigma(R_i)$ holds.
Note that~$R_i \cap A \neq \emptyset$ by assumption.
Define the $\Pi^{0,X \oplus Z}_1$ class~$\mathcal{C}_\sigma$ of all $P \in X^{\omega}$
such that $(\forall i \in \omega)P(i) \in R_i$. In particular,
by choice of the~$R$'s, there is some~$P \in \mathcal{C}_\sigma$
such that~$C(P) \subseteq A$. Moreover, by the usual pairing argument,
for every $P \in \mathcal{C}_\sigma$, there is some~$\tau \in \vec{\tau}_\sigma$ such that~$\ran(P)$ is dense in~$X \cap [\tau]^{\preceq}$.

Let~$\mathcal{C}$ be the $\Pi^{0,X \oplus Z}_1$ class of all $\seq{P_\sigma : \sigma \in \Leaf(F)}$,
where~$P_I \in \mathcal{C}_I$ such that, for every pair of finite sets 
$E_0, E_1 \subseteq \bigcup_{\sigma \in \Leaf(F)} \ran(P_\sigma)$
with~$F \cup E_0$ and~$F \cup E_1$ both 2-branching, 
it is not the case that $\Gamma^{(F \cup E_0) \oplus Z} \downarrow = \Gamma^{(F \cup E_1) \oplus Z} \downarrow$.
We have two cases.

\emph{Case 1}. $\mathcal{C}$ is empty. For each~$\sigma \in \Leaf(F)$,
let~$P_\sigma \in \mathcal{C}_\sigma$ be such that~$\ran(P_\sigma) \subseteq A$.
In particular, $\seq{P_\sigma : \sigma \in \Leaf(F)} \not \in \mathcal{C}$,
so by definition of~$\mathcal{C}$, there is a finite set $E \subseteq \bigcup_{\sigma \in \Leaf(F)} \ran(P_\sigma)$
such that~$F \cup E$ is 2-branching,
and some~$n \in \omega$ such that $\Gamma^{(F \cup E) \oplus Z}(n) \neq C(n)$.
Let~$Y \subseteq X$ be obtained by removing finitely elements, so that $(F \cup E, Y)$ is a valid Mathias condition.
Since $E \subseteq X$ and $F \cup X$ is perfect, so is $F \cup E \cup Y$.
The condition~$d = (F \cup E, Y)$ is an extension of~$c$
forcing~$\Gamma^{G \oplus Z} \neq C$.

\emph{Case 2}. $\mathcal{C}$ is non-empty. Since $\mathcal{M} \models \WKL$,
there is some~$\seq{P_\sigma : \sigma \in \Leaf(F)} \in \mathcal{C} \cap \mathcal{M}$.
For each~$\sigma \in \Leaf(F)$, let~$\tau_\sigma \in \vec{\tau}_\sigma$
be such that $\ran(P_\sigma)$ is dense in $X \cap [\tau_\sigma]^{\preceq}$.
Let~$Y$ be obtained by $\vec{P} \oplus Z$-computably thinning out the set~$X$
so that $X \cap [\tau_\sigma]^{\preceq}$ is perfect for each~$\sigma \in \Leaf(F)$.
Note that~$Y \in \mathcal{M}$.
The condition~$d = (F, Y)$ is an extension of~$c$ forcing $\Gamma^{G \oplus Z}$
to be either partial, or $X \oplus Z$-computable.
\end{proof}

Let~$\mathcal{F} = \{c_0, c_1, \dots \}$ be a sufficiently generic filter,
where~$c_s = (F_s, X_s)$, and let~$G = \bigcup_s F_s$.
By the definition of a condition, $G \subseteq A$.
By Lemma~\ref{lem:tt1-strong-avoidance-ext}, $G \cong 2^{<\omega}$,
and by Lemma~\ref{lem:tt1-strong-avoidance-force}, $C \not \leq_T G \oplus Z$.
This completes the proof of Theorem~\ref{thm:tt1-strong-avoidance}.
\end{proof}

For general interest, we note the following immediate consequence of Theorem \ref{thm:tt1-strong-avoidance}, which may be considered a first step in the direction of proving our main theorem. However, in the proof of the main theorem, we will actually need the full version of Theorem \ref{thm:tt1-strong-avoidance} rather than merely this corollary.

\begin{corollary}\label{cor:stt22coneavoid}
	$\STT^2_2$ admits cone avoidance. Hence, $\RCA_0 \nvdash \STT^2_2 \to \ACA_0$.
\end{corollary}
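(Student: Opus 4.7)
The plan is to reduce the corollary to Theorem~\ref{thm:tt1-strong-avoidance} via a standard limit-coloring argument, together with a Mathias-style thinning step that is done computably in the output.

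Fix $Z$, $C \not\leq_T Z$, and a $Z$-computable stable coloring $f:[2^{<\omega}]^2\to 2$. First I would pass to the limit coloring $g:2^{<\omega}\to 2$ defined by $g(\sigma) = \lim_{\tau\succ\sigma,\,|\tau|\to\infty} f(\sigma,\tau)$, which is well-defined by stability. The point of using \emph{strong} rather than ordinary cone avoidance is precisely that this $g$ is only guaranteed to be $\Sigma^{0,Z}_2$, not $Z$-computable. Applying Theorem~\ref{thm:tt1-strong-avoidance} to the arbitrary instance $g$ with side parameter $Z$ and cone $C$ yields a $g$-homogeneous set $H\cong 2^{<\omega}$ with $C\not\leq_T H\oplus Z$. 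Say $g(\sigma)=c$ for all $\sigma\in H$.

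The remaining task is to shrink $H$ to a set $H'\cong 2^{<\omega}$ on which $f$ is genuinely $c$-homogeneous (not merely $c$-homogeneous in the limit). I would build $H'$ by a straightforward $(H\oplus Z)$-computable recursion, placing the root of $H$ into $H'$ and, at each stage, for each current leaf $\sigma$ of the finite portion of $H'$ built so far, searching for two $H$-incomparable extensions $\tau_0,\tau_1\succ\sigma$ in $H$ such that $f(\sigma',\tau_j)=c$ for every ancestor $\sigma'$ of $\tau_j$ already in $H'$ and every $j\in\{0,1\}$. Such a pair exists because $H$ is perfect and 2-branching and because stability supplies, for each of the finitely many relevant ancestors $\sigma'$, a level beyond which every $\tau\succ\sigma'$ in $H$ colors to $c$; taking the maximum of these levels, any pair of $H$-incomparable extensions above it works. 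The search therefore terminates, so the construction is $H\oplus Z$-computable, which gives $H'\leq_T H\oplus Z$ and hence $C\not\leq_T H'\oplus Z$.

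The set $H'$ so produced is a solution to $f$ avoiding the cone above $C$, which establishes that $\STT^2_2$ admits cone avoidance. The nonimplication $\RCA_0\nvdash\STT^2_2\to\ACA_0$ then follows immediately from Lemma~\ref{lem:coneavoidtononimplication}. There is no real obstacle here once Theorem~\ref{thm:tt1-strong-avoidance} is in hand; the only subtle point is the appeal to \emph{strong} cone avoidance for $\TT^1_2$, without which the reduction through the limit coloring $g$ would fail since $g$ need not be $Z$-computable.
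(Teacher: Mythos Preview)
Your proposal is correct and follows essentially the same approach as the paper: define the limit coloring (equivalently, the partition $A_0 \cup A_1 = 2^{<\omega}$ into limit-$0$ and limit-$1$ nodes), apply strong cone avoidance for $\TT^1_2$ to obtain a homogeneous $H \cong 2^{<\omega}$ with $C \not\leq_T H \oplus Z$, and then $(H \oplus Z)$-computably thin $H$ to an $f$-homogeneous subtree. Your write-up is in fact more explicit than the paper's about the thinning step, and your emphasis on why \emph{strong} cone avoidance is needed is exactly the right point.
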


\begin{proof}
	Consider any set $Z$, any $Z$-computable stable $f : [2^{<\omega}]^2 \to 2$, and any $C \nleq_T Z$. For each $c < 2$, let $A_c$ be the set of all $\sigma \in 2^{<\omega}$ such that $(\exists n)(\forall \tau)[(\tau \succeq \sigma \wedge |\tau| \geq n) \to f(\sigma,\tau) = c$. By stability of $f$, $A_0 = \overline{A_1}$. By Theorem \ref{thm:tt1-strong-avoidance}, there exists a $c < 2$ and a $G\subseteq A_c$ such that $G \cong 2^{<\omega}$ and $C \nleq_T G \oplus Z$. Now $G$ can be $(G \oplus Z)$-computably pruned to $H \subseteq G$ such that $H \cong 2^{<\omega}$ and $f(\sigma,\tau) = c$ for all $\seq{\sigma,\tau} \in [H]^2$. And we have $H \oplus Z \leq_T G \oplus Z$, hence $C \nleq_T H \oplus Z$. The rest of the corollary now follows by Lemma \ref{lem:coneavoidtononimplication}.
\end{proof}


\section{The tree theorem for pairs and cone avoidance}\label{S:TT2}

Our goal in this section is to prove our main theorem, which we will do in the following more specific form.

\begin{theorem}\label{thm:tt22-cone-avoidance}
$\TT^2_2$ admits cone avoidance. Hence,~$\RCA_0 \nvdash \TT^2_2 \to \ACA_0$.
\end{theorem}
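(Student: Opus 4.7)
The plan is to follow the stable/cohesive decomposition $\TT^2_2 \iff \STT^2_2 + \CTT^2_2$ stated before Proposition 2.12, exactly paralleling how cone avoidance of $\RT^2_2$ is customarily established from cone avoidance of $\SRT^2_2$ and $\COH$. Cone avoidance for $\STT^2_2$ has already been recorded in Corollary \ref{cor:stt22coneavoid} as a consequence of Theorem \ref{thm:tt1-strong-avoidance} (strong cone avoidance for $\TT^1_2$). So the task reduces to proving cone avoidance for $\CTT^2_2$: given $Z$, a $Z$-computable $f:[2^{<\omega}]^2\to 2$, and $C\nleq_T Z$, build $H\cong 2^{<\omega}$ over which $f$ is stable and with $C\nleq_T H\oplus Z$. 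Once both halves are proved, we iterate: start from $f$, apply $\CTT^2_2$ cone avoidance to obtain $H_0\cong 2^{<\omega}$ over which $f$ is stable; the restriction of $f$ to $[H_0]^2$ is then a stable instance of $\STT^2_2$, computable in $H_0\oplus Z$; apply Corollary \ref{cor:stt22coneavoid} relativised to $H_0\oplus Z$ (with the same cone $C$) to extract a homogeneous $H\cong 2^{<\omega}$ inside $H_0$ avoiding $C$. Lemma \ref{lem:coneavoidtononimplication} then delivers $\RCA_0\nvdash\TT^2_2\to\ACA_0$.

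The substantive work is therefore the cone avoidance argument for $\CTT^2_2$. My approach is a tree-Mathias forcing in the style of Section \ref{S:TT1}: conditions are pairs $(F,X)$ where $F\subseteq 2^{<\omega}$ is finite and 2-branching, $X$ is an infinite perfect $Z$-c.e.\ (or $\WKL$-definable, in an ideal $\Mcal\models\WKL$ containing $Z$ but not $C$) subset of $2^{<\omega}$ with $\Leaf(F)\prec X$, and, crucially, every node of $F$ is \emph{decided}, i.e., $f(\sigma,\tau)$ is constant for $\sigma\in F$ and $\tau\in X\cap[\sigma]^{\preceq}$. Extension refines $X$ and grows $F$ into $X$, analogously to the conditions in Theorem \ref{thm:tt1-strong-avoidance}. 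Two density lemmas drive the construction. First, an \emph{extension lemma}: for every condition $c=(F,X)$ and every leaf $\sigma$ of $F$, there is an extension $d=(E,Y)$ with $E\cap[\sigma]^{\prec}\neq\emptyset$; this is obtained by applying $\TT^1_2$-style strong cone avoidance (Theorem \ref{thm:tt1-strong-avoidance}) to the colouring $\tau\mapsto f(\xi,\tau)$ on $X\cap[\sigma]^{\preceq}$ for each candidate $\xi$, and then perfecting inside the resulting large homogeneous subtree so that each new leaf of $E$ remains decided. Iterating the extension lemma along a sufficiently generic filter builds $G=\bigcup_s F_s\cong 2^{<\omega}$ over which $f$ is stable by construction.

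Second, a \emph{forcing lemma}: for every condition $c$ and every Turing functional $\Gamma$ there is an extension forcing $\Gamma^{G\oplus Z}\neq C$. As in Lemma \ref{lem:tt1-strong-avoidance-force}, we split on whether the $\Sigma^{0,X\oplus Z}_1$ formula asserting the existence of two 2-branching extensions of $F$ producing disagreeing computations of $\Gamma$ on some input $n$ is ``essential'' below some tuple in $\mathbb{P}_X$. In the essential case, a density / hyperimmunity witness gives $E\subseteq X$ for which $\Gamma^{(F\cup E)\oplus Z}(n)\neq C(n)$, and we thin $X$ to a perfect reservoir $Y\in\Mcal$ on which all new leaves of $F\cup E$ remain decided (again using Theorem \ref{thm:tt1-strong-avoidance} leaf by leaf). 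In the non-essential case, we pass to a reservoir $Y\subseteq X$ in $\Mcal$ on which the above formula fails everywhere, so that $\Gamma^{G\oplus Z}$ is forced to be partial or $(Y\oplus Z)$-computable, which cannot compute $C$ since $Y\in\Mcal\not\ni C$. A $\WKL$ compactness step inside $\Mcal$, analogous to Case~2 of Lemma \ref{lem:tt1-strong-avoidance-force}, lets us simultaneously preserve decidedness for every $\sigma\in\Leaf(F)$ while staying inside $\Mcal$.

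The main obstacle is unquestionably the extension lemma for $\CTT^2_2$: maintaining ``every leaf of $F$ is decided'' while growing $F$ forces us to homogenise, simultaneously and inside a single perfect reservoir, all the restricted colourings $f(\sigma,\cdot)$ for $\sigma\in\Leaf(F)$, without losing either the perfectness of the reservoir or the ability to avoid the cone of $C$. This is exactly the step where the tree setting departs sharply from the linear one: in the linear case one simply intersects reservoirs across finitely many leaves, whereas here the intersection of perfect sets need not be perfect. Theorem \ref{thm:tt1-strong-avoidance} is tailored to exactly this difficulty, producing perfect $\cong 2^{<\omega}$ homogeneous subtrees rather than merely infinite homogeneous sets, which is precisely what is needed to iterate the construction without collapsing the reservoir.
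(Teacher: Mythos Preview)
Your high-level plan---decompose $\TT^2_2$ as $\CTT^2_2 + \STT^2_2$, establish cone avoidance for each, and compose---is exactly the paper's, and your setup of the $\CTT^2_2$ forcing (Mathias-style conditions $(F,X)$ in which every $\sigma\in F$ is \emph{decided}, i.e., $f(\sigma,\cdot)$ is constant on $X\cap[\sigma]^{\prec}$) matches Theorem~\ref{thm:ctt22-cone-avoidance}. The gap is in your forcing lemma, and relatedly in your diagnosis of where the difficulty lies.

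In your ``essential'' case you propose to first locate a finite $E\subseteq X$ with $\Gamma^{(F\cup E)\oplus Z}(n)\neq C(n)$ and \emph{then} thin the reservoir, leaf by leaf via Theorem~\ref{thm:tt1-strong-avoidance}, to make the new nodes decided. This fails: a $2$-branching $E$ will typically contain a chain $\xi_0\prec\xi_1\prec\cdots\prec\xi_r$, and homogenising $f(\xi_0,\cdot)$ below $\xi_0$ may throw $\xi_1,\ldots,\xi_r$ out of the reservoir; there is no post-hoc way to decide $\xi_0$ while retaining the rest of $E$. Your ``non-essential'' case has a parallel problem: passing to $X\cap[\vec\tau]^{\preceq}$ for some $\vec\tau\succeq_X\Leaf(F)$ leaves only one branch above each leaf, so $F\cup Y$ is no longer perfect. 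Contrary to your final paragraph, the extension lemma is \emph{not} the hard step---it is handled by picking three incomparable successors, applying the effective $\TT^1_2$ to each, and pigeonholing (Lemma~\ref{lem:ctt22-cone-avoidance-ext}); the real obstacle is to diagonalise and secure decidedness \emph{simultaneously}. The paper's solution is the bushy-tree machinery of Lemmas~\ref{lem:smallness-additivity}--\ref{lem:infinite-branching-combi}: above each leaf $\xi\in\Leaf(F)$ one first passes to an $h_{2,2}$-branching perfect $X_\xi$, and Lemma~\ref{lem:infinite-branching-combi} yields, for any level $n$, a $2$-branching $S_\xi\lhd X_\xi\res n$ together with a $Z$-computable reservoir $Y_\xi$ with respect to which \emph{every} node of $S_\xi$ is already decided. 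One then forms the $\Pi^{0,X\oplus Z}_1$ class of tuples $\langle S_\xi\rangle$ of $2$-branching perfect subsets admitting no disagreeing computations: if it is empty, compactness gives an $n$ at which every such tuple at level $n$ carries a diagonalising $E$, and one chooses the pre-decided $S_\xi$ from Lemma~\ref{lem:infinite-branching-combi}; if nonempty, the cone-avoidance basis theorem supplies a member avoiding $C$ to shrink to. Note also that Theorem~\ref{thm:tt1-strong-avoidance} is not the tool for the internal $\CTT^2_2$ construction---the colourings $\tau\mapsto f(\xi,\tau)$ are $Z$-computable, so only effective $\TT^1_2$ is needed there---strong cone avoidance is invoked only at the very end, on the $\Delta^{0,H_0\oplus Z}_2$ limit colouring once stability over $H_0$ has been obtained.
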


\noindent In order to prove the theorem,
we need to introduce an adaptation of the bushy tree forcing framework. Bushy tree forcing was developed by Kumabe \cite{Kumabe-1996} and Kumabe and Lewis \cite{KL-2009} and has been employed to prove a number of results in algorithmic randomness and classical computability theory, particularly to do with diagonally-noncomputable functions. We refer the reader to Khan and Miller \cite{KM-TA} for a survey on some of these results, along with a primer on bushy tree forcing as it is used to prove them.

The use of this forcing for the purposes of studying combinatorial principles like Ramsey's theorem is more recent, with some early examples by Patey \cite{MR3416155, Patey-TA, Patey-TA4}. Our treatment here will be self-contained.

\begin{definition}
Given two sets~$T, S \subseteq 2^{<\omega}$,
we write~$S \lhd T$ if~$S \subseteq T$ and whenever~$\tau \in S$
and~$\sigma$ is a proper initial segment of~$\tau$ in~$T$, then~$\sigma \in S$.
We say that a set~$B \subseteq T$ is \emph{$h$-big in~$T$} for some function~$h$ if
there is an~$h$-branching set~$S \lhd T$ such that~$\Leaf(S) \subseteq B$.
\end{definition}

In particular, if~$S \lhd T$, then any node at level~$n$ in~$S$ is at level~$n$ in~$T$.
Also note that relation~$\lhd$ is transitive.
The following lemma is a standard combinatorial fact about bushy tree forcing (see \cite{KM-TA}, Lemma 2.4). As our framework is slightly different from the general one, we provide a proof for the sake of completeness.

\begin{lemma}
\label{lem:smallness-additivity}
Fix~$T \subseteq 2^{<\omega}$.
If~$B \cup C$ is~$(h+g-1)$-big in~$T$, then either~$B$ is~$h$-big in~$T$ or~$C$ is~$g$-big in~$T$.
\end{lemma}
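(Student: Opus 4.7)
The plan is to argue by induction on the height of the witnessing bushy tree, with a simple pigeonhole step at the root. Precisely, given an $(h+g-1)$-branching set $S \lhd T$ with $\Leaf(S) \subseteq B \cup C$, I will produce either an $h$-branching $S_B \lhd T$ with $\Leaf(S_B) \subseteq B$ or a $g$-branching $S_C \lhd T$ with $\Leaf(S_C) \subseteq C$. The induction will be on the maximum level (in $T$) of a leaf of $S$, and the statement I will induct will be parametrized by the starting level, so that restricting to a subtree rooted one level deeper simply shifts $h$ and $g$.

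In the base case, the root $\rho$ of $S$ is itself a leaf, so $\rho \in B \cup C$. If $\rho \in B$ then $\{\rho\}$ is $h$-branching (trivially, since the only node is a leaf) and witnesses that $B$ is $h$-big in $T$; otherwise $\rho \in C$ and symmetrically $C$ is $g$-big. For the inductive step, the root $\rho$ has exactly $h(0)+g(0)-1$ immediate successors $\sigma_1,\dots,\sigma_{h(0)+g(0)-1}$ in $S$. For each $i$, let $S_i = \{\tau \in S : \tau \succeq \sigma_i\}$. Since $\lhd$ is transitive, $S_i \lhd T$; and $S_i$ is $(h'+g'-1)$-branching with $h'(n) = h(n+1)$, $g'(n) = g(n+1)$. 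By the induction hypothesis applied to $S_i$, either there is an $h'$-branching $S_i' \lhd T$ with $\Leaf(S_i') \subseteq B$, or there is a $g'$-branching $S_i'' \lhd T$ with $\Leaf(S_i'') \subseteq C$.

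Now apply pigeonhole: among the $h(0)+g(0)-1$ indices $i$, either at least $h(0)$ of them fall in the first case, or at least $g(0)$ of them fall in the second (if fewer than $h(0)$ are in the first, then at least $(h(0)+g(0)-1)-(h(0)-1) = g(0)$ are in the second). In the first case, pick $h(0)$ indices $i_1,\dots,i_{h(0)}$ of that type and form $S_B = \{\rho\} \cup S_{i_1}' \cup \cdots \cup S_{i_{h(0)}}'$. Then $S_B \lhd T$, its root $\rho$ has exactly $h(0)$ immediate successors (namely $\sigma_{i_1},\dots,\sigma_{i_{h(0)}}$, which are the roots of the $S_{i_j}'$), and each $S_{i_j}'$ contributes the required $h'(n) = h(n+1)$ successors at deeper levels; hence $S_B$ is $h$-branching with $\Leaf(S_B) \subseteq B$. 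The symmetric construction handles the second case.

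The combinatorial content is thus just pigeonhole at the root, and I expect the main obstacle to be purely notational: keeping track of levels in $T$ versus in the witness subtrees under the shift $h \mapsto h(\,\cdot\,+1)$, and verifying that the glued-together tree $S_B$ genuinely satisfies $S_B \lhd T$ and is $h$-branching at every level (not merely at the root). Both are routine once one observes that $\lhd$ is transitive and that levels in any $S' \lhd T$ coincide with levels in $T$.
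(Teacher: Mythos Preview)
Your overall strategy is the same idea as the paper's---pigeonhole at each node---just organized as an explicit induction rather than a bottom-up labeling. However, there is a concrete error in your induction setup: the claim ``since $\lhd$ is transitive, $S_i \lhd T$'' is false. By definition, $S_i \lhd T$ would require every $T$-predecessor of a node of $S_i$ to lie in $S_i$; but $\rho \in T$ is a proper $T$-predecessor of every $\tau \in S_i$ and $\rho \notin S_i$. (Transitivity does not help, since $S_i \lhd S$ fails for the same reason.) As a consequence, $S_i$ does not witness any bigness \emph{in $T$}, your induction hypothesis as stated does not apply, and the $S_{i_j}'$ you obtain would be rooted at a root of $T$, not at $\sigma_{i_j}$, so the gluing step breaks too.

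The fix is easy and is exactly the reparametrization you allude to but do not carry out: work relative to $T_i = T \cap [\sigma_i]^{\preceq}$ rather than $T$. Then $S_i \lhd T_i$, levels in $T_i$ are levels in $T$ shifted by one (so your $h',g'$ are correct), and the induction hypothesis applied in $T_i$ yields $S_{i_j}' \lhd T_{i_j}$ with root $\sigma_{i_j}$. Since $\sigma_{i_j}$ is a successor of $\rho$ in $T$ (because it is one in $S$ and $S \lhd T$), the glued set $\{\rho\} \cup \bigcup_j S_{i_j}'$ is then genuinely $h$-branching and $\lhd T$. The paper avoids all of this bookkeeping by never changing the ambient tree: it labels the nodes of the single witness $S$ from the leaves up (a leaf gets label $B$ or $C$ by membership; an internal node at level $n$ gets label $B$ if at least $h(n)$ of its successors carry label $B$, else label $C$), and then extracts $S' \lhd S \lhd T$ top-down by following the root's label. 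That presentation sidesteps the level-shift and the change of ambient tree entirely.
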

\begin{proof}
Fix an~$(h+g-1)$-branching set~$S \lhd T$ such that~$\Leaf(S) \subseteq B \cup C$. We label each~$\sigma \in S$ by either~$B$ or~$C$, as follows. Label each~$\sigma \in \Leaf(S)$ by~$B$ if~$\sigma \in B$, and by~$C$ if~$\sigma \notin B$ (in which case, of course,~$\sigma \in C$). Now fix~$\sigma \in S \setminus \Leaf{S}$, and assume by induction that every successor of~$\sigma$ in~$S$ has already been labeled. Say~$\sigma$ is at level~$n$. If at least~$h(n)$ successors of~$\sigma$ are labeled by~$B$, then label~$\sigma$ by~$B$ as well. Otherwise, label~$\sigma$ by~$C$, and notice that as~$S$ is~$(h+g-1)$-branching, this means at least~$g(n)$ successors of~$\sigma$ in~$S$ are labeled by~$C$. This completes the labeling. We now define~$S' \lhd T$ as follows. Add the root of~$S$ to~$S'$. Having added~$\sigma$ to~$S'$, add to~$S'$ either the least~$h(n)$ successors of~$\sigma$ labeled by~$B$ or the least~$g(n)$ successors labeled by~$C$, depending as~$\sigma$ is itself labeled~$B$ or~$C$, respectively. Then~$S'$ witnesses that either~$B$ is~$h$-big in~$T$ or~$C$ is~$g$-big in~$T$, as desired.
\end{proof}

For every~$k, p$, let~$h_{k,p}$ be the function defined by induction over~$n$ by
$h_{k,p}(0) = kp-1$ and~$h_{k,p}(n+1) = h_{k,kp-1}(n)$.
The function~$h_{k,p}$ has been designed so that it satisfies the following combinatorial lemma.

\begin{lemma}\label{lem:finite-branching-combi}
Fix some~$k, p \geq 1$. Let~$T \subseteq 2^{<\omega}$ be an~$h_{k,p}$-branching set at level~$n$,
and~$g : [T]^2 \to k$ be a coloring.
There is a~$p$-branching set~$S \lhd T$ at level~$n$ such that
\[
(\forall \sigma \in S \setminus \Leaf(S))(\exists c < k)(\forall \tau \in \Leaf(S))[\sigma \prec \tau \rightarrow g(\sigma, \tau) = c].
\]
\end{lemma}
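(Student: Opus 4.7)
The plan is to induct on $n$, uniformly in $k, p$. The recursion $h_{k,p}(n+1) = h_{k, kp-1}(n)$ is engineered so that the subtree of $T$ below any immediate successor of the root, viewed as having that successor at level $0$, is $h_{k, kp-1}$-branching at level $n$, making the inductive hypothesis applicable with parameter $kp-1$ in place of $p$. The case $k = 1$ is trivial (every coloring is constant and the root of $T$ has no condition to satisfy), so we may assume $k \geq 2$.

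For the base case $n = 0$, $T$ consists only of its root and $S = T$ satisfies the conclusion vacuously. For the inductive step, assume the claim at level $n$, and let $T$ be $h_{k,p}$-branching at level $n+1$ with root $\rho$ and immediate successors $\rho_1, \dots, \rho_{kp-1}$. For each $i$, the subtree $T_i = T \cap [\rho_i]^{\preceq}$ is $h_{k, kp-1}$-branching at level $n$, so the inductive hypothesis with parameters $k$ and $kp-1$ yields a $(kp-1)$-branching set $S_i \lhd T_i$ at level $n$ satisfying the internal homogeneity property.

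Next, for each $i$, I would partition $\Leaf(S_i)$ into color classes $B^i_0, \dots, B^i_{k-1}$ via $g(\rho,\cdot)$. Since $\Leaf(S_i)$ is trivially $(kp-1)$-big in $S_i$ and $kp-1 \geq k(p-1) + 1$, iterating Lemma~\ref{lem:smallness-additivity} yields a color $c_i < k$ such that $B^i_{c_i}$ is $p$-big in $S_i$. By pigeonhole among the $kp-1$ values $c_i$, some fixed color $c$ occurs for at least $\lceil (kp-1)/k \rceil = p$ indices, say $i_1, \dots, i_p$. For each $j$, I choose a $p$-branching set $S'_{i_j} \lhd S_{i_j}$ at level $n$ witnessing the $p$-bigness of $B^{i_j}_c$, so every leaf $\tau$ of $S'_{i_j}$ satisfies $g(\rho, \tau) = c$. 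Finally, set $S = \{\rho\} \cup \bigcup_{j=1}^{p} S'_{i_j}$.

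Verifying that $S$ works is mostly bookkeeping: $S \lhd T$ by transitivity of $\lhd$, and $S$ is $p$-branching at level $n+1$ because $\rho$ has exactly the $p$ successors $\rho_{i_1},\dots,\rho_{i_p}$ in $S$ and each $S'_{i_j}$ is $p$-branching at level $n$. The root $\rho$ is witnessed by the color $c$ by construction, and homogeneity at an internal node $\sigma$ of some $S'_{i_j}$ is inherited from $S_{i_j}$ because $\Leaf(S'_{i_j}) \subseteq \Leaf(S_{i_j})$ (both are at level $n$ of $T_{i_j}$ with $S'_{i_j} \lhd S_{i_j}$). The delicate point, and the only real content of the argument, is balancing the two losses of a factor of $k$ — one from extracting a single color class via Lemma~\ref{lem:smallness-additivity}, and one from pigeonhole on the chosen colors at the root — which is precisely what the definition of $h_{k,p}$ is designed to absorb into a single step of the induction.
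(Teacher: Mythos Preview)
Your proof is correct and follows essentially the same inductive strategy as the paper: induct on~$n$, apply the inductive hypothesis with parameter~$kp-1$ to each subtree below a level-$1$ node, then thin using the additivity lemma. The only organizational difference is that the paper assembles the~$S_i$'s into a single $(kp-1)$-branching tree $T_1 = \{\rho\} \cup \bigcup_i S_i$ and applies Lemma~\ref{lem:smallness-additivity} once to the leaf coloring $\tau \mapsto g(\rho,\tau)$ on all of~$T_1$, obtaining a $p$-branching $S \lhd T_1$ in one stroke; you instead apply the additivity lemma inside each~$S_i$ separately and then pigeonhole on the resulting colors~$c_i$ at the root. Both arrangements absorb the same two factors of~$k$, so this is a stylistic rather than a substantive difference.
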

\begin{proof}
By induction over~$n$. The case~$n = 0$ is vacuously true.
Let~$T \subseteq 2^{<\omega}$ be an~$h_{k,p}$-branching set at level~$n+1$
and~$g : [T]^2 \to k$ be a coloring. For each node~$\xi$ at level 1 in~$T$,
let~$T_\xi = T \cap [\xi]^{\preceq}$ and~$g_\xi : [T_\xi]^2 \to k$
be the restriction of~$g$ over~$T_\xi$. Note that every node at level~$m$ in~$T_\xi$
is at level~$m+1$ in~$T$. In particular, if it is not a leaf in~$T_\xi$, then
it has~$h_{k,p}(m+1) = h_{k,kp-1}(m)$ immediate successors in~$T_\xi$.
Therefore,~$T_\xi$ is a~$h_{k, kp-1}$-branching set at level~$n$.
By induction hypothesis, there is a~$(kp-1)$-branching set~$S_\xi \lhd T_\xi$ at level~$n$
such that 
\[
(\forall \sigma \in S_\xi \setminus \Leaf(S_\xi))(\exists c < k)
	(\forall \rho \in \Leaf(S_\xi))[\sigma \prec \rho \rightarrow g(\sigma, \rho) = c].
\]
Note that since~$T$ is~$h_{k,p}$-branching, there are~$h_{k,p}(0) = kp-1$ nodes at level 1 in~$T$,
so~$T_1 = \{\epsilon\} \cup \bigcup_{\xi} S_\xi$ is a~$(kp-1)$-branching set at level~$n+1$,
where~$\epsilon$ is the root of~$T$. Moreover,~$T_1 \lhd T$. 
For each~$i < k$,~$B_i = \{\tau \in \Leaf(T_1) : g(\epsilon, \tau) = i\}$.
The set~$B_0 \cup \dots \cup B_{k-1}$ is~$(kp-1)$-big in~$T_1$, so by Lemma~\ref{lem:smallness-additivity}, 
there is some~$i < k$ such that~$B_i$ is~$p$-big in~$T_1$.
Then by definition of~$p$-bigness of~$B$, there is a~$p$-branching set~$S \lhd T_1 \lhd T$
such that~$\Leaf(S) \subseteq B_i$. We claim that~$S$ satisfies the desired property.
Fix some~$\sigma \in S \setminus \Leaf(S)$. We have two cases.

In the first case,~$\sigma = \epsilon$. Let~$c = i$.
Since~$\Leaf(S) \subseteq B_i$, then for each~$\tau \in \Leaf(S)$ such that~$\sigma \prec \tau$,
$f(\epsilon, \tau) = i = c$.

In the second case,~$\sigma \in T_\xi$ for some~$\xi$ at level 1 in~$T$.
Let~$c < k$ be such that~$(\forall \rho \in \Leaf(S_\xi))[\sigma \prec \rho \rightarrow g(\sigma, \rho) = c$.
This color exists by definition of~$S_\xi$. For every~$\rho \in \Leaf(S)$ such that~$\sigma \prec \rho$,
$\rho \in S_\xi$, so~$f(\sigma, \rho) = c$.
This completes the proof of Lemma~\ref{lem:finite-branching-combi}.
\end{proof}

\begin{lemma}\label{lem:infinite-branching-combi}
Fix some~$k \geq 1$, a set~$Z$, a~$Z$-computable~$h_{k,2}$-branching perfect set~$T \subseteq 2^{<\omega}$
and a~$Z$-computable coloring~$f : [2^{<\omega}]^2 \to k$.
For every~$n$, there is a~$2$-branching set~$S \lhd T \res n$ at level~$n$
and a~$Z$-computable set~$X \subseteq T$ such that~$S \cup X$ is perfect and
\[
(\forall \sigma \in S)(\exists c < k)(\forall \rho \in X)[\sigma \prec \rho \rightarrow f(\sigma, \rho) = c].
\]
\end{lemma}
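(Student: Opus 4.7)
The plan is to construct $S$ and $X$ in two stages, using Lemma~\ref{lem:finite-branching-combi} as the central combinatorial tool and building $X$ as the limit of a nested sequence of finite $2$-branching extensions.

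In the first stage, apply Lemma~\ref{lem:finite-branching-combi} to $T \res n$ (which is $h_{k,2}$-branching at level $n$) with $p = 2$ and the restriction of $f$ to $[T \res n]^2$. This produces a $2$-branching set $S \lhd T \res n$ at level $n$ together with colors $c_\sigma < k$ for each internal $\sigma \in S$ witnessing the homogeneity $f(\sigma, \tau) = c_\sigma$ for every $\tau \in \Leaf(S)$ with $\sigma \prec \tau$. This $S$ is the desired $2$-branching set, and the $c_\sigma$'s (suitably refined in Stage~2) will be the colors appearing in the conclusion.

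In the second stage, build $X$ by constructing a $Z$-computable nested sequence $Y_0 \lhd Y_1 \lhd Y_2 \lhd \cdots$ of $2$-branching sets, with $Y_0 = S$ and $Y_s$ at level $n + s$, maintaining the invariant that $f(\sigma, \tau) = c_\sigma$ for every $\sigma \in S$ and every $\tau \in Y_s$ with $\sigma \prec \tau$. To pass from $Y_s$ to $Y_{s+1}$, for each leaf $\tau$ of $Y_s$ I would select $2$ children of $\tau$ in $T$ satisfying $f(\sigma, \cdot) = c_\sigma$ for the (at most $n + 1$) nodes $\sigma \in S$ with $\sigma \prec \tau$. Taking these selections canonically (e.g., lexicographically first) makes the construction $Z$-computable, so that $X \coloneqq \bigcup_{s \geq 1} (Y_s \setminus Y_{s-1})$ is $Z$-computable and $S \cup X = \bigcup_s Y_s$ is perfect $2$-branching.

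The main obstacle is guaranteeing that $2$ children with the committed colors always exist at every stage, since the colors produced by Lemma~\ref{lem:finite-branching-combi} applied only to $T \res n$ need not be sustainable at deeper levels. I would resolve this by choosing $S$ and the $c_\sigma$'s not directly from $T \res n$ but from a deep application: apply Lemma~\ref{lem:finite-branching-combi} to $T \res M$ for $M$ large, and take $S \coloneqq S^{(M)} \res n$ together with $c_\sigma \coloneqq c_\sigma^{(M)}$. Then at each stage of the extension one reapplies Lemma~\ref{lem:finite-branching-combi} to a sufficiently deep slice of $T$ above the current leaf $\tau$, and uses its conclusion to extract a $2$-branching continuation whose homogeneity colors are compatible with the committed $c_\sigma$'s; the $h_{k,2}$-branching of $T$ provides the required combinatorial slack via the bigness mechanism of Lemma~\ref{lem:smallness-additivity}, since $h_{k,2}(m)$ grows fast enough to accommodate the at most $n + 1$ simultaneous color constraints per leaf.
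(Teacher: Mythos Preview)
Your two-stage plan has a genuine gap in Stage~2, and the proposed fix does not close it.

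The core problem is that once you commit to colors $c_\sigma$ based on any finite slice of $T$ (whether $T \res n$ or $T \res M$), nothing guarantees those specific colors can be sustained above a given leaf. Concretely, at a leaf $\tau$ of $Y_s$ you need two extensions $\rho$ with $f(\sigma_i,\rho) = c_{\sigma_i}$ for every ancestor $\sigma_i \in S$; but the set of such $\rho$ could be empty. Your appeal to the growth of $h_{k,2}$ and to Lemma~\ref{lem:smallness-additivity} is misdirected: bigness arguments show that \emph{some} color (or color-tuple) is abundant, not that a \emph{prescribed} tuple is. Applying Lemma~\ref{lem:finite-branching-combi} to $T \res M$ only pins down $f(\sigma,\tau)$ for $\tau$ at level $M$; it says nothing about the intermediate levels $n+1,\dots,M-1$, which are exactly the nodes you are placing into $X$. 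And ``reapplying'' Lemma~\ref{lem:finite-branching-combi} above a leaf produces \emph{new} colors, with no mechanism forcing them to agree with the already-committed $c_\sigma$'s.

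The paper's proof reverses the order of the two steps, and this is the missing idea. For each leaf $\tau$ of $T \res n$, first apply (effective) $\TT^1_{k^{n+1}}$ to the infinite perfect set $T \cap [\tau]^\prec$ with the product coloring $\rho \mapsto \langle f(\sigma_0,\rho),\dots,f(\sigma_n,\rho)\rangle$, obtaining a $Z$-computable perfect $X_\tau$ on which this tuple is constant, say $\langle c^\tau_{\sigma_0},\dots,c^\tau_{\sigma_n}\rangle$. Only then define a coloring $g$ on $[T \res n]^2$ by $g(\sigma,\tau) = c^\tau_\sigma$ for $\tau$ a leaf, and apply Lemma~\ref{lem:finite-branching-combi} to $g$. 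The resulting $S$ has the property that the limit color $c^\tau_\sigma$ is the same for all leaves $\tau$ of $S$ above $\sigma$, so $X = \bigcup_{\tau \in \Leaf(S)} X_\tau$ works immediately. Stabilizing to infinity \emph{before} invoking the finite lemma is what makes the committed colors sustainable.
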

\begin{proof} 
Fix~$n$. For each~$\tau \in \Leaf(T \res n)$, let~$\sigma_0, \sigma_1, \dots, \sigma_n$ be the initial segments
of~$\tau$ in~$T$, with~$\sigma_n = \tau$.
Apply~$\TT^1_{k^{n+1}}$ over~$T \cap [\tau]^{\prec}$ 
with the coloring~$\rho \mapsto \langle f(\sigma_0,\rho), \dots, f(\sigma_n, \rho) \rangle$
to obtain a~$Z$-computable set~$X_\tau \subseteq T \cap [\tau]^{\prec}$ such that~$\{\tau\} \cup X_\tau$ is perfect
and~$X_\tau$ is homogeneous for some color~$\langle c^\tau_{\sigma_0}, \dots, c^\tau_{\sigma_n} \rangle$.
Let~$g : [T \res n]^2 \to k$ be the coloring defined by
\[
g(\sigma, \tau) = \cond{ c^\tau_\sigma & \mbox{ if } \tau \in \Leaf(T \res n)\\
	0 & \mbox{ otherwise }
}
\]
By Lemma~\ref{lem:finite-branching-combi} applied to~$T \res n$ and~$g$,
there is a~$2$-branching set~$S \lhd T \res n$ at level~$n$ such that 
\[
(\forall \sigma \in S \setminus \Leaf(S))(\exists c < k)(\forall \tau \in \Leaf(S))[\sigma \prec \tau \rightarrow g(\sigma, \tau) = c]
\]
Let~$X = \bigcup_{\tau \in \Leaf(S)} X_\tau$.
Since~$X_\tau \subseteq [\tau]^\prec$ and~$\{\tau\} \cup X_\tau$ is perfect for every~$\tau \in \Leaf(S)$,
then~$S \cup X$ is perfect. We claim that~$S$ and~$X$ satisfy the desired property. 
Fix some~$\sigma \in S$. We have two cases. 

In the first case,~$\sigma \in \Leaf(S)$. Set~$c = c^\sigma_\sigma$.
For every~$\rho \in X$ such that~$\sigma \prec \rho$,~$\rho \in X_\sigma$.
By definition of~$X_\sigma$,~$f(\sigma, \rho) = c^\sigma_\sigma = c$.

In the second case,~$\sigma$ is a proper initial segment of some~$\tau \in \Leaf(S)$.
Set~$c = c^\tau_\sigma$. For every~$\rho \in X$ such that~$\sigma \prec \rho$, 
there is some~$\tau_1 \in \Leaf(S)$ such that~$\rho \in X_{\tau_1}$.
By definition of~$S$, there is some color~$c_1$ such that
$(\forall \tau_2 \in \Leaf(S)][\sigma \prec \tau_2 \rightarrow g(\sigma, \tau_2)] = c_1$.
By letting~$\tau_2 = \tau$, we obtain~$c_1 = c^\tau_\sigma$, and by letting~$\tau_2 = \tau_1$,
we obtain~$c_1 = c^{\tau_1}_\sigma$. It follows that~$c = c^{\tau_1}_\sigma$,
and by definition of~$X_{\tau_1}$,~$f(\sigma, \rho) = c^{\tau_1}_\sigma = c$.
This completes the proof of Lemma~\ref{lem:infinite-branching-combi}.
\end{proof}

Our next result is the weaker form of Theorem \ref{thm:tt22-cone-avoidance} for~$\CTT^2_2$. Notice that together with Corollary \ref{cor:stt22coneavoid}, this gives us cone avoidance for both~$\STT^2_2$ and~$\CTT^2_2$. However, this is by itself not enough to yield Theorem \ref{thm:tt22-cone-avoidance}, as it does not imply cone avoidance for the join,~$\STT^2_2 + \COH$. We will prove that this is the case at the end of the section.

\begin{theorem}\label{thm:ctt22-cone-avoidance}
$\CTT^2_2$ admits cone avoidance.
\end{theorem}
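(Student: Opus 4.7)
The plan is to carry out a Mathias-style forcing construction, in the spirit of Lemma~\ref{lem:tt1-strong-avoidance-force}, with conditions designed to make $f$ provisionally stable on the generic. Fix $Z$, $C \not\leq_T Z$, and a $Z$-computable coloring $f : [2^{<\omega}]^2 \to 2$, and apply the cone avoidance basis theorem of Jockusch and Soare to obtain a Turing ideal $\mathcal{M} \models \WKL$ with $Z \in \mathcal{M}$ and $C \notin \mathcal{M}$. A forcing condition is a pair $(F, T)$ where $F \subseteq 2^{<\omega}$ is a finite 2-branching set, $T \in \mathcal{M}$ is a perfect set, $F \cup T$ is perfect, and the \emph{semi-stability invariant} holds: for every $\sigma \in F$ there is a color $c_\sigma < 2$ such that $f(\sigma, \tau) = c_\sigma$ for every $\tau \in T$ with $\tau \succ \sigma$. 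Extensions are Mathias extensions that preserve each color $c_\sigma$.

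The first key lemma (extension lemma, ensuring $G \cong 2^{<\omega}$) mirrors Lemma~\ref{lem:tt1-strong-avoidance-ext}: for any condition $(F, T)$ and leaf $\sigma \in \Leaf(F)$, one finds an extension properly extending $F$ above $\sigma$. Working inside $\mathcal{M}$, thin $T \cap [\sigma]^{\preceq}$ to a perfect $h_{2,2}$-branching subset $\tilde{T}$, and apply Lemma~\ref{lem:infinite-branching-combi} at some level $n \geq 1$ to obtain a 2-branching set $S \lhd \tilde{T}\res n$ together with a $Z$-computable set $X$ such that $S \cup X$ is perfect and $f(s, \cdot)$ is constant on $X \cap [s]^{\prec}$ for each $s \in S$. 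Selecting any two incomparable leaves $\xi_0, \xi_1 \in \Leaf(S)$, and replacing the $[\sigma]^{\prec}$-part of $T$ by $X \cap ([\xi_0]^{\prec} \cup [\xi_1]^{\prec})$, further thinned to lie in $\mathcal{M}$, gives the desired extension $(F \cup \{\xi_0, \xi_1\}, Y)$.

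The second key lemma (forcing lemma, avoiding $\Gamma^{G \oplus Z} = C$) follows the structure of Lemma~\ref{lem:tt1-strong-avoidance-force}. Consider the $\Pi^{0, T \oplus Z}_1$ class of sequences $\seq{P_\sigma : \sigma \in \Leaf(F)}$ of paths through $T \cap [\sigma]^{\prec}$ such that no two finite 2-branching extensions of $F$ drawn from the ranges of these paths produce a $\Gamma$-disagreement on any input. If this class is empty, extract a finite 2-branching extension $E \subseteq T$ of $F$ witnessing $\Gamma^{(F \cup E) \oplus Z}(n) \neq C(n)$ for some $n$, and then apply Lemma~\ref{lem:infinite-branching-combi} above $\Leaf(F \cup E)$ to build a new reservoir inside $\mathcal{M}$ that restores semi-stability for the newly added nodes; the resulting condition forces $\Gamma^{G \oplus Z}(n) \neq C(n)$. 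If the class is nonempty, extract a member inside $\mathcal{M}$ via $\mathcal{M} \models \WKL$, and use it to thin $T$ inside $\mathcal{M}$ in a manner preserving semi-stability; this forces $\Gamma^{G \oplus Z}$ to be either partial or $\mathcal{M}$-computable, hence distinct from $C$.

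Combining these with a sufficiently generic filter yields $G = \bigcup_s F_s$ with $G \cong 2^{<\omega}$ (by iterated extension), $f$ stable over $G$ (since each $\sigma \in G$ enters $F$ at some stage with its color $c_\sigma$ witnessed for the rest of the construction), and $C \not\leq_T G \oplus Z$ (by the forcing lemma), completing the proof. The main obstacle will be the forcing lemma: unlike in Theorem~\ref{thm:tt1-strong-avoidance}, where the reservoir could be thinned freely, here every refinement must continue to respect the semi-stability invariant, so the final application of Lemma~\ref{lem:infinite-branching-combi} must be interleaved with the selection of the disagreement-witnessing set $E$, simultaneously achieving cone avoidance and stabilization of $f$ on the newly added nodes.
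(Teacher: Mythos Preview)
Your overall architecture matches the paper's: Mathias forcing with the semi-stability invariant, an extension lemma, and a forcing lemma via a $\Pi^0_1$ class split into an empty/nonempty case. Two points in the forcing lemma need repair.

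First, the $\Pi^0_1$ class cannot consist of sequences of \emph{paths} $P_\sigma$ through $T \cap [\sigma]^{\prec}$: the range of a path is a chain, so no proper 2-branching extension of $F$ can be drawn from $\bigcup_\sigma \ran(P_\sigma)$ at all, and in the nonempty case a union of chains is not a perfect reservoir. The paper instead fixes, for each $\xi \in \Leaf(F)$, an $h_{2,2}$-branching perfect $X_\xi \subseteq T \cap [\xi]^{\prec}$, and lets the class range over sequences of \emph{perfect 2-branching subsets} $S_\xi \lhd X_\xi$. In Case~2 the cone avoidance basis theorem directly yields some $\vec{S}$ with $C \not\leq_T \vec{S} \oplus Z$, and $\bigcup_\xi S_\xi$ is then a legitimate perfect reservoir; semi-stability for $F$ is preserved since it is a subset of the old one.

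Second, the interleaving obstacle you flag in Case~1 is an artifact of choosing $E$ before stabilizing. The paper reverses the order and the obstacle disappears: compactness gives a level $n$ such that \emph{every} family of 2-branching sets $S_\xi \lhd X_\xi \res n$ at level~$n$ admits a disagreement with $C$; then Lemma~\ref{lem:infinite-branching-combi} is applied to each $X_\xi$ to produce a specific such $S_\xi$ \emph{together with} a reservoir $Y_\xi$ on which every node of $S_\xi$ is already stabilized; the disagreement witness $E$ is then taken inside $\bigcup_\xi S_\xi$ and is therefore automatically stable. Your proposed application of Lemma~\ref{lem:infinite-branching-combi} \emph{above} $\Leaf(F \cup E)$ would only stabilize nodes strictly above those leaves, not the nodes of $E$ themselves.

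Two cosmetic differences worth noting: the paper does not route through a Turing ideal $\mathcal{M}$ (conditions simply require $C \not\leq_T X \oplus Z$), and its extension lemma is lighter than yours---pick three pairwise incomparable nodes above the given leaf, apply $\TT^1_2$ to $\rho \mapsto f(\xi_i,\rho)$ above each, and use pigeonhole to select two with the same limit color.
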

\begin{proof}
Fix a set~$Z$, a set~$C \not \leq_T Z$, and a~$Z$-computable
coloring~$f : [2^{<\omega}]^2 \to 2$.
We will construct a set~$G \subseteq 2^{<\omega}$ over which~$f$ is stable,
such that~$G \cong 2^{<\omega}$ and~$C \not \leq_T G \oplus Z$.
The set~$G$ will be constructed by a forcing
whose conditions are Mathias conditions~$(F, X)$, where~$F$ is a finite 2-branching set,
$C \not \leq_T X \oplus Z$, and~$F \cup X$ is a perfect set. Moreover, we require that
\[
(\forall \sigma \in F)(\exists c < 2)(\forall \tau \in X)[\sigma \prec \tau \rightarrow f(\sigma, \tau) = c]
\]
The extension is the usual Mathias extension. The following lemmas shows that every sufficiently
generic filter for this notion of forcing yields a set~$G \cong 2^{<\omega}$.

\begin{lemma}\label{lem:ctt22-cone-avoidance-ext}
For every condition~$c = (F, X)$ and for every leaf~$\sigma$ of~$F$,
there is some extension~$d = (E, Y)$ of~$c$
such that~$E \cap [\sigma]^{\prec} \neq \emptyset$.
\end{lemma}
\begin{proof}
Fix some leaf~$\sigma \in F$. Since~$F \cup X$ is perfect,
we can pick three pairwise-incomparable nodes~$\xi_0, \xi_1, \xi_2 \in X \cap [\sigma]^\prec$.
In particular,~$X \cap [\xi_i]^\prec$ is perfect for each~$i < 3$,
so by applying~$\TT^1_2$ to~$X \cap [\xi_i]^\prec$ with~$\rho \mapsto f(\xi_i, \rho)$,
we obtain an~$X \oplus Z$-computable set~$X_i \subseteq X \cap [\xi_i]^\prec$ and a color~$c_i < 2$
such that~$\{\xi_i\} \cup X_i$ is perfect, and~$f(\xi_i, \rho) = c_i$ for each~$\rho \in X_i$.
By the pigeonhole principle, there is some~$c < 2$ and some~$i_0 < i_1 < 3$ such that~$c = c_{i_0} = c_{i_1}$.
By removing finitely many elements of~$(X \setminus [\sigma]^\prec) \cup X_{i_0} \cup X_{i_1}$,
we obtain a~$Z$-computable set~$Y$ such that
the condition~$d = (F \cup \{\xi_{i_0}, \xi_{i_1}\}, Y)$ is a valid extension of~$c$.
\end{proof}

We say a set~$S$ \emph{satisfies}~$c = (F, X)$ if~$S \cong 2^{<\omega}$
and~$S$ satisfies~$c$ as a Mathias condition.
set~$H$ satisfying~$c$.

\begin{lemma}\label{lem:ctt22-cone-avoidance-force}
For every condition~$c$ and every Turing functional~$\Gamma$,
there is an extension~$d$ of~$c$ forcing~$\Gamma^{G \oplus Z} \neq C$.
\end{lemma}
\begin{proof}
Fix~$c = (F, X)$ and~$\Gamma$.
For every~$\xi \in \Leaf(F)$, let~$X_\xi$ be an~$X \oplus Z$-computable
$h_{2,2}$-branching perfect subtree of~$X \cap [\xi]^{\prec}$.
Let~$\mathcal{C}$ be the~$\Pi^{0,X \oplus Z}_1$ class of all~$\langle S_\xi : \xi \in \Leaf(F) \rangle$
such that~$S_\xi \lhd X_\xi$ is a perfect 2-branching set for each~$\xi \in \Leaf(F)$.
Let~$\mathcal{D}$ be the~$\Pi^{0,X \oplus Z}_1$ class of all~$\langle S_\xi : \xi \in \Leaf(F) \rangle \in \mathcal{C}$
such that for every pair~$E_0, E_1 \subseteq \bigcup_\xi S_\xi$ of finite sets with 
$F \cup E_0$ and~$F \cup E_1$ both 2-branching, 
it is not the case that~$\Gamma^{(F \cup E_0) \oplus Z} \downarrow = \Gamma^{(F \cup E_1) \oplus Z} \downarrow$.
We have two cases.

\emph{Case 1}.~$\mathcal{D}$ is empty. By compactness, there is some~$n$
such that for every ~$\langle S_\xi : \xi \in \Leaf(F) \rangle$
where~$S_\xi \lhd X_\xi \res n$ is a 2-branching set at level~$n$,
there is a set~$E \subseteq \bigcup_\xi S_\xi$ and some~$m$ such that 
$F \cup E$ are is 2-branching, and~$\Gamma^{(F \cup E) \oplus Z}(m) \neq C(m)$.
By Lemma~\ref{lem:infinite-branching-combi} applied to each~$X_\xi$, 
there are some ~$\langle S_\xi : \xi \in \Leaf(F) \rangle$
where~$S_\xi \lhd X_\xi \res n$ is a 2-branching set at level~$n$,
and some~$Z$-computable sets~$\langle Y_\xi \subseteq X_\xi : \xi \in \Leaf(F) \rangle$
such that~$S_\xi \cup Y_\xi$ is perfect and
\[
(\forall \sigma \in S_\xi)(\exists c < 2)(\forall \rho \in Y_\xi)[\sigma \prec \rho \rightarrow f(\sigma, \rho) = c].
\]
Let~$Y \subseteq \bigcup_\xi Y_\xi$ be obtained by removing finitely elements, 
so that~$(F \cup E, Y)$ is a valid Mathias condition
and~$F \cup E \cup Y$ is perfect. The condition~$d = (F \cup E, Y)$ is an extension of~$c$
forcing~$\Gamma^{G \oplus Z} \neq C$.

\emph{Case 2}.~$\mathcal{D}$ is non-empty. By the cone avoidance basis theorem, there is some~$\vec{S} = \langle S_\xi : \xi \in \Leaf(F) \rangle \in \mathcal{D}$
such that~$C \not \leq_T \vec{S} \oplus Z$.
Let~$Y$ be obtained by removing finitely many elements of~$\bigcup_\xi S_\xi$
so that~$F \cup Y$ is perfect. The condition~$d = (F, Y)$ is an extension of~$c$ forcing~$\Gamma^{G \oplus Z}$
to be either partial, or~$X \oplus Z$-computable.
\end{proof}

Let~$\mathcal{F} = \{c_0, c_1, \dots \}$ be a sufficiently generic filter,
where~$c_s = (F_s, X_s)$, and let~$G = \bigcup_s F_s$.
By definition of a condition,~$f$ is stable over~$G$.
By Lemma~\ref{lem:ctt22-cone-avoidance-ext},~$G \cong 2^{<\omega}$,
and by Lemma~\ref{lem:ctt22-cone-avoidance-force},~$C \not \leq_T G \oplus Z$.
This completes the proof of Theorem~\ref{thm:ctt22-cone-avoidance}. \qedhere
\end{proof}

We are now ready to prove that~$\TT^2_2$ admits cone avoidance.

\begin{proof}[Proof of Theorem~\ref{thm:tt22-cone-avoidance}]
Fix a set~$Z$, a set~$C \not \leq_T Z$, and a~$Z$-computable
coloring~$f : [2^{<\omega}]^2 \to 2$.
By cone avoidance of~$\CTT^2_2$ (Theorem~\ref{thm:ctt22-cone-avoidance}), 
there is a set~$H_0 \subseteq 2^{<\omega}$ such that~$H_0 \cong 2^{<\omega}$,
$C \not \leq_T H_0 \oplus Z$, and over which~$f$ is stable.
Define the~$\Delta^{0,H_0 \oplus Z}_2$ set~$A \subseteq H_0$ by
\[
A = \{ \sigma \in H_0 : (\exists n)(\forall \tau \in H_0)[(\tau \succ \sigma \wedge |\tau| \geq n) \rightarrow f(\sigma, \tau) = 1 \}.
\]
By strong cone avoidance of~$\TT^1_2$ (Theorem~\ref{thm:tt1-strong-avoidance}),
there is a subset~$H_1$ of~$A$ or~$H_0 \setminus A$ such that~$H_1 \cong 2^{<\omega}$ and~$C \not \leq_T H_1 \oplus H_0 \oplus Z$.
Note~$H_1 \oplus H_0 \oplus Z$ computes an~$f$-homogeneous set~$H \subseteq H_1$
such that~$H \cong 2^{<\omega}$. In particular,~$C \not \leq_T H \oplus Z$.
\end{proof}

\section{An Erd\H{o}s-Rado theorem and Ramsey's theorem for pairs}\label{S:ER}

Among the candidate statements between Ramsey's theorem for pairs and~$\ACA_0$, 
a theorem from Erd\H{o}s and Rado~\cite[Theorem 4, p. 427]{Erdos1952Combinatorial} is arguably the most natural.
This theorem extends Ramsey's theorem for pairs to coloring over pairs of rationals. In what follows, we will be dealing with colorings on pairs of rational numbers. Let~$\leq_{\mathbb{Q}}$ be the standard ordering of the rationals.

\begin{definition}\
	Fix~$X \subseteq \mathbb{Q}$ and~$n,k \geq 1$.
	\begin{enumerate}
		\item~$[X]^2$ is the set of all pairs~$\seq{x,y} \in X^2$ with~$x < y$.
		\item A \emph{$k$-coloring of~$[X]^2$} is a function~$f : [X]^n \to k$, and we write~$f(x,y)$ instead of~$f(\seq{x,y})$ for~$\seq{x,y} \in [X]^2$.
		\item A set~$Y \subseteq X$ is \emph{homogeneous for~$f$}, or \emph{$f$-homogeneous}, if there is a color~$c < k$ such that~$f ([Y]^n) = \{c\}$.
	\end{enumerate}
\end{definition}

\noindent We shall reserve the term \emph{infinite homogeneous set} to mean a set~$H \subseteq \mathbb{Q}$ such that~$H$ is homogeneous for~$f$ and~$(H,\leq_{\mathbb{Q}})$ is order-isomorphic to~$(\omega,\leq_{\mathbb{Q}})$ (or equivalently, to~$(\omega,\leq)$). For~$Y \subseteq \mathbb{Q}$, we say~$Y$ is of \emph{order type~$\eta$} if~$(Y,\leq_{\mathbb{Q}})$ is order-isomorphic to~$(\mathbb{Q},\leq_{\mathbb{Q}})$.

\begin{statement}[Erd\H{o}s-Rado theorem ($\erp$)]
For every coloring~$f : [\Qb]^2\to 2$, there is either an infinite homogeneous set with color~$0$, or a homogeneous set of order type~$\eta$ with color~$1$.
\end{statement}

Note that this statement cannot be strengthened to require the existence of a homogeneous set of order type~$\eta$ (of one or the other color).
Indeed, there exists a coloring~$f : [\Qb]^2 \to 2$ with no homogeneous set of order type~$\eta$ at all (see \cite[Section 1]{FP-TA} for an example).
The reverse mathematics of this Erd\H{o}s-Rado theorem was studied by Frittaion and Patey in~\cite{FP-TA}.
There, it is proved in particular that~$\erp$ does not computably reduce to Ramsey's theorem for pairs and arbitrary many colors
($\erp \not \leq_c \rt^2_{<\infty}$), but the separation of~$\rt^2_2$ from~$\erp$ over~$\RCA_0$ is left open.
Frittaion and Patey notice that the combinatorics of the tree theorem for pairs and the Erd\H{o}s-Rado theorem have a similar flavor,
in that they shar the so-called ``disjoint extension commitment'' (see Section~5 in~\cite{FP-TA}).

However, no formal relation is established between~$\tto^2_2$ and~$\erp$ in reverse mathematics so far.
Intuitively, the Erd\H{o}s-Rado theorem seems to be weaker than the tree theorem for pairs as it requires
only one of the two sides to have homogeneous sets of order type~$\eta$. On the other hand,~$\erp$ is a coloring
of all pairs of rationals, while~$\tto^2_2$ colors only the comparable pairs of nodes.
One can however prove that the stable tree theorem for pairs implies the restriction of~$\erp$
to stable colorings.

\begin{definition}
	Fix~$f : [\Qb]^2 \to k$ and~$X \subseteq \Qb$.
	\begin{enumerate}
		\item~$f$ is \emph{stable over~$X$} if for every~$x \in X$, there is a color~$c < k$ and a finite set~$S \subseteq X$ 
such that~$f(x, y) = c$ for all~$x, y \in X$ such that~$y \not \in S$.
		\item~$f$ is \emph{stable} if it is stable over~$\Qb$.
	\end{enumerate}
\end{definition}

\begin{statement}[$\serp$]
	For every stable coloring~$f : [\Qb]^2 \to k$, there is either an infinite homogeneous set with color~$0$, or a homogeneous of order type~$\eta$ with color~$1$. 
\end{statement}

In this section, we will prove the following theorem,
which answers a question of Frittaion and Patey in~\cite{FP-TA}.

\begin{theorem}\label{thm:rt22-wkl-erp}
$\rca + \wkl \nvdash \rt^2_2 \to \serp$.
\end{theorem}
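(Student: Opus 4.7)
The plan is to construct an $\omega$-model $\mathcal{M}$ of $\rca + \wkl + \rt^2_2$ containing a computable stable instance $f : [\Qb]^2 \to 2$ of $\serp$ with no solution in $\mathcal{M}$. Following the template of Lemma~\ref{lem:coneavoidtononimplication}, $\mathcal{M}$ will be built as the downward closure of a Turing chain $Z_0 \leq_T Z_1 \leq_T \cdots$, where at each stage we either adjoin a path through a given $Z_n$-computable infinite binary tree (to satisfy $\wkl$) or adjoin a homogeneous set for a given $Z_n$-computable $2$-coloring of $[\omega]^2$ (to satisfy $\rt^2_2$), while maintaining throughout an invariant that forbids computing a solution to $f$.

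The first step is to choose $f$ and the right preservation notion. The delicate feature of $\serp$ is that its two possible solution types have very different combinatorial characters: an infinite color-$0$ homogeneous set is a chain-like object resembling a solution of $\rt^1_2$, whereas a color-$1$ homogeneous set of order type $\eta$ is a dense object produced by back-and-forth and closer in spirit to a perfect subtree of $2^{<\omega}$. I would construct a computable stable $f$ in which the set $A_0 = \{x : (\forall^\infty y)\, f(x,y) = 0\}$ is arranged so that any infinite color-$0$ homogeneous increasing sequence in $A_0$ must encode a fixed ``hard'' object --- for concreteness, a path through a fixed computable tree of positive measure (equivalently, a 2-valued $\mathrm{DNR}$ function relative to some base oracle) --- and the complementary ``dense'' side $A_1$ is simultaneously diagonalized so that any color-$1$ subset of order type $\eta$ must encode the same object. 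Both restrictions can be arranged by a straightforward coding construction because the ``witnesses'' (finite increasing chains or finite $\eta$-approximations) are enumerable.

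The second and third steps are the preservation lemmas and the iteration. Preservation of non-computation of a path through a fixed $\Pi^0_1$ class of positive measure is admitted by $\wkl$ via the cone-avoidance basis theorem of Jockusch--Soare. For $\rt^2_2$, I would appeal to the decomposition $\rt^2_2 \leftrightarrow \srt^2_2 + \coh$ (Proposition~\ref{prop:CJS_decomp}) and verify the same preservation for each piece by Mathias forcing, following the Cholak--Jockusch--Slaman / Seetapun argument with cone avoidance replaced by the chosen notion --- this is by now standard. Iterating these two lemmas alternately produces the chain $(Z_n)$ and hence the model $\mathcal{M}$, in which $f$ is computable and witnesses the failure of $\serp$, proving $\rca + \wkl \nvdash \rt^2_2 \to \serp$.

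The main obstacle is Step~1: a single computable stable $f$ must be engineered so that \emph{both} its sparse color-$0$ solutions and its dense color-$1$ solutions compute the same target object, and the target must be one that $\wkl$ \emph{and} both halves of the $\rt^2_2$-decomposition can provably avoid. The dense side is the harder half, since enforcing the coding requirement along a dense, back-and-forth-built subset of $\Qb$ is combinatorially subtler than the chain-like argument for the sparse side.
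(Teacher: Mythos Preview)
Your overall architecture is correct and matches the paper: build an $\omega$-model as the Turing ideal generated by a chain, iterating solutions to $\wkl$ and $\rt^2_2$ instances while maintaining a preservation invariant that blocks any $\serp$-solution to a fixed computable stable instance. The paper likewise decomposes $\rt^2_2$ into $\coh$ and (strong) $\rt^1_2$ and proves preservation for each piece and for $\wkl$.

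The gap is in Step~1, and it is not merely the acknowledged difficulty of the dense side---it is the choice of invariant itself. None of the invariants you suggest can work:
\begin{itemize}
  \item ``Does not compute a member of a fixed positive-measure $\Pi^0_1$ class $\mathcal{P}$'': this is destroyed by $\wkl$ immediately, since $\mathcal{P}$ is itself a $\wkl$-instance and any solution is a member of~$\mathcal{P}$.
  \item ``Does not compute a $\{0,1\}$-valued DNR function'': same problem---the class of such functions is a nonempty $\Pi^0_1$ class, so $\wkl$ produces one. (Also, this class has measure zero, so it is not equivalent to the previous item.)
  \item ``Does not compute a fixed set $C$'' (plain cone avoidance): this \emph{is} preserved by $\wkl$ and $\rt^2_2$, but $\serp$ preserves it too. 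Indeed, the paper proves $\rca\vdash\stto^2_2\to\serp$, and Corollary~\ref{cor:stt22coneavoid} shows $\stto^2_2$ admits cone avoidance; hence every computable $\serp$-instance has a solution that avoids any prescribed cone. So no single ``hard object $O$'' can be encoded into \emph{all} $\serp$-solutions of a computable instance.
\end{itemize}
In short, the separation cannot be witnessed by any off-the-shelf invariant of cone-avoidance type.

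What the paper does instead is invent a bespoke invariant, \emph{$\er$-fairness}, tailored to the asymmetric structure of $\serp$. The $\Delta^0_2$ partition $A_0\cup A_1=\Qb$ is built so that $\emptyset$ is $0$-$\er$-fair (Lemma~\ref{lem:partition-emptyset-er-fair}), and one shows directly that \emph{no} $\serp$-solution to the associated stable instance can be $\er$-fair (Theorem~\ref{thm:er22-not-er-fairness}). Preservation for $\wkl$ and $\coh$ holds at each fixed level $n$ (Theorems~\ref{thm:wkl-n-er-fairness} and~\ref{thm:coh-n-er-fairness}), but crucially the $\rt^1_2$ step only takes an $n$-fair reservoir to an $(n{+}1)$-fair solution (Theorem~\ref{thm:rt12-strong-er-fairness}); this is why fairness is stratified by the parameter $n$ and $\er$-fairness is defined as the union over~$n$. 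That stratification, and the matrix/type/valuation machinery supporting it, is the genuine technical content you are missing---your sketch treats the preservation for $\rt^2_2$ as ``by now standard'', but the standard argument does not go through for any invariant strong enough to block dense $\serp$-solutions.
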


Before proving Theorem~\ref{thm:rt22-wkl-erp}, we first show that it yields
another proof that the tree Ramsey theorem for pairs is strictly stronger
than~$\rt^2_2$ in reverse mathematics.

\begin{corollary}
$\rca + \wkl \nvdash \rt^2_2 \to \stto^2_2$.
\end{corollary}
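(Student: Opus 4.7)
The plan is a pure contraposition argument chaining two facts. I rely on the implication $\stto^2_2 \vdash \serp$ over $\rca$, which is asserted in the discussion preceding Theorem \ref{thm:rt22-wkl-erp} (and established in \cite{FP-TA}). Given this, suppose toward a contradiction that $\rca + \wkl \vdash \rt^2_2 \to \stto^2_2$. Composing would yield $\rca + \wkl \vdash \rt^2_2 \to \serp$, directly contradicting Theorem \ref{thm:rt22-wkl-erp}. Hence $\rca + \wkl \nvdash \rt^2_2 \to \stto^2_2$.

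The only ingredient I would spell out is the intermediate implication $\stto^2_2 \vdash \serp$. The approach is to fix a computable bijection $\phi : 2^{<\omega} \to D$, where $D \subseteq \Q$ is the set of dyadic rationals in $(0,1)$ (order-isomorphic to $\Q$), identifying each binary string with its dyadic rational interpretation. Given a stable $f : [\Q]^2 \to 2$, define a tree coloring $g(\sigma,\tau) = f(\phi(\sigma),\phi(\tau))$ on comparable pairs $\sigma \prec \tau$. Tree-stability of $g$ follows from stability of $f$ via a level-counting argument: the finite $f$-exceptional set around each $\phi(\sigma)$ has $\phi$-preimage contained in finitely many tree levels, so $g(\sigma,\cdot)$ is eventually constant on sufficiently deep descendants. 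Apply $\stto^2_2$ to $g$ and extract from the resulting $H \cong 2^{<\omega}$ either an infinite chain (color-$0$ case, giving an $f$-homogeneous set of order type $\omega$) or a sufficiently dense subset (color-$1$ case, giving one of order type $\eta$).

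The main obstacle is the color-$1$ case: $\stto^2_2$ controls only tree-comparable pairs, whereas $\serp$-homogeneity is required for all pairs in $\Q$. Bridging this gap requires a further thinning of $H$, using stability of $f$ to eliminate tree-incomparable pairs whose $\phi$-images take color $0$. For the corollary itself, however, this intermediate implication is used as a black box, so the entire argument reduces to a single application of transitivity against Theorem \ref{thm:rt22-wkl-erp}.
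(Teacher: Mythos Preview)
Your high-level structure---derive the corollary from Theorem~\ref{thm:rt22-wkl-erp} via $\rca \vdash \stto^2_2 \to \serp$---is exactly the paper's. But the black-box citation is misplaced: the implication $\stto^2_2 \to \serp$ is not established in \cite{FP-TA}; the paper explicitly remarks that no formal relation between $\tto^2_2$ and $\erp$ had been proved before. The sentence you cite from the discussion is a forward reference, and the corollary's proof is precisely where the paper supplies the argument, so you cannot treat it as given.

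Your sketch of that implication also differs from the paper's in a way that explains your ``obstacle.'' You set $g(\sigma,\tau) = f(\phi(\sigma),\phi(\tau))$ directly, which indeed only controls tree-comparable pairs. The paper instead passes through the $\Delta^0_2$ limit set $A = \{\sigma : f(x_\sigma, y) = 1 \text{ for almost every } y\}$ and applies Shoenfield's limit lemma to obtain a stable $g$ whose limit coloring is the characteristic function of~$A$. A $g$-homogeneous $T \cong 2^{<\omega}$ then automatically satisfies $T \subseteq A$ or $T \subseteq \overline{A}$, i.e., every $x_\sigma$ with $\sigma \in T$ has the same $f$-limit color over all of~$\Qb$. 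From there a direct $(T \oplus f)$-computable thinning produces an $f$-homogeneous set of order type~$\eta$ in either color case, with no separate handling of incomparable pairs. Your $g$ actually has the same limit set~$A$ (this is exactly your level-counting argument), so your route can be completed the same way once you make that observation explicit; as written, though, the obstacle is acknowledged but not resolved. A minor additional point: an arbitrary infinite chain in $H$ need not map under $\phi$ to a set of $\Qb$-order type~$\omega$, so your color-$0$ case also needs care (or simply build order type~$\eta$ uniformly, as the paper does).
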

\begin{proof}
Thanks to Theorem~\ref{thm:rt22-wkl-erp},
we simply need to prove that~$\rca \vdash \stto^2_2 \to \serp$.
For each~$\sigma \in 2^{<\omega}$, associate a rational~$x_\sigma$ inductively as follows.
First,~$x_\varepsilon = 0$. Having defined~$x_\sigma$, we let~$x_{\sigma 0} = x_\sigma - 2^{-|\sigma|}$
ad~$x_{\sigma 1} = x_\sigma + 2^{-|\sigma|}$. Note that for every set~$T \cong 2^{<\omega}$,
the set~$\{ x_\sigma : \sigma \in T \}$ is of order type~$\eta$.
Given a stable coloring~$f : [\Qb]^2 \to 2$, let
$$
A = \{ \sigma \in 2^{<\omega} : f(x_\sigma, y) = 1 \mbox{ for almost every } y \}
$$
The set~$A$ is~$\Delta^0_2$, so by Shoenfield's limit lemma~\cite{Shoenfield-1959}, 
there is a stable computable function~$g : [2^{<\omega}]^2 \to 2$ such that
$$
A = \{ \sigma : (\exists n)(\forall \tau \succ \sigma)[|\tau| > n \rightarrow g(\sigma, \tau) = 1]\}
$$
Apply~$\stto^2_2$ to get a~$g$-homogeneous set~$T \cong 2^{<\omega}$.
Note that in particular,~$T \subseteq A$ or~$T \subseteq 2^{<\omega} \setminus A$.
The set~$T$ together with~$f$ computes an a set~$H \subseteq \Qb$
such that~$(H, \leq_\Qb)$ is of order type~$\eta$.
This completes the proof.
\end{proof}

The remainder of this section is devoted to the proof of Theorem~\ref{thm:rt22-wkl-erp}.
For this, we will use various combinatorial tools introduced in~\cite{FP-TA}
and generalize their notion of fairness to be preserved by multiple applications of 
Ramsey's theorem for pairs and weak K\"onig's lemma.
We now recall the definitions introduced by Frittaion and Patey in~\cite{FP-TA} and introduce
the generalized notion of fairness.

\begin{definition}
A \emph{simple partition}~$\inter_\Qb(S)$ is a finite sequence of open intervals~$(-\infty, x_0), (x_0, x_1), \dots, (x_{n-1}, +\infty)$
for some set of rationals~$S = \{x_0 <_\Qb \dots <_\Qb x_{n-1}\}$. We set 
$\inter_\Qb(\emptyset) = \{\Qb\}$. 
A simple partition~$I_0, \dots, I_{n-1}$
\emph{refines} another simple partition~$J_0, \dots, J_{m-1}$ if for every~$i < n$,
there is some~$j < m$ such that~$I_i \subseteq J_j$.
Given two simple partitions~$I_0, \dots, I_{n-1}$ and~$J_0, \dots, J_{m-1}$,
the product~$\vec{I} \otimes \vec{J}$ is the simple partition
\[
\{ I \cap J : I \in \vec{I} \wedge J \in \vec{J} \}
\]
\end{definition}

The partition terminology comes from the fact that~$S \cup \bigcup \inter_\Qb(S) = \Qb$
for every finite set of rationals~$S$. In particular,~$\inter_\Qb(S)$ refines~$\inter_\Qb(T)$ if~$T \subseteq S$
and that~$\inter_\Qb(S\cup T)=\inter_\Qb(S)\otimes\inter_\Qb(T)$.

\begin{definition}
An \emph{$m$-by-$n$ matrix}~$M$ is a rectangular array of rationals~$x_{i,j} \in \Qb$
such that~$x_{i,j} <_\Qb x_{i,k}$ for each~$i < m$ and~$j < k < 
n$. 
The~$i$th \emph{row}~$M(i)$ of the matrix~$M$ is the~$n$-tuple of rationals~$x_{i,0} < 
\dots < x_{i,n-1}$. The simple partition~$\inter_\Qb(M)$ is defined by ~$\bigotimes_{i < 
m} \inter_\Qb(M(i))$. In particular,~$\bigotimes_{i <m} \inter_\Qb(M(i))$ refines the 
simple partition~$\inter_\Qb(M(i))$ for each~$i < m$.
\end{definition}

An~\emph{$M$-partition of~$\Qb$} is a simple partition of~$\Qb$ refining~$\bigotimes_{i < m} \inter_\Qb(M(i))$.
Given a simple partition~$\vec{I}$, we want to classify the~$k$-tuples of rationals
according to which interval of~$\vec{I}$ they belong to.
This leads to the notion of~$(\vec{I},k)$-type.

\begin{definition}
Given a simple partition~$I_0, \dots, I_{n-1}$ and some~$k \in \omega$,
an \emph{$(\vec{I},k)$-type} is a tuple~$T_0, \dots, T_{k-1}$
such that~$T_i \in \vec{I}$ for each~$i < k$.
\end{definition}

We now state two simple combinatorial lemmas which are adapted from
Lemma~4.6 and Lemma~4.7 of Frittaion and Patey~\cite{FP-TA}.

\begin{lemma}\label{lem:tuple-has-mtype}
For every simple partition~$I_0, \dots, I_{k-1}$ and for every~$m$-tuple
of rationals~$x_0, \dots, x_{m-1} \in \bigcup_{j < k} I_j$, 
there is an~$(\vec{I},m)$-type~$T_0, \dots, T_{m-1}$
such that~$x_j \in T_j$ for each~$j < m$.
\end{lemma}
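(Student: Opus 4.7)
The statement is essentially a definitional unfolding, so the plan is very short. The key observation is that a simple partition $\inter_\Qb(S)$ consists of pairwise disjoint open intervals (the connected components of $\Qb \setminus S$), so for each rational $x \in \bigcup_{i<k} I_i$ there is a unique $i < k$ with $x \in I_i$.

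Given this, the plan is as follows. Fix the simple partition $I_0, \dots, I_{k-1}$ and the $m$-tuple $x_0, \dots, x_{m-1}$ from the hypothesis. For each $j < m$, use the hypothesis $x_j \in \bigcup_{i<k} I_i$ to pick the (unique) index $i_j < k$ such that $x_j \in I_{i_j}$, and set $T_j = I_{i_j}$. Since each $T_j$ is a member of $\vec{I}$, the tuple $(T_0, \dots, T_{m-1})$ is by definition an $(\vec{I}, m)$-type, and by construction $x_j \in T_j$ for each $j < m$, as required.

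There is no real obstacle: the only thing worth noting is that the hypothesis $x_j \in \bigcup_{i<k} I_i$ is needed precisely because the cut points defining $\vec{I}$ do not themselves lie in any interval of $\vec{I}$, so without it we could not assign a type to such an $x_j$. Beyond this remark, the argument is a single application of the definition of simple partition and of $(\vec{I}, m)$-type.
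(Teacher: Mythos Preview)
Your proof is correct and essentially identical to the paper's: both simply pick, for each $x_j$, an interval $T_j \in \vec{I}$ containing it, and observe that the resulting tuple is an $(\vec{I},m)$-type. Your remark on uniqueness of the interval is true but not needed, and the paper omits it.
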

\begin{proof}
Fix~$m$ rationals~$x_0, \dots, x_{m-1} \in \bigcup_{i < k} I_i$. For each~$i < m$, 
there is some interval~$T_i \in \vec{I}$ such that~$x_i \in T_i$ since~$x_i \in \bigcup_{j < m} I_j$.
The sequence~$T_0, \dots, T_{m-1}$ is the desired~$(\vec{I}, m)$-type.
\end{proof}

\begin{lemma}\label{lem:mtype-interval-disjoint}
For every~$m$-by-$m$ matrix~$M$, every~$M$-partition~$I_0, \dots, I_{k-1}$ and 
for every~$(I, m)$-type~$T_0, \dots, T_{m-1}$,
there is an~$m$-tuple of intervals~$J_0, \dots, J_{m-1}$ with~$J_i \in \inter_\Qb(M(i))$ such that
\[
(\bigcup_{j < m} T_j) \cap (\bigcup_{i < m} J_i) = \emptyset
\]
\end{lemma}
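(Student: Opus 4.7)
The plan is to exploit the basic counting fact that each $\inter_\Qb(M(i))$ has exactly $m+1$ intervals while an $(\vec{I},m)$-type only uses $m$ of them.

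First, I would observe that because $\vec{I}$ is an $M$-partition, it refines $\bigotimes_{i<m}\inter_\Qb(M(i))$, so each $T_j \in \vec{I}$ is contained in some product cell. That is, for each $j < m$ there exist intervals $K^j_0, \dots, K^j_{m-1}$ with $K^j_i \in \inter_\Qb(M(i))$ such that $T_j \subseteq \bigcap_{i<m} K^j_i$; in particular $T_j \subseteq K^j_i$ for every $i < m$.

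Next, fix $i < m$. The row $M(i)$ consists of $m$ rationals $x_{i,0} <_\Qb \dots <_\Qb x_{i,m-1}$, so $\inter_\Qb(M(i))$ contains exactly $m+1$ open intervals. The set $\{K^j_i : j < m\}$ has cardinality at most $m$, so we may pick an interval $J_i \in \inter_\Qb(M(i)) \setminus \{K^j_i : j < m\}$.

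Finally, I would check the disjointness claim: for every $i,j < m$ the intervals $J_i$ and $K^j_i$ are two distinct cells of the simple partition $\inter_\Qb(M(i))$ and are therefore disjoint (the cells of a simple partition are pairwise disjoint open intervals). Since $T_j \subseteq K^j_i$, this gives $J_i \cap T_j = \emptyset$ for all $i,j$, hence $(\bigcup_{j<m} T_j) \cap (\bigcup_{i<m} J_i) = \emptyset$, as required. There is no genuine obstacle here; the only thing to notice is the pigeonhole count ($m+1$ intervals per row vs.\ $m$ types) together with the trivial observation that distinct cells of a simple partition are disjoint.
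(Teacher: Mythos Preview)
Your proof is correct and follows essentially the same pigeonhole argument as the paper: each $\inter_\Qb(M(i))$ has $m+1$ intervals, the $m$ types occupy at most $m$ of them, so one is left over and is disjoint from all $T_j$. If anything, your version is more carefully written than the paper's, which compresses the key observation (that each $T_j$ lies in a single cell of every $\inter_\Qb(M(i))$) into a single line.
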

\begin{proof}
Let~$T_0, \dots, T_{m-1}$ be an~$(\vec{I},m)$-type for some~$M$-partition~$I_0, \dots, I_{k-1}$.
For every~$i < m$, there is some~$J \in \inter_\Qb(M(i))$ such that~$T_i \subseteq J$.
Since~$|\inter_\Qb(M(i))| = m+1$, there is an interval~$J_i \in \inter_\Qb(M(i))$ such that~$(\bigcup_{j <m} T_j) \cap J_i = \emptyset$.
\end{proof}

\begin{definition}
Given an \emph{$m$-by-$n$ matrix}~$M$, an \emph{$M$-formula} 
is a formula~$\varphi$ with distinguished set variables~$U_j$ for each~$j < m$
and~$C_{i,I}$ for each~$i < m$ and~$I \in \inter_\Qb(M(i))$.
An \emph{$M$-valuation~$V$} is a tuple of finite sets~$A_j \subseteq \Qb$ for each~$j < m$
and~$D_{i,I} \subseteq I$ for each~$i < m$ and~$I \in \inter_\Qb(M(i))$.
The~$M$-valuation~$V$ is of type~$\vec{T}$ for some~$M$-partition~$\vec{I}$ and some~$(\vec{I}, m)$-type~$T_0, \dots, T_{m-1}$
if moreover~$A_j \subseteq T_j$ for each~$j < m$.
The valuation~$V$ \emph{satisfies}~$\varphi$ if~$\varphi(A_j : j < m, D_{i,I} : i < m, I \in \inter_\Qb(M(i)))$ holds.
We write~$\varphi(V)$ for~$\varphi(A_j : j < m, D_{i,I} : i < m, I \in \inter_\Qb(M(i)))$.
\end{definition}

Given some valuation~$V = (\vec{A}, \vec{D})$ and some integer~$s$, we write~$V > s$
to say that for every~$x \in (\bigcup \vec{A}) \cup (\bigcup \vec{D})$,~$x > s$. 
Following the terminology of~\cite{LST-2013}, we define 
the notion of essentiality for a formula (an abstract requirement),
which corresponds to the idea that there is room for diagonalization
since the formula is satisfied for arbitrarily far valuations.

\begin{definition}
Given an~$m$-by-$n$ matrix~$M$, an~$M$-partition~$\vec{I}$,
and an~$(\vec{I}, m)$-type~$\vec{T}$, we say that an~$M$-formula~$\varphi$
is \emph{$\vec{T}$-essential} if for every~$s \in \omega$,
there is an~$M$-valuation~$V > s$ of type~$\vec{T}$
such that~$\varphi(V)$ holds.
\end{definition}

We simply say that~$\varphi$ is \emph{essential} if it is~$\vec{T}$-essential
for some ~$M$-partition~$\vec{I}$ and some~$(\vec{I}, m)$-type~$\vec{T}$.
The notion of~$\er$-fairness is defined accordingly. If some formula
is essential, that is, gives enough room for diagonalization, then there is
an actual valuation which will diagonalize against the~$\erps$-instance.

\begin{definition}
Fix two sets~$A_0, A_1 \subseteq \Qb$.
Given an~$m$-by-$n$ matrix~$M$, an~$M$-valuation~$V = (\vec{B}, \vec{D})$ 
\emph{diagonalizes} against~$A_0, A_1$
if~$\bigcup \vec{B} \subseteq A_1$ and for every~$i < m$, there is some~$I \in \inter_\Qb(M(i))$ 
such that~$I \subseteq A_0$.
A set~$X$ is \emph{$n$-$\er$-fair} for~$A_0, A_1$ if for every~$m$, every ~$m$-by-$2^nm$ matrix~$M$,
and every~$\Sigma^{0,X}_1$ essential~$M$-formula, there is an~$M$-valuation~$V$ diagonalizing against~$A_0, A_1$ 
such that~$\varphi(V)$ holds.
A set~$X$ is \emph{$\er$-fair} for~$A_0, A_1$ if it is~$n$-$\er$-fair for~$A_0, A_1$ for some~$n \geq 0$.
\end{definition}

Of course, if~$Y \leq_T X$, then every~$\Sigma^{0,Y}_1$ formula is~$\Sigma^{0,X}_1$.
As an immediate consequence, if~$X$ is~$n$-$\er$-fair for some~$A_0, A_1$ and~$Y \leq_T X$, then~$Y$ is~$n$-$\er$-fair for~$A_0, A_1$.

\begin{definition}
Fix a~$\Pi^1_2$ statement~$\Psf$.
	\begin{enumerate}
	\item~$\Psf$ admits \emph{$\er$-fairness preservation} (respectively, \emph{$n$-$\er$-fairness preservation}) if for all sets~$A_0, A_1 \subseteq \Qb$,
every set~$C$ which is~$\er$-fair (respectively,~$n$-$\er$-fair) for~$A_0, A_1$ and every~$C$-computable~$\Psf$-instance~$X$,
there is a solution~$Y$ to~$X$ such that~$Y \oplus C$ is~$\er$-fair (respectively,~$n$-$\er$-fair) for~$A_0, A_1$.
	\item~$\Psf$ admits strong \emph{$\er$-fairness preservation} (respectively, \emph{$n$-$\er$-fairness preservation}) if for all sets~$A_0, A_1 \subseteq \Qb$,
every set~$C$ which is~$\er$-fair (respectively,~$n$-$\er$-fair) for~$A_0, A_1$ and every~$\Psf$-instance~$X$,
there is a solution~$Y$ to~$X$ such that~$Y \oplus C$ is~$\er$-fair (respectively,~$n$-$\er$-fair) for~$A_0, A_1$.
\end{enumerate}
\end{definition}

We create a non-effective instance of~$\erps$ which will serve as a bootstrap for~$\er$-fairness preservation.
The proof is very similar to Lemma 4.11 in~\cite{FP-TA}.

\begin{lemma}\label{lem:partition-emptyset-er-fair}
There exists a~$\Delta^0_2$ partition~$A_0 \cup A_1 = \Qb$ such that
$\emptyset$ is 0-$\er$-fair for~$A_0, A_1$.
\end{lemma}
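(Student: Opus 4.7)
The plan is to build $A_0 \cup A_1 = \mathbb{Q}$ by a $\emptyset'$-computable construction, following the strategy of Lemma~4.11 of Frittaion and Patey. We enumerate at the meta-level all c.e.\ pairs $(M_s, \varphi_s)$, where $M_s$ is an $m_s$-by-$m_s$ matrix and $\varphi_s$ is a $\Sigma^0_1$ $M_s$-formula. The construction maintains at each stage a finite set $F_1 \subseteq \mathbb{Q}$ (rationals committed to $A_1$) and a finite union of open rational intervals $\mathcal{I}_0$ (committed to $A_0$), with $F_1 \cap \mathcal{I}_0 = \emptyset$. At the end, we let $A_0 = \bigcup_s \mathcal{I}_0^{(s)}$ and take $A_1 = \mathbb{Q} \setminus A_0$, so that every committed element of $F_1$ remains in $A_1$.

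At stage $s$ we use $\emptyset'$ to decide whether $\varphi_s$ is essential, which is a $\Sigma^0_2$ question; if not, we do nothing. Otherwise, $\emptyset'$ also provides a witnessing $M_s$-partition $\vec{I}$ and $(\vec{I}, m_s)$-type $\vec{T}$. We refine $\vec{I}$ into $\vec{I}'$ by adjoining the finitely many endpoints of intervals in $\mathcal{I}_0$ and all elements of $F_1$ to the partition points, so that each interval of $\vec{I}'$ is either entirely inside or entirely disjoint from $\mathcal{I}_0$ and contains no element of $F_1$. Every valuation of type $\vec{T}$ has a refined type $\vec{T}'$ in $\vec{I}'$, and by pigeonhole among the finitely many such refinements one sub-type remains $\vec{T}'$-essential. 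Using the freedom $V > s$ in the essentiality witness, we can moreover select $\vec{T}'$ so that its union is disjoint from $\mathcal{I}_0$ and contains every element of $F_1$ in its closure (i.e., the $\vec{I}'$-intervals adjacent to each point of $F_1$ belong to $\vec{T}'$).

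We then invoke Lemma~\ref{lem:mtype-interval-disjoint} to find intervals $J_0, \dots, J_{m_s-1}$ with $J_i \in \inter_\mathbb{Q}(M_s(i))$ and $(\bigcup_j T'_j) \cap (\bigcup_i J_i) = \emptyset$. Because $F_1$ lies within the closure of $\bigcup \vec{T}'$, each $J_i$ is automatically disjoint from $F_1$. Finally, using $\vec{T}'$-essentiality, we take a valuation $V = (\vec{B}, \vec{D})$ of type $\vec{T}'$ satisfying $\varphi_s$ with every element of $V$ strictly beyond the rightmost endpoint of $\mathcal{I}_0$ and larger than $\max F_1$; then $\vec{B} \subseteq \bigcup \vec{T}'$ is disjoint from $\mathcal{I}_0$, and disjoint from $\bigcup_i J_i$ by Lemma~\ref{lem:mtype-interval-disjoint}. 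We commit by setting $F_1^{(s+1)} = F_1 \cup \bigcup \vec{B}$ and $\mathcal{I}_0^{(s+1)} = \mathcal{I}_0 \cup \bigcup_i J_i$, which preserves the invariant $F_1 \cap \mathcal{I}_0 = \emptyset$.

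The main obstacle is the refinement step: producing a sub-type $\vec{T}'$ that both witnesses essentiality and is compatible with all prior commitments. This is handled by the interplay between the pigeonhole argument (essentiality survives passage to some sub-type in any finite refinement) and the flexibility provided by the $V > s$ clause (one may push valuations arbitrarily far right, so that one essential sub-type necessarily has its intervals in the unbounded region beyond $\mathcal{I}_0$, hence in particular disjoint from $\mathcal{I}_0$). The verification that the resulting $A_0, A_1$ is $0$-$\er$-fair for $\emptyset$ then follows by noting that for any $\Sigma^0_1$ essential $M$-formula $\varphi$, it appears as some $\varphi_s$, and the valuation $V$ constructed at stage $s$ diagonalizes against the final $A_0, A_1$ since $\bigcup \vec{B} \subseteq F_1^{(s+1)} \subseteq A_1$ and each $J_i \subseteq \mathcal{I}_0^{(s+1)} \subseteq A_0$.
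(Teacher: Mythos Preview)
Your proposal has two genuine gaps.

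\textbf{Complexity.} You write that $\emptyset'$ decides whether $\varphi_s$ is essential, calling this a $\Sigma^0_2$ question. But $\emptyset'$ only decides $\Delta^0_2$ predicates, not $\Sigma^0_2$ ones; and in fact $\vec{T}$-essentiality is $\Pi^0_2$ (a $\forall s\,\exists V$ statement), so full essentiality is $\Sigma^0_3$. Your construction is therefore not $\emptyset'$-computable, and you lose the conclusion that $A_0,A_1$ are $\Delta^0_2$. The paper avoids this entirely by enumerating quadruples $\langle M,\varphi,\vec{I},\vec{T}\rangle$ rather than pairs $(M,\varphi)$: at stage $s$, with $[0,b]_\Nb$ already assigned, it asks only the $\Sigma^0_1$ question ``is there an $M$-valuation $V>b$ of type $\vec{T}$ with $\varphi(V)$?''. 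If no, then $\varphi$ is not $\vec{T}$-essential and the requirement is vacuous; if yes, one such $V$ is found effectively in $\emptyset'$.

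\textbf{Compatibility.} Your refinement step asserts that one can select the essential sub-type $\vec{T}'$ so that $\bigcup\vec{T}'$ is disjoint from $\mathcal{I}_0$ and contains every point of $F_1$ in its closure, invoking ``the freedom $V>s$''. But $V>s$ constrains the $\Nb$-indices of the rationals in $V$, not their $\Qb$-positions; valuations with arbitrarily large $\Nb$-index can land in any interval of $\vec{I}'$, including those inside $\mathcal{I}_0$. Pigeonhole gives you \emph{some} essential sub-type, with no control over which one. And since $\vec{T}'$ is only an $m$-tuple of intervals, there is no reason its closure should contain the (arbitrary) finite set $F_1$, so your argument that the $J_i$ from Lemma~\ref{lem:mtype-interval-disjoint} automatically miss $F_1$ fails. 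The paper sidesteps all compatibility issues by working in $\Nb$-order: at each stage the assignments to $A_0,A_1$ already cover $[0,b]_\Nb$, the new valuation $V$ lies in $(b,d]_\Nb$, and one assigns only the rationals in $(b,d]_\Nb$ (putting $J_i\cap(b,d]_\Nb$ into $A_0$ and the rest into $A_1$). No refinement of partitions or tracking of prior commitments is needed.
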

\begin{proof}
The proof is done by a no-injury priority construction.
Interpret each~$s \in \omega$ as a tuple~$\langle M, \varphi, \vec{I}, \vec{T} \rangle$ where
$M$ is an~$m$-by-$m$ matrix,~$\varphi$ is a~$\Sigma^0_1$~$M$-formula,
$\vec{I}$ is an~$M$-partition, and~$\vec{T}$ is an~$(\vec{I}, m)$-type.
We want to satisfy the following requirements for each~$s = \langle M, \varphi, \vec{I}, \vec{T} \rangle$.

\begin{quote}
$\Rcal_s$: If~$\varphi$ is~$\vec{T}$-essential, then~$\varphi(V)$ holds
for some~$M$-valuation~$V$ diagonalizing against~$A_0, A_1$.
\end{quote}

The requirements are given a standard priority ordering.
The sets~$A_0$ and~$A_1$ are constructed by a~$\emptyset'$-computable list of 
finite approximations~$A_{i,0} \subseteq A_{i,1} \subseteq \dots$
such that all elements added to~$A_{i,s+1}$ from~$A_{i,s}$
are strictly greater than the maximum of~$A_{i,s}$ (in the~$\Nb$ order) for each~$i < 2$. 
We then let~$A_i = \bigcup_s A_{i,s}$ which will be a~$\Delta^0_2$ set.
At stage 0, set~$A_{0,0} = A_{1,0} = \emptyset$. Suppose that at stage~$s$,
we have defined two disjoint finite sets~$A_{0,s}$ and~$A_{1,s}$ such that
\begin{itemize}
	\item[(i)]~$A_{0,s} \cup A_{1,s} = [0,b]_\Nb$ for some integer~$b \geq s$
	\item[(ii)]~$\Rcal_{s'}$ is satisfied for every~$s' < s$
\end{itemize}
Let~$\Rcal_s$ be the requirement such that~$s = \langle M, \varphi, \vec{I}, \vec{T} \rangle$.
Decide~$\emptyset'$-computably whether there is some~$M$-valuation~$V > b$ of type~$\vec{T}$
such that~$\varphi(V)$ holds. If so, effectively fetch~$\vec{T} = T_0, \dots, T_{m-1}$
and such a~$V > b$. Let~$d$ be an upper bound (in the~$\Nb$ order) on the rationals in~$V$.
By Lemma~\ref{lem:mtype-interval-disjoint}, for each~$i < m$,
there is some~$J_i \in \inter_\Qb(M(i))$ such that
\[
(\bigcup_{j < m} T_j) \cap (\bigcup_{i < m} J_i) = \emptyset
\]
Set~$A_{0,s+1} = A_{0,s} \bigcup_{i < m} J_i \cap (b,d]_\Nb$ and~$A_{1,s+1} = [0,d]_\Nb \setminus A_{0,s+1}$.
This way,~$A_{0,s+1} \cup A_{1,s+1} = [0, d]_\Nb$.
By the previous equation,~$\bigcup_{j < m} T_j \cap (b, d]_\Nb \subseteq [0,d]_\Nb \setminus A_{0,s+1}$ 
and the requirement~$\Rcal_s$ is satisfied.
If no such~$M$-valuation is found, the requirement~$\Rcal_s$ is vacuously satisfied.
Set~$A_{0,s+1} = A_{0,s} \cup \{b\}$ and~$A_{1,s+1} = A_{1,s}$.
This way,~$A_{0,s+1} \cup A_{1,s+1} = [0, b+1]_\Nb$.
In any case, go to the next stage. This completes the construction.
\end{proof}

\begin{theorem}\label{thm:er22-not-er-fairness}
$\erp$ does not admit~$\er$-fairness preservation.
\end{theorem}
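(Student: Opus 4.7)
The plan is to falsify $\er$-fairness preservation for $\erp$ by building a concrete counterexample. Take the $\Delta^0_2$ partition $A_0, A_1$ of $\Qb$ given by Lemma~\ref{lem:partition-emptyset-er-fair}, so that $\emptyset$ is $0$-$\er$-fair (hence $\er$-fair) for $A_0, A_1$; set $C = \emptyset$. Via Shoenfield's limit lemma, fix a computable approximation $g : \Qb \times \omega \to 2$ with $\lim_s g(x, s) = A_0(x)$ and a computable enumeration $\Qb = \{q_0, q_1, \dots\}$, and define the computable coloring $f : [\Qb]^2 \to 2$ by $f(q_i, q_j) = g(q_i, j)$ whenever $q_i <_\Qb q_j$. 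Then $f$ is stable, with $f(x, \cdot)$ converging to $A_0(x)$, and is a $C$-computable instance of $\erp$.

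Classify the solutions: an infinite homogeneous solution $Y$ with color $0$ must satisfy $Y \subseteq A_1$ and have order type $\omega$, while a homogeneous solution $Y$ of order type $\eta$ with color $1$ must satisfy $Y \subseteq A_0$ and be dense in $\Qb$. We show that in either case, $Y \oplus C = Y$ fails to be $n$-$\er$-fair for $A_0, A_1$ for every $n$, hence fails to be $\er$-fair, contradicting $\er$-fairness preservation. Concretely, for each $n$ we produce an $m$-by-$2^n m$ matrix $M$ and a $\Sigma^{0, Y}_1$ essential $M$-formula $\varphi$ with no diagonalizing $M$-valuation satisfying $\varphi$.

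In the dense case $Y \subseteq A_0$, the formula $\varphi(V) \equiv (\bigcup \vec{B}) \cap Y \neq \emptyset$ works for every $n$: essentiality is secured by taking the type $\vec{T}$ so that $T_0$ is the rightmost, unbounded interval of the chosen $M$-partition, using the density of $Y$ to populate $\vec{B}$ with $Y$-elements of arbitrarily large rational value; and no satisfying $V$ can diagonalize, since $\bigcup \vec{B}$ then meets $Y \subseteq A_0$, contradicting the diagonalization clause $\bigcup \vec{B} \subseteq A_1$.

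The main obstacle is the color-$0$ case, where $Y \subseteq A_1$ is only of order type $\omega$ and may be bounded in $\Qb$. The dual strategy is to force every interval of some row of $M$ to contain an $A_1$-witness, blocking the clause ``some $I \subseteq A_0$'' of diagonalization. Unfortunately a naive formula like ``$\forall I \in \inter_\Qb(M(i))\ D_{i,I} \cap Y \neq \emptyset$'' loses essentiality on the bounded intervals of $\inter_\Qb(M(i))$, which admit no rationals beyond $s$ for large $s$. Overcoming this requires a more intricate $\Sigma^{0,Y}_1$ formula that leverages the computable coloring $f$ itself---using that $f(x,y) = 0$ on pairs from $Y$ is a computable witness to $A_0$-membership of $x$---to produce $A_1$-witnesses of arbitrarily large rational value even inside bounded intervals, uniformly in $n$. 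Carrying out this refinement, and arranging the matrix-size amplification allowed by the parameter $n$ in the definition of $n$-$\er$-fairness, is the technical heart of the argument.
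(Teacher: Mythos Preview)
Your overall architecture matches the paper's: fix the partition $A_0,A_1$ from Lemma~\ref{lem:partition-emptyset-er-fair}, turn it into a computable stable coloring via the limit lemma, and show that every $\erp$-solution $Y$ fails to be $n$-$\er$-fair for all~$n$. The gap is that you have chosen the wrong limiting convention, and this is precisely what manufactures your ``main obstacle''.

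You let $g(x,s)\to A_0(x)$, so a colour-$0$ (order-type-$\omega$) solution lands in $A_1$ and a colour-$1$ (dense) solution lands in $A_0$. But recall what diagonalization requires of an $M$-valuation $V=(\vec B,\vec D)$: $\bigcup\vec B\subseteq A_1$ \emph{and}, for each row $i$, some interval $I$ with $D_{i,I}\subseteq A_0$. To block diagonalization one therefore wants either to force $\bigcup\vec B$ to meet $A_0$, or to force every $D_{i,I}$ in some row to meet $A_1$. The first kind of formula (``$U$ meets $Y$'') only needs $Y$ infinite; the second (``each $C_I$ meets $Y$'') needs $Y$ to meet every interval of the row, i.e.\ essentially needs $Y$ dense. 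With your convention the $\omega$-type solution is the one inside $A_1$, so you are forced into the ``each $C_I$ meets $Y$'' formula in exactly the case where $Y$ need \emph{not} be dense. That is your obstacle, and it is self-inflicted; the vague workaround via $f$ is neither carried out nor correct as stated (in your setup $f(x,y)=0$ witnesses $A_1$-membership, not $A_0$-membership).

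The paper simply uses the opposite convention, taking $f$ so that $x\in A_{\lim_s f(x,s)}$; then a colour-$0$ solution $D$ of order type $\omega$ lies in $A_0$ and a dense colour-$1$ solution lies in $A_1$. With $m=1$ and $M$ a $1$-by-$2^n$ row, both cases are immediate. If $D\subseteq A_0$ has order type~$\omega$: let $\varphi(U,\vec C)$ say ``$U$ is a nonempty subset of $D$''; essentiality holds by pigeonhole on the finitely many $(\inter_\Qb(M),1)$-types, and any satisfying valuation has $B\subseteq D\subseteq A_0$, killing $\bigcup\vec B\subseteq A_1$. If $D\subseteq A_1$ is dense: let $\varphi(U,\vec C)$ say ``each $C_I$ is a nonempty subset of $D\cap I$''; density makes $D\cap I$ infinite for every $I$, giving essentiality, and every $D_I$ meets $A_1$, killing the interval clause. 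No refinement is needed.

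One further slip: you write that bounded intervals ``admit no rationals beyond $s$ for large $s$''. The threshold in ``$V>s$'' is in the $\Nb$-order on (codes of) rationals, not in $\leq_\Qb$; every nondegenerate rational interval contains rationals of arbitrarily large $\Nb$-index. This does not rescue your colour-$0$ case (the real issue is that an $\omega$-type $Y$ need not meet every interval at all), but it means your essentiality argument in the dense case is over-engineered: any type works, not only the rightmost unbounded one.
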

\begin{proof}
Let~$A_0 \cup A_1 = \Qb$ be the~$\Delta^0_2$ partition constructed in Lemma~\ref{lem:partition-emptyset-er-fair}.
By Shoenfield's limit lemma~\cite{Shoenfield-1959}, there is a computable function~$f : [\Qb]^2 \to 2$ such 
that for each~$x \in \Qb$,~$\lim_s f(x, s)$ exists and~$x \in A_{\lim_s h(x, s)}$.
Let~$D \subseteq \Qb$ be an~$\erp$-solution to~$f$.

Fix any~$n \geq 0$. 
We claim that~$D$ is not~$n$-$\er$-fair for~$A_0, A_1$.
Let~$M = (x_0 <_\Qb \dots <_\Qb x_{2^n-1})$ be the~$1$-by-$2^n$ matrix composed
of the~$2^n$ first elements of~$D$ is the natural order. We have two cases.

\begin{itemize}
	\item Case 1:~$D$ is an infinite 0-homogeneous set.
	Let~$\varphi(U, C_I : I \in \inter_\Qb(M))$ be the~$\Sigma^{0,D}_1$~$M$-formula
	which holds if~$U$ is a non-empty subset of~$D$.
	We claim that~$\varphi$ is essential.
	Since~$\Qb \setminus \bigcup \inter_\Qb(M)$ is finite, then for every~$s$,
  there is some~$y \in D \cap \bigcup \inter_\Qb(M)$ such that~$y >_\Nb s$.
	By Lemma~\ref{lem:tuple-has-mtype}, there is a~$(\inter_\Qb(M),1)$-type~$T$ such that~$y \in T$. 
	The singleton~$\{y\}$ forms an~$M$-valuation~$V > s$ of type~$T$
	such that~$\varphi(V)$ holds. Therefore, by the pigeonhole principle,
	the~$M$-formula~$\varphi$ is~$T$-essential for some~$(\inter_\Qb(M),1)$-type~$T$.
	For every~$M$-valuation~$V = (B, D_I : I \in \inter_\Qb(M))$ such that~$\varphi(V)$ holds,
	it cannot be the case that~$B \subseteq A_1$ since it would contradict
	the fact that~$B$ is a non-empty subset of~$D \subseteq A_0$.

	\item Case 2:~$D$ is a dense 1-homogeneous set.
	Let~$\varphi(U, C_{0, I} : I \in \inter_\Qb(M))$ be the~$\Sigma^{0,D}_1$~$M$-formula
	which holds if for each~$I \in \inter_\Qb(M)$,~$C_I$ is a non-empty subset of~$D \cap I$.
	We claim that~$\varphi$ is essential.
	Since~$D$ is dense, there is some collection~$(y_I \in D \cap I : I \in \inter_\Qb(M))$
	such that~$y_I >_\Nb s$. Taking~$D = \emptyset$, the~$y$'s form an~$M$-valuation~$V > s$ of every~$(\inter_\Qb(M), 1)$-type
	such that~$\varphi(V)$ holds. Therefore, the~$M$-formula~$\varphi$ is~$T$-essential for every~$(\inter_\Qb(M), 1)$-type~$T$. 
	For every~$M$-valuation~$V = (B, D_I : I \in \inter_\Qb(M))$ such that~$\varphi(V)$ holds,
	there is no~$I \in \inter_\Qb(M)$ such that~$D_I \subseteq A_0$ since it would contradict
	the fact that~$D_I$ is a non-empty subset of~$D \subseteq A_1$.
\end{itemize}
In both cases,~$\varphi$ is essential, but has no~$M$-valuation
which diagonalizes against~$A_0, A_1$, so~$S$ is not~$n$-$\er$-fair for~$A_0, A_1$.
\end{proof}

We now prove~$\er$-fairness preservation for various principles in reverse mathematics,
namely, weak K\"onig's lemma, cohesiveness and~$\rt^2_2$. We prove independently that
they admit~$\er$-fairness preservation, and then use the compositional
nature of the notion of preservation to deduce that the conjunction of these
principles do not imply~$\erp$ over~$\rca$.

\begin{theorem}\label{thm:wkl-n-er-fairness}
For every~$n \geq 0$,~$\wkl$ admits~$n$-$\er$-fairness preservation.
\end{theorem}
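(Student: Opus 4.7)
The plan is to construct $P \in [T]$ via forcing. Fix $A_0, A_1 \subseteq \Qb$ and $C$ which is $n$-$\er$-fair for $A_0, A_1$, and let $T \subseteq 2^{<\omega}$ be an infinite $C$-computable binary tree. Conditions will be infinite $C$-computable subtrees $U \subseteq T$ (equivalently, non-empty $\Pi^{0,C}_1$ subclasses of $[T]$), ordered by inclusion. The requirements are indexed by codes $e$ specifying an $m$-by-$2^nm$ matrix $M_e$ together with an $M_e$-formula $\varphi_e$: the requirement $\mathcal{R}_e$ asserts that either $\varphi_e^{P \oplus C}$ is not essential, or there is an $M_e$-valuation $V$ diagonalizing against $A_0, A_1$ with $\varphi_e^{P \oplus C}(V)$.

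The main step will be an extension lemma: every condition $U$ admits an extension $U' \subseteq U$ forcing $\mathcal{R}_e$. Writing $\varphi_e^{P \oplus C}(V) = \exists k\, \theta_e(V, k, P \res k, C)$ for a computable predicate $\theta_e$, I introduce the auxiliary $\Sigma^{0,C}_1$ $M_e$-formula
\[
\psi_U^e(V) := \exists k \, \forall \sigma \in U \cap 2^k, \, \exists j \leq k, \, \theta_e(V, j, \sigma \res j, C),
\]
which says that by some uniform level $k$ every path through $U$ has already witnessed $\varphi_e(V)$. Two observations drive the argument: if $\psi_U^e(V)$ holds then $\varphi_e^{P \oplus C}(V)$ holds for every $P \in [U]$; and if $\psi_U^e(V)$ fails then the $C$-computable subtree $U^V := \{\sigma \in U : \forall k \leq |\sigma|, \, \neg \theta_e(V, k, \sigma \res k, C)\}$ is infinite by K\"onig's lemma.

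If $\psi_U^e$ is essential at level $C$, the $n$-$\er$-fairness of $C$ yields an $M_e$-valuation $V$ diagonalizing against $A_0, A_1$ with $\psi_U^e(V)$; keeping $U' = U$ satisfies $\mathcal{R}_e$ uniformly through this $V$. If $\psi_U^e$ is not essential, then for every type $\vec{T}$ there is a threshold $s_{\vec{T}}$ such that $\psi_U^e(V)$ fails for every valuation $V > s_{\vec{T}}$ of type $\vec{T}$, and one must refine $U$ to a $\Pi^{0,C}_1$ subclass in which every path $G$ forces $\varphi_e^{G \oplus C}$ to be not essential.

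The hardest part will be this non-essential case: although each $U^V$ is individually infinite, the intersection $\bigcap_V U^V$ over all relevant $V$ may be empty, so the constraints $\neg \varphi_e^{G \oplus C}(V)$ coming from distinct $V$ need not be jointly consistent. My plan is to enlarge the notion of condition to pairs $(U, \mathcal{V})$, where $\mathcal{V}$ is a finite list of commitments $(e', V')$ imposing $\neg \varphi_{e'}^{P \oplus C}(V')$ and where $U$ is restricted to the compatible $\Pi^{0,C}_1$ subtree. At each stage one either applies Case A (accepting a diagonalizing witness and leaving $U$ unchanged) or Case B (adding one more commitment per type to $\mathcal{V}$, absorbing the relevant $V > s_{\vec{T}}$ one at a time); an iterated K\"onig's lemma combined with the $n$-$\er$-fairness of $C$ should ensure the compatible subtree remains infinite throughout any finite sequence of such enlargements. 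Because each matrix has only finitely many types, a sufficiently generic filter will eventually accumulate the commitments needed to make $\varphi_e^{P \oplus C}$ non-essential whenever Case B applies, yielding the desired path $P$.
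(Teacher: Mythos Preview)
Your overall framework---forcing with $C$-computable subtrees of $T$, defining an auxiliary $\Sigma^{0,C}_1$ formula $\psi$, and splitting on whether $\psi$ is essential---matches the paper exactly. The gap is in your Case B.

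Your commitment scheme cannot succeed. To force $\varphi_e^{P\oplus C}$ to be non-essential you must rule out \emph{every} valuation $V>s_{\vec T}$ of type $\vec T$, and there are infinitely many such $V$. A finite list of commitments $(e',V')$ only blocks finitely many valuations; no finite stage of your filter, and hence no path $P$, is ever forced to make $\varphi_e^{P\oplus C}$ non-essential. Your remark that ``each matrix has only finitely many types'' addresses the wrong infinity: the problem is the infinitely many valuations \emph{per type}, not the number of types. Invoking $n$-$\er$-fairness of $C$ in Case B is also misplaced, since in that case $\psi$ is not essential and fairness gives you nothing.

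The missing idea is a small change to $\psi$. Instead of
\[
\psi_U^e(V)\;:=\;\exists k\ \forall\sigma\in U\cap 2^k\ \exists j\le k\ \theta_e(V,j,\sigma\res j,C),
\]
quantify over sub-valuations:
\[
\psi_U^e(V)\;:=\;\exists k\ \forall\sigma\in U\cap 2^k\ \exists\,\tilde V\subseteq V\ \exists j\le k\ \theta_e(\tilde V,j,\sigma\res j,C).
\]
Now suppose $\psi_U^e$ is not $\vec T$-essential, with threshold $t$. Then the \emph{single} $C$-computable tree
\[
U'=\{\tau\in U:\ (\forall\ M_e\text{-valuation }V>t\text{ of type }\vec T)\ \neg\varphi_e(\tau,V)\}
\]
is infinite: if $U'\cap 2^k=\emptyset$ for some $k$, each $\sigma\in U\cap 2^k$ has a witness $V_\sigma>t$ of type $\vec T$; the pointwise union $V=\bigcup_\sigma V_\sigma$ is again a valuation $>t$ of type $\vec T$, and $\psi_U^e(V)$ holds via the sub-valuations $V_\sigma\subseteq V$, contradicting non-essentiality. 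Thus one tree refinement kills \emph{all} relevant $V$ simultaneously, eliminating the need for commitments. In the essential case, fairness gives a diagonalizing $V$ with $\psi_U^e(V)$; each path $P$ then satisfies $\varphi_e^{P\oplus C}(\tilde V)$ for some $\tilde V\subseteq V$, and $\tilde V$ still diagonalizes since that property is downward closed under $\subseteq$. The paper also indexes requirements by a fixed pair $(\vec I,\vec T)$ rather than all types at once, which streamlines the bookkeeping.
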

\begin{proof}
Let~$C$ be a set~$n$-$\er$-fair for some sets~$A_0, A_1 \subseteq \Qb$,
and let~$\Tcal \subseteq 2^{<\omega}$ be a~$C$-computable infinite binary tree.
We construct an infinite decreasing sequence of~$C$-computable subtrees~$\Tcal = \Tcal_0 \supseteq \Tcal_1 \supseteq \dots$
such that for every path~$P$ through~$\bigcap_s \Tcal_s$,~$P \oplus C$ is~$n$-$\er$-fair for~$A_0, A_1$.
Note that the intersection~$\bigcap_s \Tcal_s$ is non-empty since the~$\Tcal$'s are infinite trees.
More precisely, if we interpret~$s$ as a tuple~$\langle m, M, \vec{I}, \vec{T}, \varphi \rangle$ where 
$M$ is an~$m$-by-$2^nm$ matrix,~$\vec{I}$ is an~$M$-partition,~$\vec{T}$ is an~$(\vec{I}, m)$-type, and~$\varphi(G,U)$
is a~$\Sigma^{0,C}_1$~$M$-formula, we want to satisfy the following requirement.

\begin{quote}
$\Rcal_s$ : For every path~$P$ through~$\Tcal_{s+1}$, either~$\varphi(P, U)$ is not~$\vec{T}$-essential,
or~$\varphi(P, V)$ holds for some~$M$-valuation~$V$ diagonalizing against~$A_0, A_1$.
\end{quote}

Given two~$M$-valuations~$V_0 = (\vec{R}, \vec{S})$ and~$V_1 = (\vec{D}, \vec{E})$, 
we write~$V_0 \subseteq V_1$
to denote the pointwise subset relation, that is, 
$R_j \subseteq D_j$ and~$S_{i,I} \subseteq E_{i,I}$
for every~$i < m$,~$j < n$ and~$I \in \inter_\Qb(M(i))$.
At stage~$s = \langle m, M, \vec{I}, \vec{T}, \varphi \rangle$, given some infinite, computable binary tree~$\Tcal_s$, 
define the~$m$-by-$n$~$\Sigma^{0,C}_1$~$M$-formula
\[
\psi(U) = (\exists n)(\forall \tau \in T_s \cap 2^n)(\exists \tilde{V} \subseteq U)\varphi(\tau, \tilde{V})
\]
We have two cases.
In the first case,~$\psi(U)$ is not~$\vec{T}$-essential with some witness~$t$. By compactness,
the following set is an infinite~$C$-computable subtree of~$\Tcal_s$:
\[
\Tcal_{s+1} = \{ \tau \in \Tcal_s : (\mbox{for every } M\mbox{-valuation } V > t 
	\mbox{ of type } \vec{T})\neg \varphi(\tau, V) \}
\]
The tree~$\Tcal_{s+1}$ is defined so that~$\varphi(P, U)$
is not~$\vec{T}$-essential for every~$P \in [\Tcal_{s+1}]$.

In the second case,~$\psi(U)$ is~$\vec{T}$-essential. By~$n$-$\er$-fairness of~$C$ for~$A_0, A_1$,
there is an~$M$-valuation~$V = (\vec{R}, \vec{S})$ diagonalizing against~$A_0, A_1$ such that~$\psi(\vec{R}, \vec{S})$ holds.
We claim that for every path~$P \in [\Tcal_s]$,
$\varphi(P, \tilde{V})$ holds for some~$M$-valuation~$\tilde{V}$ diagonalizing against~$A_0, A_1$.
Fix some path~$P \in [\Tcal_s]$. Unfolding the definition of~$\psi(V)$, 
there is some~$u$ such that~$\varphi(P \uh u, \tilde{V})$ holds
for some~$M$-valuation~$\tilde{V} = (\tilde{D}, \vec{E}) \subseteq V = (\vec{R}, \vec{S})$. 
By definition of~$V$ diagonalizing against~$A_0, A_1$, 
$\bigcup \vec{R} \subseteq A_1$ and for every~$i < m$, there is some~$I \in \inter_\Qb(M(i))$ 
such that~$S_{i,I} \subseteq A_0$. In particular,~$\bigcup \vec{D} \subseteq \bigcup \vec{R} \subseteq A_1$
and for every~$i < m$, there is some~$I \in \inter_\Qb(M(i))$  such that~$E_{i,I} \subseteq S_{i,I} \subseteq A_0$.
Therefore,~$\tilde{V}$ diagonalizes against~$A_0, A_1$. This completes the claim.
Take~$\Tcal_{s+1} = \Tcal_s$ and go to the next stage.
This completes the proof of Theorem~\ref{thm:wkl-n-er-fairness}.
\end{proof}

As previously noted, preserving~$n$-$\er$-fairness for every~$n$ implies preserving~$\er$-fairness.
However, we really need the fact that~$\wkl$ admits~$n$-$\er$-fairness preservation
and not only~$\er$-fairness preservation in the proof of Theorem~\ref{thm:rt12-strong-er-fairness}.

\begin{corollary}\label{cor:wkl-er-fairness-preservation}
$\wkl$ admits~$\er$-fairness preservation.
\end{corollary}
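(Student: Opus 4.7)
The plan is to derive the corollary immediately from Theorem~\ref{thm:wkl-n-er-fairness} by unpacking the definition of $\er$-fairness. Recall that a set $X$ is $\er$-fair for $A_0, A_1$ precisely when it is $n$-$\er$-fair for $A_0, A_1$ for some $n \geq 0$, so $\er$-fairness is really just the union over $n$ of the $n$-$\er$-fairness notions, and preservation at a single fixed level $n$ is strictly stronger than preservation of the disjunction.

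Concretely, I would fix sets $A_0, A_1 \subseteq \Qb$, a set $C$ that is $\er$-fair for $A_0, A_1$, and a $C$-computable infinite binary tree $\Tcal \subseteq 2^{<\omega}$. By definition of $\er$-fairness, there exists some $n \geq 0$ such that $C$ is $n$-$\er$-fair for $A_0, A_1$. Applying Theorem~\ref{thm:wkl-n-er-fairness} at this specific value of $n$ to the instance $\Tcal$ yields a path $P \in [\Tcal]$ such that $P \oplus C$ is $n$-$\er$-fair for $A_0, A_1$. A fortiori, $P \oplus C$ is $\er$-fair for $A_0, A_1$, as required.

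There is essentially no obstacle here; the argument is a one-line application of the stronger preceding theorem. The only subtlety worth flagging, which the remark just before the corollary already highlights, is that we cannot run this deduction in reverse: for the later argument (Theorem~\ref{thm:rt12-strong-er-fairness}) we really need $n$-$\er$-fairness preservation at a fixed $n$, not just $\er$-fairness preservation. So the corollary is the weaker, qualitatively clean consequence of Theorem~\ref{thm:wkl-n-er-fairness}, useful for the compositional combination of preservation properties across $\wkl$, cohesiveness, and $\rt^2_2$ that will yield the separation of $\serp$ from $\rt^2_2 + \wkl$.
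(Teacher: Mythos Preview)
Your proposal is correct and matches the paper's approach: the corollary is stated without proof precisely because it follows immediately from Theorem~\ref{thm:wkl-n-er-fairness} by the observation you give, that $\er$-fairness is by definition $n$-$\er$-fairness for some $n$. Your remark about why the stronger $n$-level preservation is needed later is also accurate and echoes the paper's own comment preceding the corollary.
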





Recall Proposition \ref{prop:CJS_decomp}, that~$\RT^2_2$ is equivalent over~$\RCA_0$ to~$\COH + \SRT^2_2$. A more natural way to think of this is as a decomposition of~$\rt^2_2$
into~$\COH$ and a non-effective instances of~$\rt^1_2$. 
Indeed, given an effective instance~$f : [\omega]^2 \to 2$ of~$\rt^2_2$,
$\coh$ implies the existence of an infinite set~$H$ such that~$f : [H]^2 \to 2$ is stable.
By Schoenfield's limit lemma~\cite{Shoenfield-1959},
the stable coloring~$f : [H]^2 \to 2$ can be seen as the~$\Delta^0_2$ approximation of a~$\emptyset'$-computable
instance~$\tilde{f} : H \to 2$ of~$\rt^1_2$. Moreover, we can~$H$-compute an infinite~$f$-homogeneous set 
from any~$\tilde{f}$-homogeneous set.
We shall therefore prove independently~$\er$-fairness preservation of~$\coh$
and strong~$\er$-fairness preservation of~$\rt^1_2$ to deduce that~$\rt^2_2$ admits~$\er$-fairness preservation.

\begin{theorem}\label{thm:coh-n-er-fairness}
For every~$n \geq 0$,~$\coh$ admits~$n$-$\er$-fairness preservation.
\end{theorem}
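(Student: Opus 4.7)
The plan is to follow the template of Theorem~\ref{thm:wkl-n-er-fairness}, replacing forcing with infinite binary subtrees by Mathias forcing with $C$-computable reservoirs. Let $C$ be $n$-$\er$-fair for some $A_0, A_1 \subseteq \Qb$ and let $\vec{R} = \seq{R_0, R_1, \dots}$ be a $C$-computable uniform sequence, i.e., a $\coh$-instance. I would use Mathias conditions $(F, X)$ where $F$ is finite, $X$ is an infinite $C$-computable subset of $\omega$, and $F < X$, with the standard extension relation; a sufficiently generic filter $\{(F_s, X_s) : s \in \omega\}$ yields $G = \bigcup_s F_s$. On infinitely many dedicated stages I would enforce cohesiveness by extending the current condition $(F, X)$ to $(F, X \cap R_i)$ or $(F, X \cap \overline{R_i})$, whichever has infinite reservoir; since $\vec{R} \leq_T C$, the new reservoir remains $C$-computable, so this stays within the notion of forcing.

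The core of the argument is handling the fairness requirements. I would interpret the remaining stages as tuples $s = \langle m, M, \vec{I}, \vec{T}, \varphi\rangle$ where $M$ is an $m$-by-$2^n m$ matrix, $\vec{I}$ is an $M$-partition, $\vec{T}$ is an $(\vec{I}, m)$-type, and $\varphi(G, U)$ is a $\Sigma^{0,C}_1$ $M$-formula with $G$ as a distinguished free set variable. At stage $s$ with current condition $(F, X)$, I would consider the auxiliary $\Sigma^{0,C}_1$ $M$-formula
\[
\psi(U) \equiv (\exists \text{ finite } E \subseteq X)(\exists \tilde V \subseteq U)\,\varphi(F \cup E, \tilde V),
\]
in which $\varphi(F \cup E, \tilde V)$ is evaluated by treating $F \cup E$ as a finite witness of the outer $\Sigma^{0,C}_1$ structure of $\varphi$. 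If $\psi$ is not $\vec{T}$-essential (above some threshold $t$), I would leave the condition unchanged; since $\varphi$ is $\Sigma^{0,C}_1$, any $\tilde V > t$ of type $\vec{T}$ with $\varphi(G, \tilde V)$ would arise from a finite piece of $G$ that already witnesses $\psi(\tilde V)$, so $\varphi(G, U)$ is not $\vec{T}$-essential for the generic $G$ either. Otherwise $\psi$ is $\vec{T}$-essential, and $n$-$\er$-fairness of $C$ produces an $M$-valuation $V$ diagonalizing against $A_0, A_1$ with $\psi(V)$; unfolding gives a finite $E \subseteq X$ and a sub-valuation $\tilde V \subseteq V$ with $\varphi(F \cup E, \tilde V)$, and since sub-valuations of diagonalizing valuations are still diagonalizing, extending to $(F \cup E, X \setminus [0, \max E])$ forces the desired witness.

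The main obstacle I anticipate is the subtlety of the non-essential case: translating non-$\vec{T}$-essentiality of the finitary formula $\psi$ into non-$\vec{T}$-essentiality of $\varphi(G, U)$ for the generic $G$ relies on the standard but delicate Mathias-forcing calculus that a $\Sigma^{0,C}_1$ property of $G$ is decided by a finite initial segment of $G$ together with the reservoir. Once this is in place, interleaving the cohesiveness and fairness stages produces a sufficiently generic $G$ which is $\vec{R}$-cohesive and such that $G \oplus C$ is $n$-$\er$-fair for $A_0, A_1$.
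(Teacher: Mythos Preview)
Your proposal is correct and follows essentially the same approach as the paper: Mathias forcing with $C$-computable reservoirs, defining an auxiliary $\Sigma^{0,C}_1$ formula $\psi$ that existentially quantifies over finite extensions within the reservoir, and invoking $n$-$\er$-fairness of $C$ to split into the non-essential and diagonalizing cases. The only cosmetic differences are that the paper handles all types at once (its Lemma~\ref{lem:coh-preservation-lemma} concludes ``not essential'' rather than ``not $\vec{T}$-essential'') and omits the sub-valuation quantifier $\exists \tilde V \subseteq U$ you borrowed from the $\wkl$ argument; that quantifier is harmless here but unnecessary, since in the Mathias setting there is no universal quantifier over branches to absorb.
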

\begin{proof}
Let~$C$ be a set~$n$-$\er$-fair for some sets~$A_0, A_1 \subseteq \Qb$,
and let~$R_0, R_1, \dots$ be a~$C$-computable sequence of sets.
We will construct an~$\vec{R}$-cohesive set~$G$ such that 
$G \oplus C$ is~$n$-$\er$-fair for~$A_0, A_1$.
The construction is done by a Mathias forcing, whose conditions are pairs~$(F, X)$
where~$X$ is a~$C$-computable set. The result is a direct consequence of the following lemma.

\begin{lemma}\label{lem:coh-preservation-lemma}
For every condition~$(F, X)$, every~$m$-by-$2^nm$ matrix~$M$
and every~$\Sigma^{0,C}_1$~$M$-formula~$\varphi(G, U)$, 
there exists an extension~$d = (E, Y)$ such that
$\varphi(G, U)$ is not essential for every set~$G$ satisfying~$d$, or~$\varphi(E, V)$ holds
for some~$M$-valuation~$V$ diagonalizing against~$A_0, A_1$.
\end{lemma}
\begin{proof}
Define the~$\Sigma^{0,C}_1$~$M$-formula
$\psi(U) = (\exists G \supseteq F)[G \subseteq F \cup X \wedge \varphi(G, U)]$.
By~$n$-$\er$-fairness of~$C$ for~$A_0, A_1$, 
either~$\psi(U)$ is not essential, or~$\psi(V)$ holds for some~$M$-valuation~$V$
diagonalizing against~$A_0, A_1$.
In the former case, the condition~$(F,X)$ already satisfies the desired property with the same witnesses.
In the latter case, by the finite use property, there exists a finite set~$E$ satisfying~$(F, X)$ such that~$\varphi(E, V)$ holds.
Let~$Y = X \setminus [0, max(E)]$. The condition~$(E, Y)$ is a valid extension.
\end{proof}

Let~$\Fcal = \{c_0, c_1, \dots \}$ be a sufficiently generic filter,
where~$c_s = (F_s, X_s)$, and let~$G = \bigcup_s F_s$.
In particular,~$G$ is infinite and~$\vec{R}$-cohesive.
By Lemma~\ref{lem:coh-preservation-lemma},~$G \oplus C$ is~$n$-$\er$-fair for~$A_0, A_1$.
This completes the proof of Theorem~\ref{thm:coh-n-er-fairness}.
\end{proof}

\begin{corollary}
$\coh$ admits~$\er$-fairness preservation.
\end{corollary}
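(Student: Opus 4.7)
The plan is to deduce this corollary directly from Theorem~\ref{thm:coh-n-er-fairness} by unwinding the definition of $\er$-fairness as the union over $n$ of the $n$-$\er$-fairness notions. There is essentially no work beyond a quantifier swap, so I do not expect any real obstacle.

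More precisely, suppose we are given sets $A_0, A_1 \subseteq \Qb$, a set $C$ that is $\er$-fair for $A_0, A_1$, and a $C$-computable $\coh$-instance $\vec{R} = \seq{R_0, R_1, \dots}$. By definition of $\er$-fairness, there exists some $n \geq 0$ such that $C$ is $n$-$\er$-fair for $A_0, A_1$. Fix this $n$ and apply Theorem~\ref{thm:coh-n-er-fairness} at this same $n$: it yields an $\vec{R}$-cohesive set $G$ such that $G \oplus C$ is $n$-$\er$-fair for $A_0, A_1$. Witnessing $\er$-fairness with this same $n$, we conclude that $G \oplus C$ is $\er$-fair for $A_0, A_1$, which is exactly what is needed.

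The only subtle point to keep in mind is that the value of $n$ witnessing $\er$-fairness for $C$ transfers verbatim to the solution $G \oplus C$, rather than increasing in the process; this is guaranteed by the formulation of Theorem~\ref{thm:coh-n-er-fairness}, where the same parameter $n$ governs both the hypothesis on $C$ and the conclusion on $G \oplus C$. Once this is observed, the corollary follows in one line, and no new forcing construction or combinatorial argument is required.
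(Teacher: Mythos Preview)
Your proposal is correct and matches the paper's reasoning exactly: the paper states the corollary without proof, relying on the earlier remark that preserving $n$-$\er$-fairness for every $n$ immediately yields $\er$-fairness preservation, which is precisely the quantifier unwinding you carry out.
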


The next theorem is the reason why we use the notion of
$\er$-fairness instead of~$n$-$\er$-fairness
in our separation of~$\rt^2_2$ from~$\erp$.
Indeed, given an instance of~$\rt^1_2$ and a set~$C$
which is~$n$-$\er$-fair for some sets~$A_0, A_1$, the proof
constructs a solution~$H$ such that~$H \oplus C$ is~$(n+1)$-$\er$-fair for~$A_0, A_1$.

\begin{theorem}\label{thm:rt12-strong-er-fairness}
$\rt^1_2$ admits strong~$\er$-fairness preservation.
\end{theorem}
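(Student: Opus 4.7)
The plan is to prove a stronger inductive claim: for every $n \geq 0$, every $A_0, A_1 \subseteq \Qb$, every set $C$ which is $n$-$\er$-fair for $A_0, A_1$, and every coloring $f : \omega \to 2$, there exists an infinite $f$-homogeneous set $H$ with $H \oplus C$ being $(n+1)$-$\er$-fair for $A_0, A_1$. This suffices for strong $\er$-fairness preservation, since $\er$-fairness is $n$-$\er$-fairness at some unspecified $n$. We construct $H$ via a two-sided Mathias forcing whose conditions are triples $(F_0, F_1, X)$, where $F_0, F_1 \subseteq \omega$ are finite disjoint sets satisfying $F_i \subseteq f^{-1}(i)$, $X$ is a $C$-computable infinite set, and $F_0 \cup F_1 < X$. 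Extensions are componentwise Mathias extensions. The constraint $F_i \subseteq f^{-1}(i)$ is not $C$-decidable but still defines a valid forcing notion. Since $X$ is infinite and partitioned by $f$, the standard extension lemma ensures that at least one side can always be extended, so at least one of $H_0 = \bigcup_s F_{0,s}$ and $H_1 = \bigcup_s F_{1,s}$ is infinite, and the infinite one serves as our solution $H$.

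The crux is the preservation lemma: for every condition $(F_0, F_1, X)$, every $m$-by-$2^{n+1}m$ matrix $M$, and every $\Sigma^{0, G \oplus C}_1$ $M$-formula $\varphi(G, U)$, there is an extension that for each side $i \in \{0,1\}$ either forces $\varphi(G_i, U)$ non-essential or provides an $M$-valuation $V_i$ diagonalizing $A_0, A_1$ with $\varphi(F'_i, V_i)$ holding, where $F'_i$ is the updated $i$-side. To prove it, form the $(2m)$-by-$(2^n \cdot 2m) = (2m)$-by-$(2^{n+1}m)$ matrix $M'$ by stacking two copies of $M$, so each $M'$-valuation naturally decomposes as a pair $(V_0, V_1)$ of $M$-valuations. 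Define the $M'$-formula
\[
\psi(V_0, V_1) \equiv (\exists\, E_0, E_1 \subseteq X)\,[\varphi(F_0 \cup E_0, V_0) \wedge \varphi(F_1 \cup E_1, V_1)],
\]
which is $\Sigma^{0,C}_1$ since $X$ is $C$-computable and $F_0, F_1$ are fixed finite sets. If neither side can force $\varphi$ non-essential, $\psi$ is essential at the product of the witness $M$-types, so applying $n$-$\er$-fairness of $C$ to $M'$ and $\psi$ produces a diagonalizing $M'$-valuation and witnesses $E_0, E_1 \subseteq X$; the componentwise restriction yields diagonalizing $M$-valuations $V_0, V_1$ for both sides simultaneously.

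The main obstacle is ensuring the witnesses $E_0, E_1$ satisfy $E_i \subseteq f^{-1}(i)$, so that $(F_0 \cup E_0, F_1 \cup E_1, Y)$ is a genuine forcing extension for an appropriate $C$-computable sub-reservoir $Y$. Since $\psi$ as stated ignores $f$, this requires a careful refinement of the extension argument: one applies essentiality one side at a time, using that on the ultimately active side $X \cap f^{-1}(i)$ is infinite and contains arbitrarily large $f^{-1}(i)$-subsets witnessing essentiality of the single-side restriction of $\psi$. Once this is handled, the downward closure of $\er$-fairness under Turing reducibility ensures that $H \oplus C$ inherits $(n+1)$-$\er$-fairness from the active $H_i \oplus C$, completing the argument.
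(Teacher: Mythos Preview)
Your proposal has a genuine gap, and you have essentially identified it yourself: after applying $n$-$\er$-fairness to your formula $\psi$, the witnesses $E_0, E_1 \subseteq X$ need not satisfy $E_i \subseteq f^{-1}(i)$. Your suggested fix (``apply essentiality one side at a time, using that $X \cap f^{-1}(i)$ is infinite'') does not work, because the formula to which $n$-$\er$-fairness is applied must be $\Sigma^{0,C}_1$, and $f^{-1}(i)$ is not $C$-computable. You cannot write ``$\exists E \subseteq X \cap f^{-1}(i)$'' in a $\Sigma^{0,C}_1$ way, and if you write ``$\exists E \subseteq X$'' you are back where you started.

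The paper resolves this with a different formula: $\psi(U_0,U_1)$ asserts that for \emph{every} 2-partition $Z_0 \cup Z_1 = X$ there is a side $i$, a finite $E \subseteq Z_i$, and an $M_i$-valuation $V \subseteq U_i$ with $\varphi_i((F \cap B_i) \cup E, V)$. By compactness this is $\Sigma^{0,X \oplus C}_1$. When $\psi$ is essential, $n$-$\er$-fairness gives a diagonalizing valuation, and instantiating the universal quantifier with the \emph{true} partition $Z_i = X \cap B_i$ produces a witness $E$ automatically contained in the correct color class. When $\psi$ is not essential, one obtains for each type a nonempty $\Pi^{0,X \oplus C}_1$ class of partitions witnessing failure, and here the paper invokes $n$-$\er$-fairness preservation for $\WKL$ (Theorem~\ref{thm:wkl-n-er-fairness}) to extract members while keeping the reservoir $n$-$\er$-fair; a pairing argument then forces one side non-essential. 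Both ingredients---the universal-over-partitions trick and the appeal to $\WKL$ preservation---are missing from your outline, and the argument does not go through without them. A secondary issue is that your requirement scheme uses the same $\varphi, M$ on both sides, whereas the pairing argument at the end requires handling $\Qcal_{\varphi_0,M_0,\varphi_1,M_1}$ with independent data on each side.
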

\begin{proof}
Let~$C$ be a set~$n$-$\er$-fair for some sets~$A_0, A_1 \subseteq \Qb$,
and let~$B_0 \cup B_1 = \omega$ be a (non-necessarily effective) 2-partition of~$\omega$.
Suppose that there is no infinite set~$H \subseteq B_0$ or~$H \subseteq B_1$
such that~$H \oplus C$ is~$n$-$\er$-fair for~$A_0, A_1$, since otherwise we are done.
We construct a set~$G$ such that both~$G \cap B_0$ and~$G \cap B_1$ are infinite.
We need therefore to satisfy the following requirements for each~$p \in \omega$.
\[
  \Ncal_p : \hspace{20pt} (\exists q_0 > p)[q_0 \in G \cap B_0] 
		\hspace{20pt} \wedge \hspace{20pt} (\exists q_1 > p)[q_1 \in G \cap B_1] 
\]
Furthermore, we want to ensure that one of~$(G \cap B_0) \oplus C$ 
and~$(G \cap B_1) \oplus C$ is~$\er$-fair for~$A_0, A_1$. To do this, we will satisfy the following requirements
for every integer~$m$, every~$m$-by-$2^{n+1}m$ matrices~$M_0$ and~$M_1$,
every~$\Sigma^{0,C}_1$~$M_0$-formula~$\varphi_0(H, U)$ and~$M_1$-formula~$\varphi_1(H, U)$.
\[
  \Qcal_{\varphi_0, M_0, \varphi_1, M_1} : \hspace{20pt} 
		\Rcal_{\varphi_0,M_0}^{G \cap B_0} \hspace{20pt} \vee \hspace{20pt}  \Rcal_{\varphi_1,M_1}^{G \cap B_1}
\]
where~$\Rcal_{\varphi, M}^H$ holds if~$\varphi(H, U)$ is not essential
or~$\varphi(H, V)$ holds for some~$M$-valuation~$V$ diagonalizing against~$A_0, A_1$.
We first justify that if every~$\Qcal$-requirement is satisfied, then either~$(G \cap B_0) \oplus C$
or~$(G \cap B_1) \oplus C$ is~$(n+1)$-$\er$-fair for~$A_0, A_1$.
By the usual pairing argument, for every~$m$, there is some side~$i < 2$ such that
the following property holds:
\begin{quote}
(P) For every~$m$-by-$2^{n+1}m$ matrix~$M$ and every~$\Sigma^{0,C}_1$~$M$-formula~$\varphi(G \cap B_i, U)$,
either~$\varphi(G \cap B_i, U)$ is not essential, or~$\varphi(G \cap B_i, V)$ holds for some~$M$-valuation~$V$
diagonalizing against~$A_0, A_1$.
\end{quote}
By the infinite pigeonhole principle, 
there is some side~$i < 2$ such that (P) holds for infinitely many~$m$.
By a cropping argument, if (P) holds for~$m$ and~$q < m$, then (P) holds for~$q$.
Therefore (P) holds for every~$m$ on side~$i$. In other words,~$(G \cap B_i) \oplus C$
is~$(n+1)$-$\er$-fair for~$A_0, A_1$.

We construct our set~$G$ by forcing. Our conditions are Mathias conditions~$(F, X)$,
such that~$X \oplus C$ is~$n$-$\er$-fair for~$A_0, A_1$.
We now prove the progress lemma, stating that we can force both~$G \cap B_0$
and~$G \cap B_1$ to be infinite.

\begin{lemma}\label{lem:rt12-er-fairness-progress}
For every condition~$c = (F, X)$, every~$i < 2$ and every~$p \in \omega$
there is some extension~$d = (E, Y)$ such that~$E \cap B_i \cap (p,+\infty) \neq \emptyset$.
\end{lemma}
\begin{proof}
Fix~$c$,~$i$ and~$p$. If~$X \cap B_i \cap (p,+\infty) = \emptyset$,
then~$X \cap (p,+\infty)$ is an infinite subset of~$B_{1-i}$.
Moreover,~$X \cap (p,+\infty)$ is~$n$-$\er$-fair for~$A_0, A_1$, contradicting our hypothesis.
Thus, there is some~$q > p$ such that~$q \in X \cap B_i \cap (p,+\infty)$.
Take~$d = (F \cup \{q\}, X \setminus [0,q])$ as the desired extension.
\end{proof}

Given two~$m$-by-$n$ matrices~$M_0$ and~$M_1$, we denote by~$M_0 \sqcup M_1$
the~$2m$-by-$n$ matrix obtained by putting the adding the rows of~$M_1$ below the rows of~$M_0$.
Note that every~$M_0 \sqcup M_1$-partition is both an~$M_0$-partition
and an~$M_1$-partition. An~$M_0 \sqcup M_1$-valuation~$V$ can be written as~$(V_0, V_1)$,
where~$V_0$ is an~$M_0$-valuation and~$V_1$ is an~$M_1$-valuation.
Note that if~$V$ diagonalizes against~$A_0, A_1$, then so do both~$V_0$ and~$V_1$.
We now prove the core lemma stating that we can satisfy each~$\Qcal$-requirement.
A condition~$c$ \emph{forces} a requirement~$\Qcal$
if~$\Qcal$ is holds for every set~$G$ satisfying~$c$.

\begin{lemma}\label{lem:rt12-er-fairness-forcing}
For every condition~$c = (F, X)$, every integer~$m$, every~$m$-by-$2^{n+1}m$ matrices~$M_0$ and~$M_1$,
and for every~$\Sigma^{0,C}_1$~$M_0$-formula~$\varphi_0(H, U)$ and~$M_1$-formula~$\varphi_1(H, U)$,
there is an extension~$d = (E, Y)$ forcing~$\Qcal_{\varphi_0, M_0, \varphi_1, M_1}$.
\end{lemma}
\begin{proof}
Let~$\psi(U_0,U_1)$ be the~$\Sigma^{0,X \oplus C}_1$~$M_1 \sqcup M_2$-formula which holds
if for every 2-partition~$Z_0 \cup Z_1 = X$, there is some~$i < 2$,
some finite set~$E \subseteq Z_i$
and an~$M_i$-valuation~$V \subseteq U_i$ such that~$\varphi_i((F \cap B_i) \cup E, V)$
holds. By~$n$-$\er$-fairness of~$X \oplus C$, we have two cases.

In the first case,~$\psi(U_0, U_1)$ is not essential, with some witness threshold~$t \in \omega$
and witness~$M_0 \sqcup M_1$-partition~$\vec{J}$.
By compactness, for every~$(\vec{J}, 2m)$-type~$\vec{T} = \vec{T}^0, \vec{T}^1$,
the~$\Pi^{0, X \oplus C}_1$ class~$\Ccal_{\vec{T}}$ of sets~$Z_0 \oplus Z_1$
such that~$Z_0 \cup Z_1 = \omega$ and for every~$i < 2$
and every finite set~$E \subseteq Z_i$, there is no~$M_i$-valuation~$V > t$ of type~$\vec{T}^i$
such that~$\varphi_i((F \cap B_i) \cup E, V)$ holds is non-empty.
By~$n$-$\er$-fairness preservation of~$\wkl$ (Theorem~\ref{thm:wkl-n-er-fairness}), 
for every~$(\vec{J}, 2m)$-type~$\vec{T}$, there is a 2-partition
$Z^{\vec{T}}_0 \oplus Z^{\vec{T}}_1 \in \Ccal_{\vec{T}}$ such that~
$\bigoplus_{\vec{T}} Z^{\vec{T}}_0 \oplus Z^{\vec{T}}_1 \oplus C$ is~$n$-$\er$-fair for~$A_0, A_1$.
Let~$Z \subseteq X$ be an infinite set~$n$-$\er$-fair for~$A_0, A_1$
such that~$Z \subseteq Z^{\vec{T}}_0$ or~$Z \subseteq Z^{\vec{T}}_1$ for each~$(\vec{J}, 2m)$-type~$\vec{T}$.
Since the~$(\vec{J},2m)$-types are exactly the pairs of all~$(\vec{J}, m)$-types, by the usual pairing argument, 
there is one side~$i < 2$ such that~$(F, Z)$ forces~$\varphi_i(H, U)$ not to be essential.
The condition~$d = (F, Z)$ is an extension forcing~$\Qcal_{\varphi_0, M_0, \varphi_1, M_1}$
by the~$i$th side.

In the second case,~$\psi(V_0, V_1)$ holds for some~$M_0 \sqcup M_1$-valuation~$(V_0, V_1)$
diagonalizing against~$A_0, A_1$. Let~$Z_0 = X \cap B_0$ and~$Z_1 = X \cap B_1$.
By hypothesis, there is some~$i < 2$, some finite set~$E \subseteq Z_i = X \cap B_i$
and some~$M_i$-valuation~$V \subseteq V_i$ such that~$\varphi_i((F \cap B_i) \cup E, V)$ holds.
Since~$V \subseteq V_i$, the~$M_i$-valuation~$V$ diagonalizes against~$A_0, A_1$.
The condition~$d = (F \cup E, X \setminus [0, max(E)])$ is an extension
forcing~$\Qcal_{\varphi_0, M_0, \varphi_1, M_1}$ by the~$i$th side.
\end{proof}

Let~$\Fcal = \{c_0, c_1, \dots \}$ be a sufficiently generic filter, where~$c_s = (F_s, X_s)$, and let~$G = \bigcup_s F_s$.
By Lemma~\ref{lem:rt12-er-fairness-progress},~$G \cap B_0$ and~$G \cap B_1$ are both infinite.
By Lemma~\ref{lem:rt12-er-fairness-forcing}, one of~$G \cap B_0$ and~$G \cap B_1$ is~$\er$-fair for~$A_0, A_1$.
This completes the proof of Theorem~\ref{thm:rt12-strong-er-fairness}.
\end{proof}

\begin{theorem}\label{thm:rt22-er-fairness-preservation}
$\rt^2_2$ admits~$\er$-fairness preservation.
\end{theorem}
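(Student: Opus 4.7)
The plan is to mimic the classical Cholak--Jockusch--Slaman style decomposition of $\rt^2_2$ into $\coh$ plus a (non-effective) instance of $\rt^1_2$, and then compose the preservation theorems already established in this section. Specifically, the combinatorial reason $\er$-fairness is the right notion (as opposed to $n$-$\er$-fairness) is precisely that $\rt^1_2$ preservation increases the index $n$ by one, while we have no a priori bound on how many times we will need to apply it; here, however, exactly one application of $\rt^1_2$ is needed after $\coh$, so the argument is quite clean.

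Fix sets $A_0, A_1 \subseteq \Qb$, let $C$ be $\er$-fair for $A_0, A_1$ so that $C$ is $n$-$\er$-fair for $A_0, A_1$ for some $n \ge 0$, and let $f : [\omega]^2 \to 2$ be a $C$-computable instance of $\rt^2_2$. First, form the uniformly $C$-computable sequence $\vec{R} = (R_i)_{i \in \omega}$ with $R_i = \{\, j > i : f(i,j) = 1\,\}$, and apply Theorem~\ref{thm:coh-n-er-fairness} to produce an $\vec{R}$-cohesive set $H$ such that $H \oplus C$ is $n$-$\er$-fair for $A_0, A_1$, and hence still $\er$-fair. By $\vec{R}$-cohesiveness, for every $x \in H$ the limit $\tilde{f}(x) = \lim_{y \in H,\, y \to \infty} f(x,y)$ exists, i.e.\ $f$ is stable on $[H]^2$.

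Next, the function $\tilde{f}$ determines a $2$-partition $B_0 \cup B_1 = H$ (extend to $\omega$ by throwing $\overline{H}$ into $B_0$) which is $\Delta^{0, H \oplus C}_2$ and therefore in general not $H \oplus C$-computable. This is precisely where the strong form of preservation is used: apply Theorem~\ref{thm:rt12-strong-er-fairness} to the partition $(B_0, B_1)$ relative to $H \oplus C$. We obtain an infinite set $G$ contained in some $B_i \cap H$ such that $G \oplus H \oplus C$ is $\er$-fair for $A_0, A_1$.

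Finally, from $G$ and $H$ we extract an actual $f$-homogeneous set: enumerate $G = \{x_0 < x_1 < \dots\}$ and build $G^* = \{y_0 < y_1 < \dots\} \subseteq G$ by setting $y_0 = x_0$ and, given $y_0, \dots, y_k$, letting $y_{k+1}$ be the least $x \in G$ with $x > y_k$ and $f(y_j, x) = i$ for all $j \le k$ (such $x$ exists by stability of $f$ on $H$). Then $G^*$ is infinite, $f$-homogeneous with color $i$, and $G^* \leq_T G \oplus H$, so $G^* \oplus C \leq_T G \oplus H \oplus C$ is $\er$-fair for $A_0, A_1$, as required. No step presents a genuine obstacle; the only nontrivial point is the use of \emph{strong} preservation in Step 3, which is needed because the $\rt^1_2$ instance arising from the stable coloring is only $\Delta^0_2$ in $H \oplus C$ rather than computable in it.
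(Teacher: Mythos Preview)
Your approach is essentially identical to the paper's: apply $\er$-fairness preservation for $\coh$ to get a cohesive set, form the limit coloring, apply strong $\er$-fairness preservation for $\rt^1_2$, then thin to an $f$-homogeneous set.

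There is one small slip worth fixing. After extending the partition of $H$ to a partition of $\omega$ by placing $\overline{H}$ into $B_0$, the $\rt^1_2$ solution $G$ is only guaranteed to lie in some $B_i$, not in $B_i\cap H$ as you assert. If $i=0$ the solution could lie entirely inside $\overline{H}$, and then your thinning step fails: the justification ``such $x$ exists by stability of $f$ on $H$'' needs $y_j\in H$ and $x\in H$, neither of which you have. The standard repair is to pull the partition back along an $H$-computable bijection $\omega\to H$ rather than padding with $\overline{H}$; the resulting $\rt^1_2$ solution then pushes forward to an infinite subset of $H$, and $\er$-fairness is preserved since the push-forward is computable from the solution together with $H$. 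The paper's own proof is equally informal on this point (it defines $\tilde f$ on all of $\omega$ and asserts without comment that the homogeneous set for $\tilde f$ yields one for $f$), so this is a routine shortcut rather than a conceptual gap.
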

\begin{proof}
Fix any set~$C$~$\er$-fair for some sets~$A_0, A_1 \subseteq \Qb$ and any~$C$-computable
coloring~$f : [\omega]^2 \to 2$.
Consider the uniformly~$C$-computable sequence of sets~$\vec{R}$ defined for each~$x \in \omega$ by
\[
R_x = \{s \in \omega : f(x,s) = 1\}
\]
As~$\coh$ admits~$\er$-fairness preservation, there is
some~$\vec{R}$-cohesive set~$G$ such that~$G \oplus C$ is~$\er$-fair for~$A_0, A_1$.
The set~$G$ induces a~$(G \oplus C)'$-computable coloring~$\tilde{f} : \omega \to 2$ defined by:
\[
(\forall x \in \omega) \tilde{f}(x) = \lim_{s \in G} f(x,s)
\]
As~$\rt^1_2$ admits strong~$\er$-fairness preservation,
there is an infinite~$\tilde{f}$-homogeneous set~$H$ such that
$H \oplus G \oplus C$ is~$\er$-fair for~$A_0, A_1$.
The set~$H \oplus G \oplus C$ computes an infinite~$f$-homogeneous set.
\end{proof}

We are now ready to prove Theorem~\ref{thm:rt22-wkl-erp}.

\begin{proof}[Proof of Theorem~\ref{thm:rt22-wkl-erp}]
By Theorem~\ref{thm:rt22-er-fairness-preservation}
and Corollary~\ref{cor:wkl-er-fairness-preservation},~$\rt^2_2$ and~$\wkl$ admit~$\er$-fairness preservation.
By Theorem~\ref{thm:er22-not-er-fairness},~$\erp$ does not admit~$\er$-fairness preservation.
We conclude by Lemma~3.4.2 in~\cite{Patey2016reverse}.
\end{proof}

\section{Questions}
We conclude by listing several questions concerning the above principles left open by our work. The first, and in some sense most pressing, asks to clarify the precise relationship between $\TT^2_2$ and $\erp$. This also appears as Question 5.2 in \cite{FP-TA}.

\begin{question}
	Over $\RCA_0$, does $\TT^2_2$ imply $\erp$ or conversely?	
\end{question}

\noindent In a similar spirit, we can also ask what the relationships of these principles to some other well-known subsystems. First, we can ask whether an analogue of our main theorem holds for $\erp$ in place of $\TT^2_2$.

\begin{question}
	Over $\RCA_0$, does $\erp$ imply $\ACA_0$?	
\end{question}

\noindent Also, as we have shown that $\TT^2_2$ does not imply $\ACA_0$, the following question is a natural follow-up.

\begin{question}
	Over $\RCA_0$, does $\TT^2_2$ imply $\WKL_0$?	
\end{question}

\noindent We conjecture the answer to be no.

Another question concerns the relationship between the stable and cohesive forms of $\TT^2_2$.

\begin{question}\
	\begin{enumerate}
		\item Does $\STT^2_2$ imply $\CTT^2_2$ over $\RCA_0$, or at least over $\omega$ models?
		\item Does $\STT^2_2$ imply $\COH$ over $\RCA_0$, or at least over $\omega$ models?
	\end{enumerate}
\end{question}

\noindent By itself, these questions are technical and not particularly natural. But even partial results here would likely shed light on the corresponding question for linear orders, i.e., the question of whether $\SRT^2_2$ implies $\COH$ in $\omega$-models. The latter is a longstanding and major open problem. (See, e.g., Dzhafarov et al. \cite[Section 1]{DPSW-TA} for a discussion.) Of course, we again conjecture the answer to be no, to both questions. More generally, it would be interesting and potentially insightful to see which computability-theoretic and reverse mathematical questions surrounding $\SRT^2_2$ are simpler to answer for $\STT^2_2$.


\bibliographystyle{plain}
\bibliography{Papers}

\end{document}